\newtheorem{theorem}{Theorem}[section]
\newtheorem{lemma}[theorem]{Lemma}
\newtheorem{corollary}[theorem]{Corollary}
\theoremstyle{definition}
\newtheorem{definition}[theorem]{Definition}
\newtheorem{example}[theorem]{Example}
\theoremstyle{remark}
\newtheorem{remark}[theorem]{Remark}
\numberwithin{equation}{section}
\begin{document}

\title[Difference Nevanlinna theories with vanishing and infinite periods] {Difference Nevanlinna theories with vanishing and infinite periods}
\author{Yik-Man Chiang}
\address{Department of Mathematics, Hong Kong University of Science and
Technology, Clear Water Bay, Kowloon, Hong Kong.}
\email{machiang@ust.hk}
\thanks{3rd March 2017}

\author{Xu-Dan Luo}
\address{Department of Mathematics, Hong Kong University of Science and
Technology, Clear Water Bay, Kowloon, Hong Kong.}
\curraddr{Department of Applied Mathematics, University of Colorado at Boulder, Boulder, Colorado 80309, USA.}
\email{lxdmathematics@gmail.com}
\thanks{This research was supported in part by the Research Grants Council
of the Hong Kong Special Administrative Region, China (16306315)}

\subjclass[2010]{Primary 30D35; 30D15; 39A05; 39A70
}

\dedicatory{Dedicated to the memory of J. Milne Anderson.}

\keywords{Nevanlinna theory, {varying-steps} difference operator with vanishing and infinite periods, Difference equations}

\begin{abstract}
By extending the idea of a difference operator with a fixed step to a varying-steps difference operator, we
  have established a difference Nevanlinna theory for meromorphic functions with the steps tending to zero (vanishing period) and a difference Nevanlinna theory for finite order meromorphic functions with the steps tending to infinity (infinite period) in this paper. We can recover the classical little Picard theorem from the vanishing period theory, but we require additional finite order growth restriction for meromorphic functions from the infinite period theory.
 Then we give some applications of our theories to exhibit connections between discrete equations and and their continuous analogues.
\end{abstract}

\maketitle



\section{Introduction}

Halburd and Korhonen \cite{Halburd 3} established a new Picard-type theorem and Picard-values with respect to difference operator $\Delta f(z)=f(z+1)-f(z)$ for finite-order meromorphic functions defined on $\mathbb{C}$ versus the classical Picard theorem and Picard values. More specifically, their theory allows them to show that if there are three $a_j-$points $j=1,\, 2,\, 3$ in $\hat{\mathbb{C}}$  such that each pre-image $f^{-1}(a_j)$
is an infinite sequence consisting of points lying on a straight line on which any two consecutive points differ by a fixed difference $c$ (but is otherwise arbitrary), then the function must be a periodic function with period $c$. This result can be considered as a discrete version of the  classical little Picard theorem for finite order meromorphic functions.
A crucial tool of their theory follows from their difference-type Nevanlinna theory for finite-order meromorphic functions,  is the difference logarithmic derivative lemma (see also \cite{Chiang 1}), i.e.,
\begin{equation*}
m\left(r, \frac{f(z+c)}{f(z)}\right)=o(T(r,f)),
\end{equation*}
where $c$ is a fixed nonzero constant. Instead of a fixed $c$, we define $g(z,c):=f(z+c)$, where $(z,c)\in \mathbb{C}^{2}$. Then $f(z+c)$ is a meromorphic function in $\mathbb{C}^{2}$. Moreover, a difference operator with \textit{varying steps} is defined by
\begin{equation*}
\label{E:steps}
\begin{split}
\Delta f_{c}&:=f(z+c)-f(z)\\
&=g(z,c)-g(z,0), \ \ \ (z,c)\in\mathbb{C}^{2}.
\end{split}
\end{equation*}

The first author of the current paper and Ruijsenaars \cite{Chiang-Ruijsenaars_2006} showed that for a nonzero meromorphic function $f(z)$ and $c\in \mathbb{C}$,
\begin{equation*}
m(r,f(z+c))<\frac{R+2r}{R-2r}m(R,f)+\sum_{l=0}^{L}\frac{1}{2\pi}\int_{0}^{2\pi}\log \left|\frac{R^{2}-\overline{b}_{l}(re^{i\phi}+c)}{R(re^{i\phi}+c-b_{l})}\right|d\phi,
\end{equation*}
where $|c|<r$ and $b_{0}, \cdots, b_{L}$ are poles of $f(z)$ in $|z|<R$, which implies an uniform bound as follows:
\begin{equation*}
m(r,f(z+c))\leq 5m(3r,f)+\log 4\cdot n(3r, f)
\end{equation*}
whenever $|c|<r$.

This uniform bound still hold if we restrict $c$, for examples,  such that $0<|c|<\frac{1}{r}$ or $\sqrt{r}<|c|< r$ when $r>1$, which will lead to a vanishing steps and an infinite steps when $r$ is sufficiently large, i.e., $0<|c|<\frac{1}{r}$ and $\sqrt{r}<|c|< r$ will result in $c\rightarrow 0$ and $c\rightarrow\infty$ respectively when $r\rightarrow\infty$. It motivates us to establish the corresponding difference Nevanlinna theories.

We consider the cases with \textit{vanishing period} and \textit{infinite period}, that is, when $c\to 0$ and $c\to\infty$ respectively.
On the one hand, if we denote  $c=\eta$ when it tends to zero via a sequence $\eta_n\to 0$, then the period guaranteed by Halburd-Korhonen's theory for each $n$ would tend to zero in a formal manner. Thus the periodic function with a vanishing period, when suitably defined, would \textit{formally} reduce to a  constant. On the other hand, if we denote  $c=\omega$ when it tends to infinity via  a sequence $\omega_n\to\infty$, then similarly the period as asserted by Halburd-Korhonen's theory for each $n$ would become infinite, and the distance between any two consecutive points on each of the three pre-image infinite sequences  would become sparse and \textit{eventually} reduce to a single point at most in the limit formally. In both cases that have been described, one would \textit{formally} recover the original little Picard theorem (namely the inverse images of each of three Picard values, must be a finite set at most).

In this paper, we {rigorously} establish {that} the above formal considerations indeed hold under certain senses. The upshot is that we can recover the classical little  Picard theorem as $\eta\to 0$ \textit{without} the finite-order restriction and \textit{with} the finite-order restriction when $\omega\to\infty$. In fact, our argument for our vanishing period results is \textit{independent} of Halburd-Korhonen's theory, while we apply methods similar to our earlier works \cite{Chiang 1, Chiang 2} and Halburd-Korhonen's theory \cite{Halburd 3, Chen_2014} in the infinite period results. We remark that the above finite order restriction in the infinite period case is necessary as it is unlikely that such results would hold for general meromorphic functions. However, the rates at which $\eta\to 0$ and $\omega\to \infty$ in the vanishing periods and infinite periods consideration respectively, generally depend on the growth of $f$.
\medskip

\smallskip
Hitherto we shall use the notation $\eta$ for $c$ when we consider the vanishing period case, and use the notation $\omega$ for $c$ when we consider the infinite period case. Thus when the rates at which  $\eta\to 0$ and $\omega\to \infty$ are suitably chosen, and \textit{Picard exceptional values} suitably defined respectively, we have obtained:
	\begin{enumerate}
		\item when a meromorphic function $f$ has three Picard exceptional values with respect to a \textit{varying-steps difference operator with vanishing period}, then $f$ is a constant (Theorem \ref{T:vanishing-picard});
		\item when a finite-order meromorphic function $f$ with three Picard exceptional values with respect to a \textit{varying-steps difference operator of infinite period}, then $f$ is a constant (Theorem \ref{T:infinite-picard}).
	\end{enumerate}
\medskip

\noindent The case (1) above gives an alternative proof of the original little Picard theorem. The case (2) requires additional finite-order restriction.
\bigskip

Let $f(z)$ be a meromorphic function, $\eta\in\mathbb{C}$ be a variable, for each fixed $r:=|z|$.
We introduce the symbols $m_{\eta}(r, f(z+\eta))$, $N_{\eta}(r, f(z+\eta))$ and $T_{\eta}(r,f(z+\eta))$ instead of $m(r, f(z+\eta))$, $N(r, f(z+\eta))$ and $T(r,f(z+\eta))$ when we want to emphasis that they are also functions of $\eta$. But we still have $m_\eta(r,\, f(z+\eta))=m(r,\, f(z+\eta))$, $N_\eta(r,\, f(z+\eta))=N(r,\, f(z+\eta))$, etc.
Our main estimates are as follows.

Let $f$ be an arbitrary meromorphic function and $0<|\eta|< \alpha_{1}(r)$, where $r=|z|$,
	\begin{equation*}
			\alpha_{1}(r)=\min\big\{\log^{-\frac{1}{2}}r,1/\left(n(r+1)\right)^{2}\big\},\quad  n(r)=n(r,\,f)+n\left(r,1/f\right).
	\end{equation*}
Then we obtain \textit{for each fixed $r$,}
	\begin{equation*}
		m_{\eta}\Big(r,\, \frac{f(z+\eta)}{f(z)}\Big)=o(1),
	\end{equation*}
\smallskip
as $\eta\rightarrow 0$. If, in addition, that $f$ has no pole in $\overline{D}(0,h)\setminus \{0\}$ for some positive $h$ and $0< |\eta|< \alpha_{2}(r)$, where
	\begin{equation*}
		\alpha_{2}(r)=\min \Big\{r, \log^{-\frac12}r, h/2,\frac{1}{\sum_{0<|b_{\mu}|<r+\frac{1}{2}}{1}/{|b_{\mu}|}}\Big\},
	\end{equation*}
here $(b_{\mu})_{\mu\in N}$ is the sequence of poles of $f(z)$, then for each fixed $r$
	\begin{equation*}
		 N_{\eta}\big(r,f(z+\eta)\big)=N\big(r,\,f(z)\big)+\varepsilon_{1}(r),
	\end{equation*}
where $|\varepsilon_{1}(r)|\leq n\left(0,f(z)\right)\log r+3$.

Although the above results hold without the finite-order restriction, the upper bounds of $|\eta|$, i.e., $\alpha_{1}(r)$ and $\alpha_{2}(r)$, which tend to zero as $r\to\infty$, are related to the growth of $f$.

When $f$ has positive finite order $\sigma$ and $\omega$ is suitably restricted by $0<|\omega|<r^\beta$, $0<~\beta<~1$,
\ then we have
	\begin{equation*}
		m\Big(r,\, \frac{f(z+\omega)}{f(z)}\Big)=O(r^{\sigma-(1-\beta){(1-\varepsilon)}+\varepsilon}),
	\end{equation*}
 and
{	\begin{equation*}
		N\big(r,\, f(z+\omega)\big)= N\big(r,\, f(z)\big)+O(r^{\sigma-(1-\beta)+\varepsilon})
	\end{equation*}
when $\sigma\geq 1$,
\begin{equation*}
		N\big(r,\, f(z+\omega)\big)= N\big(r,\, f(z)\big)+O(r^{\beta})
	\end{equation*}
when $0<\sigma< 1$}
\smallskip
for all $r$ outside a set of finite logarithmic measure. We have also obtained corresponding estimates for meromorphic functions with finite logarithmic order.

Finally, we show a different kind of vanishing period result for finite order meromorphic function:
	\begin{equation*}
		\lim\limits_{r\rightarrow
\infty}\lim\limits_{\eta\rightarrow
0}m_{\eta}\Big(r,\frac{1}{\eta}\Big(\frac{f(z+\eta)}{f(z)}-1\Big)\Big)=O(\log r)
\end{equation*}
\smallskip

\noindent thus recovering Nevanlinna's original logarithmic derivative estimate for finite order functions via yet another approach independent of previous methods although we do not have an immediate application of this result.
\medskip

This paper is organised as follows. We state the main theorems in \S\ref{S:main} and \S\ref{S:main'}. We shall establish Nevanlinna theory for difference operator in terms of vanishing and infinite periods in \S\ref{S:Nevanlinna theory-1} and \S\ref{S:Nevanlinna theory-2} respectively. We recall some known results in \S\ref{S:preliminaries}. The proofs of main results are given in \S 7 to \S 12. We exhibit some applications of our results to obtain classical differential equation results from their difference counterparts in \S \ref{S:applications}. A re-formulation of logarithmic derivative lemma and its proof are given in \S \ref{S:reformulation}. We shall use Nevanlinna's notation freely throughout this paper. See \cite{Hayman, Yang} for their meanings.

\bigskip

\section{Main results for vanishing period}\label{S:main}
In this section, our main results are for fixed $r:=|z|$. In this sense, $m\big(r, \frac{f(z+\eta)}{f(z)}\big)$, $N(r, f(z+\eta))$ and $T(r, f(z+\eta))$ are functions of $\eta$.  We sometimes write $m_{\eta}\big(r, \frac{f(z+\eta)}{f(z)}\big)$, $N_{\eta}(r, f(z+\eta))$ and $T_{\eta}(r, f(z+\eta))$ when we want to emphasize the dependence on $\eta$.
\medskip

\begin{theorem}
\label{T:limit-discrete-quotient}
Let $f(z)$ be a meromorphic function in $\mathbb{C}$ and $r=|z|$ be fixed. We have
\begin{equation}\label{E:proximty-limit-00}
							\lim\limits_{\eta\rightarrow 0}m_{\eta}\Big(r,\frac{f(z+\eta)}{f(z)}\Big)+\lim\limits_{\eta\rightarrow 0}m_{\eta}\Big(r,\frac{f(z)}{f(z+\eta)}\Big)=0.
						\end{equation}
Moreover, if we further assume $0<|\eta|< \alpha_{1}(r)$, where
	\begin{equation}
	    \label{E:alpha_1}
	    \alpha_{1}(r)=\min\big\{\log^{-\frac{1}{2}}r,1/\left(n(r+1)\right)^{2}\big\},\quad  n(r)=n(r,\,f)+n\left(r,1/f\right).
	\end{equation}
Then
\begin{equation}\label{E:proximty-limit-0}
		\lim\limits_{r\rightarrow \infty}m_{\eta}\Big(r,\,\frac{f(z+\eta)}{f(z)}\Big)+
		\lim\limits_{r\rightarrow
							\infty}m_{\eta}\Big(r,\frac{f(z)}{f(z+\eta)}\Big)=0.
	\end{equation}
\end{theorem}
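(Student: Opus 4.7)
The plan is to derive a pointwise bound on $\log\big|f(z+\eta)/f(z)\big|$ from the Poisson--Jensen formula and then integrate over $|z|=r$. Applying Poisson--Jensen to $\log|f|$ on a disc $|z|<R$ with $R>r$ (chosen so that $f$ has no zeros or poles on $|z|=R$) and subtracting the representations at $z+\eta$ and $z$ produces the exact identity
\begin{equation*}
\log\Big|\frac{f(z+\eta)}{f(z)}\Big| = P_R(z,\eta) + \sum_{|a_\mu|<R}\Big[\log\Big|1+\frac{\eta}{z-a_\mu}\Big| - \log\Big|1-\frac{\bar a_\mu \eta}{R^2-\bar a_\mu z}\Big|\Big] - \sum_{|b_\nu|<R}\Big[\log\Big|1+\frac{\eta}{z-b_\nu}\Big| - \log\Big|1-\frac{\bar b_\nu \eta}{R^2-\bar b_\nu z}\Big|\Big],
\end{equation*}
where $P_R(z,\eta)$ is the Poisson-kernel difference and the $a_\mu$, $b_\nu$ are the zeros and poles of $f$ in $|z|<R$. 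On $|z|=r$ both $P_R(z,\eta)$ and each ``Schwarz-reflected'' factor $\log|1-\bar c\eta/(R^2-\bar c z)|$ are uniformly $O(|\eta|)$ with constants depending only on $R-r$, $R$, and the Nevanlinna data of $f$ in $|z|\le R$; only the ``local'' factors $\log|1+\eta/(z-c)|$ can be singular.

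For the pointwise identity \eqref{E:proximty-limit-00} I would fix any admissible $R>r$, so that only finitely many $a_\mu,b_\nu$ appear. Each local factor is controlled by the elementary arc-splitting estimate
\begin{equation*}
\frac{1}{2\pi}\int_0^{2\pi}\log^+\frac{|\eta|}{|re^{i\theta}-c|}\,d\theta = O\big(|\eta|/r\big),\qquad c\in\mathbb{C},
\end{equation*}
which follows from the chord-length bound $|re^{i\theta}-c|\ge 2r|\sin((\theta-\theta_c)/2)|$ applied on the short arc where the integrand is positive. Summing over the finitely many singularities in $|z|<R$ and adding the $O(|\eta|)$ Poisson/Schwarz contribution yields $m_{\eta}(r,f(z+\eta)/f(z))\to 0$ as $\eta\to 0$, and the companion bound $m_{\eta}(r,f(z)/f(z+\eta))\to 0$ follows by replacing $f$ with $1/f$ throughout (zeros and poles swap roles).

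For the uniform limit \eqref{E:proximty-limit-0} the same decomposition is used with $R$ proportional to $r$ (say $R=2r$), giving a quantitative $r$-dependence. The two local-factor sums together contain $O(n(2r))$ terms and contribute $O\big(|\eta|\,n(2r)/r\big)$ to $m_{\eta}(r,f(z+\eta)/f(z))$, while the Poisson/Schwarz contribution becomes $O\big(|\eta|\,T(2r,f)/r\big)$. The two ingredients of $\alpha_{1}(r)=\min\{\log^{-1/2}r,\,1/n(r+1)^2\}$ are designed precisely to defeat these two contributions as $r\to\infty$, using the standard comparisons between $n$, $N$ and $T$.

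The main technical obstacle will be zeros and poles of $f$ that cluster near the circle $|z|=r$, because any such singularity makes $\log^+|\eta/(z-c)|$ large on a thin arc. The arc-splitting estimate displayed above is exactly what absorbs these contributions, but summing it over all singularities in $|z|<2r$ is what ties the admissible range of $|\eta|$ to the counting function $n(r+1)$, and thereby forces the $1/n(r+1)^2$ factor appearing in the definition of $\alpha_{1}(r)$.
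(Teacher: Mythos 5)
Your route through the Poisson--Jensen formula is a genuinely different decomposition from the one in the paper.  The paper strips the zeros and poles of $f$ by forming
\[
F(z)=f(z)\,\frac{\prod_{v}(z-b_{v})}{\prod_{u}(z-a_{u})},
\]
with $a_{u},b_{v}$ the zeros and poles of $f$ in $D(0,r+1)$, and then uses the purely elementary two--point inequality of Lemma~\ref{L:7} together with the integral estimate of Lemma~\ref{L:3} to obtain
\[
m_{\eta}\Big(r,\frac{f(z+\eta)}{f(z)}\Big)+m_{\eta}\Big(r,\frac{f(z)}{f(z+\eta)}\Big)
\le m_{\eta}\Big(r,\frac{F(z+\eta)}{F(z)}\Big)+m_{\eta}\Big(r,\frac{F(z)}{F(z+\eta)}\Big)
+\frac{2C_{\alpha}|\eta|^{\alpha}}{(1-\alpha)r^{\alpha}}\,n(r+1),
\]
then handles the $F$--terms by uniform continuity on $|z|=r$.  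The crucial structural feature is that this residual term involves \emph{only} the counting function $n(r+1)$ and no characteristic $T(R,f)$; that is exactly what makes the cutoff $\alpha_{1}(r)=\min\{\log^{-1/2}r,\,1/n(r+1)^{2}\}$ do its job.

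Your Poisson--Jensen plan loses that feature, and this creates a genuine gap in your treatment of \eqref{E:proximty-limit-0}.  The Poisson-kernel difference $P_{R}(z,\eta)$ in your identity is bounded by something of the order $\dfrac{2R}{(R-r)^{2}}\big(m(R,f)+m(R,1/f)\big)\,|\eta|$, so with $R=2r$ you incur a term of size $O\big(|\eta|\,T(2r,f)/r\big)$.  Neither ingredient of $\alpha_{1}(r)$ sees $T(2r,f)$: if $f$ grows quickly but has few zeros and poles up to $|z|=r+1$, then $\log^{-1/2}r$ and $1/n(r+1)^{2}$ decay slowly while $T(2r,f)/r$ explodes, so $|\eta|<\alpha_{1}(r)$ does not make your Poisson-kernel term go to~$0$.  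Independently, your local-factor sum runs over all singularities in $|z|<2r$ and so contributes $O\big(|\eta|\,n(2r)/r\big)$, but the cutoff is built from $n(r+1)$, and $n(2r)$ can outgrow $n(r+1)$ arbitrarily fast.  Shrinking $R$ to $r+O(1)$ does not help either: the Poisson prefactor $2R/(R-r)^{2}$ becomes $O(r)$ and still carries $T(r+1,f)$.  Eliminating the Poisson-kernel term altogether is precisely what the paper's $F$--factorization accomplishes, and your plan needs an analogous step before the uniform statement can close.

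For the pointwise claim \eqref{E:proximty-limit-00} at a fixed $r$ your argument is sound: with $R$ fixed there are only finitely many singularities, every term in your identity is $O(|\eta|)$ (the arc-splitting bound you state is correct), and both proximity functions tend to $0$ as $\eta\to 0$.  The problem is entirely in the uniform-in-$r$ half.
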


We deduce from the above theorem the following corollary.
\medskip

\begin{corollary}
\label{C:1}
Let $f(z)$ be a meromorphic function, $0<r=|z|$ is a fixed but is otherwise arbitrary. Then
\begin{equation}
\lim\limits_{\eta\rightarrow
0}m_{\eta}\big(r,f(z+\eta)\big)=m\big(r,f(z)\big).
\end{equation}
\end{corollary}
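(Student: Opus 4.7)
The plan is to sandwich $m_\eta\bigl(r,\, f(z+\eta)\bigr)$ between $m\bigl(r,\, f(z)\bigr)$ plus error terms that are controlled by the two proximity quotients appearing in Theorem \ref{T:limit-discrete-quotient}. The two ingredients are entirely elementary: the multiplicative decompositions
\[
f(z+\eta)=\frac{f(z+\eta)}{f(z)}\cdot f(z),\qquad f(z)=\frac{f(z)}{f(z+\eta)}\cdot f(z+\eta),
\]
together with the pointwise inequality $\log^{+}|ab|\le\log^{+}|a|+\log^{+}|b|$.

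Applying $\log^{+}$ and integrating over $|z|=r$, I would first obtain the two-sided bounds
\[
m_{\eta}\bigl(r,\, f(z+\eta)\bigr)\le m\bigl(r,\, f(z)\bigr)+m_{\eta}\!\left(r,\,\frac{f(z+\eta)}{f(z)}\right),
\]
\[
m\bigl(r,\, f(z)\bigr)\le m_{\eta}\bigl(r,\, f(z+\eta)\bigr)+m_{\eta}\!\left(r,\,\frac{f(z)}{f(z+\eta)}\right).
\]
These are valid for every $\eta$ small enough that $f(z+\eta)$ is defined on $|z|=r$; the finitely many isolated logarithmic singularities on the circle produced by poles of $f(z+\eta)$ are integrable and do not affect the estimates.

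Next I would invoke equation \eqref{E:proximty-limit-00} of Theorem \ref{T:limit-discrete-quotient}. Since each proximity function is non-negative and the two limits on the left of \eqref{E:proximty-limit-00} sum to $0$, each limit must be $0$ individually. Taking $\limsup$ in the first inequality above yields $\limsup_{\eta\to 0}m_{\eta}\bigl(r,\,f(z+\eta)\bigr)\le m\bigl(r,\, f(z)\bigr)$, and taking $\liminf$ in the second yields $\liminf_{\eta\to 0}m_{\eta}\bigl(r,\,f(z+\eta)\bigr)\ge m\bigl(r,\, f(z)\bigr)$. Combining these forces the limit to exist and equal $m\bigl(r,\, f(z)\bigr)$.

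There is no real obstacle here; the only point that needs a moment's care is separating the two summands in \eqref{E:proximty-limit-00}, which is immediate from non-negativity. The work has been placed entirely in Theorem \ref{T:limit-discrete-quotient}, and the corollary is a direct consequence of the triangle-type inequalities applied to the multiplicative decomposition.
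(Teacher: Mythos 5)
Your proposal is correct and follows essentially the same route as the paper: both use the multiplicative decomposition $f(z+\eta)=\tfrac{f(z+\eta)}{f(z)}\cdot f(z)$ (and its reciprocal) together with the subadditivity of $\log^{+}$ to trap $m_{\eta}(r,f(z+\eta))-m(r,f(z))$ between $-m_{\eta}\bigl(r,\tfrac{f(z)}{f(z+\eta)}\bigr)$ and $m_{\eta}\bigl(r,\tfrac{f(z+\eta)}{f(z)}\bigr)$, then invoke equation \eqref{E:proximty-limit-00} of Theorem \ref{T:limit-discrete-quotient} to kill both error terms. The only cosmetic difference is that you pass to $\limsup$ and $\liminf$ where the paper writes out the $\varepsilon$--$\delta$ bound $|m_{\eta}(r,f(z+\eta))-m(r,f(z))|<2\varepsilon$.
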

\smallskip

Our next result is about a relation between the counting function and its varying steps.
\smallskip

\begin{theorem}
\label{T:limit-counting}
Let $f(z)$ be a meromorphic function in $\mathbb{C}$, and for each fixed $r=|z|$ such that $0< |\eta|< \alpha_{2}(r)$, where
	\begin{equation}\label{E:alpha_2}
		\alpha_{2}(r)=\min \Big\{r,\log^{-\frac{1}{2}}r, h/2,\frac{1}{\sum_{0<|b_{\mu}|<r+\frac{1}{2}}{1}/{|b_{\mu}|}}\Big\},
	\end{equation}
here $(b_{\mu})_{\mu\in N}$ is the sequence of poles of $f(z)$, and $h \in (0,\, 1)$ such that $f(z)$ {has no poles} in $\overline{D}(0,\, h)\setminus \{0\}$. Then
~\\
\begin{equation}
\label{E:counting-1}
	N_{\eta}\big(r,f(z+\eta)\big)=N\big(r,\,f(z)\big)+\varepsilon_{1}(r),
\end{equation}
~\\
where $|\varepsilon_{1}(r)|\leq n\left(0,f(z)\right)\log r+3$.
\end{theorem}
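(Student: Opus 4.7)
The approach is to work directly with the explicit summation form
\[
N(r,f)=\sum_{0<|b_\mu|<r} m_\mu\log\frac{r}{|b_\mu|}+n(0,f)\log r,
\]
in which the $b_\mu$ are the nonzero poles of $f$ with multiplicities $m_\mu$, and to match this against the analogous expansion for $g(z):=f(z+\eta)$, whose poles are the shifts $b_\mu-\eta$. Because $|\eta|<h/2$ and $f$ has no poles in $\overline{D}(0,h)\setminus\{0\}$, the point $\eta$ is regular for $f$, so $n(0,g)=0$ and
\[
N_\eta(r,f(z+\eta))=\sum_{0<|b_\mu-\eta|<r} m_\mu\log\frac{r}{|b_\mu-\eta|}.
\]
First I would isolate the contribution of a possible pole of $f$ at the origin: it contributes $n(0,f)\log r$ to $N(r,f)$ but reappears in $g$ as a pole at $-\eta$ contributing $n(0,f)\log(r/|\eta|)$, so its net effect on $N_\eta(r,g)-N(r,f)$ is $-n(0,f)\log|\eta|$; this is the dominant source of the error $\varepsilon_1(r)$.

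For every other pole I would use the identity $\log(r/|b_\mu-\eta|)=\log(r/|b_\mu|)+\log(|b_\mu|/|b_\mu-\eta|)$. Since every nonzero pole satisfies $|b_\mu|\ge h>2|\eta|$, the elementary inequality $|\log(1+x)|\le 2|x|$ (valid for $|x|\le 1/2$) yields $|\log(|b_\mu|/|b_\mu-\eta|)|\le 2|\eta|/|b_\mu|$. Summing over poles and invoking the key hypothesis $|\eta|<1/\sum_{0<|b_\mu|<r+1/2}1/|b_\mu|$ caps the bulk perturbation by~$2$. The symmetric difference between the two index sets $\{0<|b_\mu|<r\}$ and $\{0<|b_\mu-\eta|<r\}$ consists of poles in the thin annulus $\bigl||b_\mu|-r\bigr|\le|\eta|$, for which $|\log(r/|b_\mu|)|=O(|\eta|/r)$; a parallel application of the reciprocal-sum hypothesis bounds this boundary contribution by a further absolute constant.

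The main obstacle is the bookkeeping around the origin, which is inherently asymmetric between $f$ and $g$: a pole of $f$ at $0$ is carried by the $n(0,f)\log r$ term of $N(r,f)$, whereas in $N_\eta(r,g)$ the very same pole appears at $-\eta\ne 0$ and is carried by the body of the sum with weight $\log(r/|\eta|)$. Tracking this shift carefully, together with the bulk and boundary controls above, yields a decomposition of the form $\varepsilon_1(r)=-n(0,f)\log|\eta|+O(1)$, and the claimed inequality $|\varepsilon_1(r)|\le n(0,f)\log r+3$ then follows once the upper bounds on $|\eta|$ collected in $\alpha_2(r)$ (notably $|\eta|<\log^{-1/2}r$ and $|\eta|<r$) are used to absorb $|\log|\eta||$ into the $\log r$ factor while the residual perturbation and boundary estimates account for the additive constant $3$.
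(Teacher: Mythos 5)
Your decomposition follows the same structure as the paper's proof, which quotes inequalities (5.1)--(5.4) of Chiang--Feng to obtain in one step what you write out directly: split off the pole at the origin, compare the non-origin terms $\log(r/|b_\mu-\eta|)$ against $\log(r/|b_\mu|)$ and sum using the reciprocal-sum constraint built into $\alpha_2(r)$, and treat the symmetric difference of index sets near $|b_\mu|=r$. Your bulk estimate via $\bigl|\log|b_\mu|/|b_\mu-\eta|\bigr|\le 2|\eta|/|b_\mu|$ and the boundary estimate are both sound and correctly yield an $O(1)$ contribution once $|\eta|\le 1/\sum 1/|b_\mu|$ is used, so the non-origin part of your argument is fine.

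The gap is in the final absorption step. You correctly compute that a pole of $f$ at the origin of multiplicity $n(0,f)$ contributes exactly $-n(0,f)\log|\eta|$ to $N_\eta(r,f(z+\eta))-N(r,f)$, but you then claim that the upper bounds $|\eta|<\log^{-1/2}r$ and $|\eta|<r$ collected in $\alpha_2(r)$ "absorb $|\log|\eta||$ into the $\log r$ factor." This runs in the wrong direction: for $0<|\eta|<1$ we have $|\log|\eta||=-\log|\eta|$, and an upper bound on $|\eta|$ gives only a \emph{lower} bound on $|\log|\eta||$. For instance $|\eta|<\log^{-1/2}r$ forces $|\log|\eta||>(1/2)\log\log r$, and since $\eta$ may be taken arbitrarily small with $r$ fixed, $-n(0,f)\log|\eta|$ is unbounded; nothing in $\alpha_2(r)$ provides the lower bound $|\eta|\gtrsim 1/r$ that would be needed to reach $|\log|\eta||\le \log r + O(1)$. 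As written, your argument therefore proves the claim only in the case $n(0,f)=0$. (For what it is worth, the paper's appeal to Chiang--Feng's estimates, which were derived for a fixed shift $\eta$ with $r\to\infty$ so that $|\eta|>1/r$ is automatic, is exposed to the same sensitivity when transplanted to the regime $\eta\to 0$ with $r$ fixed; your write-up makes this explicit, but the absorption you propose does not close it.)
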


Combining the above asymptotic relations, we obtain the following estimate for the Nevanlinna characteristic function.

\begin{theorem}
\label{T:limit-characteristic}
Let $f(z)$ be a meromorphic function in $\mathbb{C}$. Then for each fixed $r=|z|$, there exists $\beta(r)>0$  with $\lim\limits_{r\rightarrow\infty}\beta(r)=0$ such that
~\\
\begin{equation}\label{E:limit-characteristic}
T_{\eta}\big(r,\, f(z+\eta)\big)= T\big(r,\,f(z)\big)+\varepsilon(r)
\end{equation}
~\\
whenever $0<|\eta|<\beta(r)$, where $|\varepsilon(r)|\leq n\left(0,f(z)\right)\log r+4$.
\end{theorem}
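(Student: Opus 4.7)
The plan is to combine Corollary \ref{C:1} with Theorem \ref{T:limit-counting} via the Nevanlinna decomposition $T=m+N$. First, I split
\begin{equation*}
T_{\eta}\bigl(r,\,f(z+\eta)\bigr) - T\bigl(r,\,f(z)\bigr)
=\bigl[m_{\eta}\bigl(r,\,f(z+\eta)\bigr)-m\bigl(r,\,f(z)\bigr)\bigr]
+\bigl[N_{\eta}\bigl(r,\,f(z+\eta)\bigr)-N\bigl(r,\,f(z)\bigr)\bigr],
\end{equation*}
and bound the two brackets separately, producing two ``admissible'' thresholds for $|\eta|$ whose minimum will be taken as $\beta(r)$.

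For the counting-function bracket, Theorem \ref{T:limit-counting} applies directly: provided $0<|\eta|<\alpha_{2}(r)$, where $\alpha_{2}(r)$ is defined by \eqref{E:alpha_2}, we have $N_{\eta}(r,f(z+\eta))=N(r,f(z))+\varepsilon_{1}(r)$ with $|\varepsilon_{1}(r)|\le n(0,f(z))\log r+3$. For the proximity-function bracket, Corollary \ref{C:1} states that $\lim_{\eta\to 0}m_{\eta}(r,f(z+\eta))=m(r,f(z))$ for each fixed $r$, so by the definition of a limit there exists $\beta_{0}(r)>0$ (depending on $r$ and $f$, but that is all that is required) such that
\begin{equation*}
\bigl|m_{\eta}(r,f(z+\eta))-m(r,f(z))\bigr|<1 \qquad\text{whenever } 0<|\eta|<\beta_{0}(r).
\end{equation*}
I then set $\beta(r):=\min\bigl\{\alpha_{2}(r),\,\beta_{0}(r)\bigr\}$; combining the two bounds yields $|\varepsilon(r)|\le n(0,f(z))\log r+3+1=n(0,f(z))\log r+4$, as desired.

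The only remaining point is the asymptotic requirement $\lim_{r\to\infty}\beta(r)=0$. This is immediate from the construction: from \eqref{E:alpha_2} we have $\alpha_{2}(r)\le \log^{-1/2}r\to 0$, and since $\beta(r)\le\alpha_{2}(r)$, we get $\beta(r)\to 0$ regardless of the size of $\beta_{0}(r)$.

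The only mild obstacle is ensuring that Corollary \ref{C:1} really does provide a positive threshold $\beta_{0}(r)$, rather than merely an abstract limit statement; but because the corollary asserts a genuine limit as $\eta\to 0$ for each fixed $r$, the existence of $\beta_{0}(r)$ follows at once from the $\varepsilon$-$\delta$ definition applied with $\varepsilon=1$. No uniformity in $r$ is needed, since the conclusion of the theorem is stated pointwise in $r$. Thus no further machinery beyond Corollary \ref{C:1} and Theorem \ref{T:limit-counting} is required.
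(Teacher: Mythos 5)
Your proof is correct and takes essentially the same route as the paper: decompose $T=m+N$, handle the counting part with Theorem \ref{T:limit-counting} (under $0<|\eta|<\alpha_2(r)$, giving the $n(0,f)\log r+3$ term) and the proximity part via the vanishing-period estimate, then take the minimum of the two thresholds. The paper attacks the proximity term by writing $m_\eta(r,f(z+\eta))\le m(r,f)+m_\eta\big(r,\tfrac{f(z+\eta)}{f(z)}\big)$ and invoking \eqref{E:proximty-limit-00} to make the second summand $\le 1$ for $\eta$ small, whereas you invoke Corollary \ref{C:1} directly to get $|m_\eta(r,f(z+\eta))-m(r,f)|<1$; since Corollary \ref{C:1} is itself derived from precisely that two-sided quotient estimate, the content is the same, and your version is marginally cleaner in that you explicitly name the threshold $\beta_0(r)$ coming from the $\varepsilon$–$\delta$ definition rather than identifying it tacitly with $\alpha_1(r)$ as the paper's Remark does. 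Your final observation that $\beta(r)\le\alpha_2(r)\le\log^{-1/2}r\to 0$ correctly discharges the asymptotic requirement without needing any decay of $\beta_0(r)$.
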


\begin{remark} We may choose $\beta(r)=\min\{\alpha_1(r),\, \alpha_2(r)\}$ defined above.
\end{remark}

In order to understand these asymptotic relations, we will give the following remark.
\begin{remark}
 Miles  showed in \cite{Miles} that the deficiency of meromorphic functions may change when choosing different origin. But Chiang and Feng \cite{Chiang 1} showed, for a fixed $\eta$,  the asymptotic relations
~\\
\begin{equation}\label{E:N_f_eta}
N\big(r,\,f(z+\eta)\big)=N(r,\,f)+O(r^{\lambda-1+\epsilon})+O(\log r)
\end{equation}
and
\begin{equation}\label{E:T_f_eta}
    T\big(r,\, f(z+\eta)\big)=T(r,\,f)+O(r^{\sigma-1+\varepsilon})+O(\log r)
\end{equation}
~\\
for finite order meromorphic function $f(z)$, where $\lambda$ denotes the exponent of convergence of poles of $f(z)$. This implies that the deficiency does not change after shifting the origin if the difference between the order and lower order is less than unity. By applying the estimates (\ref{E:N_f_eta}) and (\ref{E:T_f_eta}), one can easily obtain an alternative proof of an earlier result of Valiron \cite{Valiron} that if a finite order meromorphic function with the difference between its order and lower order is less then unity, then the  deficiency at the origin, i.e.,  $\delta(0)$ is invariant against any finite shift. This result of Valiron no longer hold in general. See Miles \cite{Miles}.
However, Theorems \ref{T:limit-counting} and \ref{T:limit-characteristic} indicate that the deficiency remains the same by allowing the period to tend to zero without any restriction of order.
\end{remark}
\medskip

\section{Main results for infinite period}\label{S:main'}

We distinguish two cases of meromorphic functions, that are, those with finite positive order and those with zero order of growth in this section.  In the former, the varying steps $\omega$  is restricted by  $0<|\omega|<r^\beta$ where the constant $\beta$ depends on the growth order of $f$.  In the latter, we have $0<|\omega| <\log^{\frac{1}{2}} r$. The $\omega$ is otherwise free to vary within the given upper bounds. For example,  the $|\omega|$ can tend to zero or to infinity when $r\rightarrow\infty$. In the case when we choose $\omega$ to be constant, then the results for finite order meromorphic functions would essentially agree with the results in Chiang and Feng   \cite{Chiang 1}.  We shall stick to the standard notations $m\big(r,\,\frac{f(z+\omega)}{f(z)}\big)$, $N(r, f(z+\omega))$ and $T(r, f(z+\omega))$ with the understanding that the $\omega$ is free to vary with respect to an upper bound that may depend on $f$ in this section.

\begin{theorem}
\label{T:limit-discrete-quotient'}
Let $f(z)$ be a meromorphic function of finite order $\sigma$, $0<\beta<1$ and $0<|\omega|<r^\beta$. Then given  {$0<\varepsilon<(1-\beta)/(2-\beta)$}, we have
\smallskip
				\begin{equation}
				 m\Big(r,\,\frac{f(z+\omega)}{f(z)}\Big)+ m\Big(r,\,\frac{f(z)}{f(z+\omega)}\Big)=O\big(r^{\sigma-(1-\beta)(1-\varepsilon)+\varepsilon}\big).
				\end{equation}
\end{theorem}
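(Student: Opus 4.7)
The plan is to adapt the Poisson--Jensen approach developed in \cite{Chiang 1} together with the integral estimate of \cite{Chiang-Ruijsenaars_2006}, but with a careful selection of the auxiliary radius $R$ that accommodates $|\omega|$ growing like $r^\beta$ rather than being bounded. Fix $z$ with $|z|=r$ and apply the Poisson--Jensen formula to $f$ on $|w|\le R$, where $R>r+|\omega|$ will be chosen later. Taking the difference of the representations at $z+\omega$ and $z$ gives
\[
\log\left|\frac{f(z+\omega)}{f(z)}\right| \;=\; I(z,\omega,R) \;+\; S_a(z,\omega,R) \;-\; S_b(z,\omega,R),
\]
where $I$ is the boundary integral against the Poisson-kernel difference
\[
P(z,\omega,Re^{i\theta}) \;:=\; \mathrm{Re}\,\frac{Re^{i\theta}+(z+\omega)}{Re^{i\theta}-(z+\omega)} \;-\; \mathrm{Re}\,\frac{Re^{i\theta}+z}{Re^{i\theta}-z},
\]
and $S_a$, $S_b$ are the usual finite sums over zeros $a_\mu$ and poles $b_\nu$ of $f$ in $|w|\le R$ of the Blaschke-type factors $\log|R(w-a_\mu)/(R^2-\overline{a}_\mu w)|$ evaluated at $w=z+\omega$ minus the same at $w=z$.

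Next I would estimate each piece. A direct computation yields $|P(z,\omega,Re^{i\theta})|=O\bigl(R|\omega|/(R-r-|\omega|)^2\bigr)$ uniformly in $\theta$, so integrating in $\theta$ and then in $\arg z$ produces
\[
m\bigl(r,\,I(\cdot,\omega,R)\bigr) \;=\; O\!\left(\frac{R\,|\omega|}{(R-r-|\omega|)^2}\, T(R,f)\right).
\]
For $S_a$ and $S_b$, I would reuse the standard Nevanlinna averaging argument already invoked in the derivation of the Chiang--Ruijsenaars bound quoted in the introduction: the ``numerator'' terms $\log|z-a_\mu|$ and $\log|z+\omega-a_\mu|$ contribute, after integration in $\arg z$, at most $O\bigl(N(R+|\omega|,f)+N(R+|\omega|,1/f)\bigr)$, while the ``reflected'' factors $\log|R^2-\overline{a}_\mu(z+\omega)|/R$ contribute $O(\log R)\cdot\bigl(n(R,f)+n(R,1/f)\bigr)$.

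With those estimates in hand I would set $R=r+r^\gamma$ and look for an admissible $\gamma\in(\beta,1)$. Since $|\omega|<r^\beta<r^\gamma$, the kernel denominator is of order $r^{2\gamma}$, so the kernel contribution becomes $r^{\,1+\beta-2\gamma}\,T(R,f)$. Using the finite-order hypothesis and standard Borel-type results, I have $T(R,f)=O(R^{\sigma+\varepsilon'})=O(r^{\sigma+\varepsilon'})$ outside an $r$-set of finite logarithmic measure, and likewise $n(R,f)+n(R,1/f)=O(r^{\sigma-1+\varepsilon'})$; the resulting contribution from $S_a-S_b$ is dominated by the main term. Imposing
\[
1+\beta-2\gamma+\sigma+\varepsilon' \;\le\; \sigma-(1-\beta)(1-\varepsilon)+\varepsilon
\]
reduces to $\gamma\ge 1-(2-\beta)\varepsilon/2+\varepsilon'/2$. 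I would pick $\gamma$ equal to this lower bound, and the symmetric estimate for $m(r,f(z)/f(z+\omega))$ follows by running the identical argument with $f$ replaced by $1/f$.

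The delicate point is the simultaneous feasibility of the two constraints on $\gamma$: it must exceed $\beta$ in order for $R-r$ to dominate $|\omega|$, and it must satisfy the arithmetic inequality above arising from the kernel/order trade-off. The hypothesis $\varepsilon<(1-\beta)/(2-\beta)$ is exactly what makes these compatible, since it rearranges to $1-(2-\beta)\varepsilon/2>(1+\beta)/2>\beta$; shrinking $\varepsilon'$ relative to $\varepsilon$ then secures $\gamma>\beta$. The main bookkeeping obstacle is keeping the three small parameters $\beta$, $\varepsilon$, $\varepsilon'$ synchronised so that all error terms (Poisson-kernel, numerator sums, reflected factor $O(\log R)n(R,f)$, and the exceptional set from the finite-order bound) are absorbed into the declared exponent $\sigma-(1-\beta)(1-\varepsilon)+\varepsilon$.
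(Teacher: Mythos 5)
Your decomposition via Poisson--Jensen and your treatment of the kernel term are on the right track, but the estimate for the Blaschke-factor differences $S_a-S_b$ is too crude and would break the proof. You bound the ``numerator'' contributions of the zeros/poles at $z+\omega$ and at $z$ \emph{separately}, obtaining $O\bigl(N(R+|\omega|,f)+N(R+|\omega|,1/f)\bigr)=O(r^{\sigma+\varepsilon'})$, and the reflected factors by $O(\log R)\cdot n(R,\cdot)$. But the target exponent $\sigma-(1-\beta)(1-\varepsilon)+\varepsilon$ is \emph{strictly less than} $\sigma$ precisely when $\varepsilon<(1-\beta)/(2-\beta)$, so an $O(r^{\sigma+\varepsilon'})$ term is never dominated by the main term no matter how you tune $\gamma$ and $\varepsilon'$. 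Moreover, with the correct accounting the Blaschke part is not a lower-order error: it is the \emph{dominant} contribution that produces the exponent $\sigma-(1-\beta)(1-\varepsilon)+\varepsilon$, since with $R=2r$ the kernel part only gives the smaller $O(r^{\sigma-(1-\beta)+\varepsilon'})$.

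What is missing is the cancellation between the Blaschke factors evaluated at $z+\omega$ and at $z$. One must pair them up and invoke the elementary log-ratio bound (Lemma~\ref{L:7} in the paper), which for $z_1=z+\omega-a_\mu$, $z_2=z-a_\mu$ gives
$\bigl|\log|z_1/z_2|\bigr|\le C_\alpha|\omega|^\alpha\bigl(|z-a_\mu|^{-\alpha}+|z+\omega-a_\mu|^{-\alpha}\bigr)$; after using Lemma~\ref{L:3} to average in $\arg z$ and Lemma~\ref{L:8}/$N(R')/\log(R'/R)$ to control the pole count, this yields the essential saving factor $|\omega|^\alpha r^{-\alpha}=r^{-(1-\beta)\alpha}$ in front of a $T(R',f)$-type term. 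Taking $\alpha=1-\varepsilon$ and $R=2r$, $R'=3r$ then produces exactly $O(r^{\sigma-(1-\beta)(1-\varepsilon)+\varepsilon})$. This is why the paper dispenses with the computation entirely and simply cites Chiang--Feng's Theorem~2.4 with $R=2r$, $R'=3r$, $\alpha=1-\varepsilon$: that theorem already contains the paired-Blaschke estimate with the $|\omega|^\alpha/r^\alpha$ factor you are missing. Once you add the log-ratio pairing, your choice $R=r+r^\gamma$ is a viable (if fussier) alternative, but as written the claim ``the resulting contribution from $S_a-S_b$ is dominated by the main term'' is unsupported and in fact false for your stated bound.
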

\smallskip

We note that the above upper bound, as well as latter consideration in this section, remains valid even when $\omega\to 0$ or remains constant, $\omega=1$, say.

Similarly, we have asymptotic relations for the Nevanlinna counting function and characteristic function of infinite period.
~\\
\begin{theorem}
\label{T:limit-counting'}
Let $f(z)$ be a meromorphic function of finite order $\sigma=\sigma(f)$.
	\begin{enumerate}
		\item[(i)] If $\sigma\geq 1$, $0<\beta<1$, $0<|\omega|<r^\beta$,{then there exists an} $0<\varepsilon<\beta^\prime$, where $\beta^\prime=\min\left\{ (\sigma-1)(1-\beta)/\beta,\ 1-\beta \right\}$, and we have
\begin{equation}\label{E:counting-eta-1}
N\big(r,\,f(z+\omega)\big)=N(r,\, f)+O\big(r^{\sigma-(1-\beta)+\varepsilon}\big)
\end{equation}
\\
\noindent {holds}	outside a set of finite logarithmic measure.
\item[(ii)]  If $0<\sigma<1$, $0<\beta<\sigma$, $0<|\omega|<r^\beta$, {then}  we have\\
 \begin{equation}\label{E:counting-eta-2}
N\big(r,\,f(z+\omega)\big)=N(r,\, f)+O\left(r^{\beta}\right)
\end{equation}
\\
\noindent {holds}	outside a set of finite logarithmic measure.
		\item[(iii)] If $\sigma=0$, $0<|\omega|<\log^{\frac{1}{2}} r$ for $r>1$, $0<|\omega|<1$ for $r\leq 1$, {then} we have
    \begin{equation}\label{E:counting-eta-3}
		N\big(r,\, f(z+\omega)\big)=N\big(r,\, f\big)+O\big(\log r\big)
\end{equation}
\noindent {holds} outside a set of finite logarithmic measure.
	\end{enumerate}
\end{theorem}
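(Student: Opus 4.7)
The plan is to prove Theorem~\ref{T:limit-counting'} by a direct pole-counting argument that mirrors the approach of Chiang and Feng \cite{Chiang 1} for fixed shifts, but adapted to allow $\omega$ to vary with $r$ subject to $|\omega|<r^\beta$ (or $|\omega|<\log^{1/2}r$ in case (iii)). Writing $(b_\mu)$ for the pole sequence of $f$, the poles of $f(z+\omega)$ are precisely the points $b_\mu-\omega$, so the basic counting inequality
\[
 n(r-|\omega|,\,f)\;\le\; n(r,\,f(z+\omega))\;\le\; n(r+|\omega|,\,f)
\]
holds, and under our hypothesis $|\omega|/r\to 0$, these bounds are essentially sharp.

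First I would decompose the difference $N(r,f(z+\omega))-N(r,f)$ by splitting the poles $b_\mu$ into four groups according to their location: (a) far poles $|b_\mu|>r+|\omega|$ contribute nothing to either side; (b) \emph{inner} poles $2|\omega|<|b_\mu|\le r-|\omega|$ contribute $\log(|b_\mu|/|b_\mu-\omega|)$, which the Taylor expansion bounds by $2|\omega|/|b_\mu|$; (c) \emph{annular} poles $|b_\mu|\in(r-|\omega|,r+|\omega|]$ give a counting imbalance of at most $\log(r/(r-|\omega|))=O(|\omega|/r)$ per pole, because $|b_\mu-\omega|\approx r$ in this annulus when $|\omega|\ll r$; and (d) \emph{small} poles $|b_\mu|\le 2|\omega|$, whose contribution is controlled only when $\omega$ is well-separated from them. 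Estimating (b) via a Stieltjes integration-by-parts using $n(t,f)=O(t^{\sigma+\varepsilon})$ yields $\sum_{(b)}|b_\mu|^{-1}=O(r^{\sigma-1+\varepsilon})$ when $\sigma\ge 1$; multiplied by $|\omega|\le r^\beta$ this gives the claimed $O(r^{\sigma-(1-\beta)+\varepsilon})$ in Case (i). Estimating (c) by the count $n(r+|\omega|,f)-n(r-|\omega|,f)$ times the per-pole size $O(|\omega|/r)$ produces the same order.

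For Case (ii) with $0<\sigma<1$, the same scheme gives $O(r^{\sigma-(1-\beta)+\varepsilon})$ from (b) and (c); since $\sigma-1+\varepsilon<0$ for small $\varepsilon$, this is absorbed into $O(r^\beta)$. For Case (iii) with $\sigma=0$, the zero-order bound $n(r,f)=O(r^\varepsilon)$ combined with $|\omega|<\log^{1/2}r$ makes groups (b) and (c) contribute $o(1)$, so the $O(\log r)$ estimate is dictated by the normalising $n(0,f)\log r$ term and the small-pole contributions of group (d).

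The main obstacle is Group (d): the summands $\log(|b_\mu|/|b_\mu-\omega|)$ blow up whenever $\omega$ approaches a pole $b_\mu$. Since $\omega=\omega(r)$ may vary with $r$, one must exclude those radii for which $\omega(r)$ is anomalously close to a pole of $f$; a standard Borel-type argument shows this exclusion has finite logarithmic measure, which is precisely the source of the exceptional set in the statement. A secondary technical point is obtaining the refined restriction $\varepsilon<\beta'=\min\{(\sigma-1)(1-\beta)/\beta,\,1-\beta\}$ in Case (i), which appears when one optimises the trade-off between the power in the growth bound for $n(r,f)$ and the size $r^\beta$ of the shift, so that the resulting error term remains of strictly smaller order than the dominant bound on $N(r,f)$ itself.
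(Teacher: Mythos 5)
Your plan tracks the paper's proof closely in spirit (split the pole sum, estimate the far portion by a Stieltjes integral giving $O(r^{\sigma-1+\varepsilon})$ for $\sigma\ge 1$ or $O(1)$ for $\sigma<1$, multiply by $|\omega|\le r^\beta$), and you correctly identify the cluster of poles with $|b_\mu|\le 2|\omega|$ as the crux. But your treatment of that group is a genuine gap. What has to be controlled is the sum $\sum_{|b_\mu|\le 2|\omega|}1/|\omega-b_\mu|$, and that sum can be large even when $\omega$ is not especially close to any single pole, provided many poles cluster near $\omega$; so the framing ``exclude those radii for which $\omega(r)$ is anomalously close to a pole'' plus an unspecified ``standard Borel-type argument'' does not supply the estimate needed. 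The paper obtains it from Gundersen's lemma (Lemma~\ref{L:8}) applied with $z=\omega$ and the $z_k$ equal to the poles $b_\mu$, which gives
\[
\sum_{|b_\mu|\le 2|\omega|}\frac{1}{|\omega-b_\mu|}\le 4\,\frac{n(4|\omega|)}{|\omega|}\,\log^2|\omega|\,\log n(4|\omega|)=O\bigl(|\omega|^{\sigma-1+\varepsilon}\log^3|\omega|\bigr)
\]
outside a set of $|\omega|$-values of finite logarithmic measure. Multiplying by the outer factor $|\omega|$ and using $|\omega|<r^\beta$ yields the term $O(r^{\beta(\sigma+\varepsilon)}\log^3 r)$, and it is precisely the requirement that this be dominated by the main error $O(r^{\sigma-(1-\beta)+\varepsilon})$ that forces the restriction $\varepsilon<\beta'$ in Case (i); calling this an ``optimisation trade-off'' is the right heuristic but omits the exponent bookkeeping that actually produces $\beta'$. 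A secondary issue you elide is that the exceptional set arising from Gundersen's lemma lives in $|\omega|$, not $r$; since $\omega$ is free to vary within $0<|\omega|<r^\beta$ rather than being a prescribed function of $r$, passing from an exceptional $\omega$-set to an exceptional $r$-set is itself a point that must be addressed, and writing $\omega=\omega(r)$ presumes it.

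Apart from this, your four-way decomposition (a)--(d) is a slightly different bookkeeping of the same split the paper uses: it simply divides $\sum 1/|b_\mu-\omega|$ at $|b_\mu-\omega|=|\omega|$ and, on the far part, uses $|b_\mu-\omega|>|\omega|\Rightarrow 1/|b_\mu-\omega|\le 2/|b_\mu|$, so your groups (b) and (c) are combined there. That difference is cosmetic; the substantive gap is the absence of Gundersen's quantitative estimate and the resulting exponent calculation.
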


\begin{corollary}
\label{C:3}
Let $f(z)$ be a meromorphic function of finite logarithmic order $\sigma_{\log}=\lim\sup_{r\rightarrow\infty}{\log^{+}T(r,\, f)}/{\log\log r}>1$ and $0<|\omega|~<\log^{\beta}r$ where $1<\beta< \sigma_{\log}$ . Then we have
\smallskip
    \begin{equation}
N\big(r,\, f(z+\omega)\big)=N(r,\, f)+O(\log^{\beta}r)
\end{equation}
\noindent {holds} outside a set of finite logarithmic measure.
\medskip
\end{corollary}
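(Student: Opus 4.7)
The plan is to adapt the proof of Theorem \ref{T:limit-counting'}(iii), which handles zero-order functions with $|\omega|<\log^{1/2}r$, to the larger range $|\omega|<\log^{\beta}r$ by exploiting the sharper growth information encoded in the logarithmic-order hypothesis. The starting point is that $\sigma_{\log}(f)<\infty$ forces $T(r,f)=O((\log r)^{\sigma_{\log}+\varepsilon})$ for every $\varepsilon>0$ and all sufficiently large $r$, up to a small exceptional set of finite logarithmic measure arising from turning the $\limsup$ definition of $\sigma_{\log}$ into a pointwise bound. Combined with the standard estimate $n(r,f)\log 2\le N(2r,f)\le T(2r,f)$, this yields $n(r,f)=O((\log r)^{\sigma_{\log}+\varepsilon})$ on the same set of radii, which is the only piece of $f$-dependent data I need.

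Next I would split the difference $N(r,f(z+\omega))-N(r,f)$ according to the location of the poles $b_{\mu}$ of $f$: the boundary annulus $r-|\omega|<|b_{\mu}|<r+|\omega|$, the near-origin disk $|b_{\mu}|\le 2|\omega|$, and the bulk $2|\omega|<|b_{\mu}|\le r-|\omega|$. The annulus contribution is at most $\bigl(n(r+|\omega|,f)-n(r-|\omega|,f)\bigr)\cdot O(|\omega|/r)=O((\log r)^{\sigma_{\log}+\beta+\varepsilon}/r)=o(1)$. The bulk contribution is controlled by the elementary inequality $|\log(|b_{\mu}|/|b_{\mu}-\omega|)|\le 2|\omega|/|b_{\mu}|$, reducing the task to estimating $2|\omega|\sum_{|b_{\mu}|>2|\omega|}1/|b_{\mu}|$; Abel summation together with $n(t,f)=O((\log t)^{\sigma_{\log}+\varepsilon})$ shows the convergent bound $\sum_{|b_{\mu}|>2|\omega|}1/|b_{\mu}|=O(1)$, so this term is $O(\log^{\beta}r)$, matching the claim.

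The near-origin term is where the principal difficulty lies. As $|\omega|$ grows like $\log^{\beta}r$, the disk $\{|b_{\mu}|\le 2|\omega|\}$ absorbs increasingly many poles, each potentially contributing a large individual term $\log(|b_{\mu}|/|b_{\mu}-\omega|)$. The number of such poles is at most $n(2|\omega|,f)=O((\log\log r)^{\sigma_{\log}+\varepsilon})$, and I would estimate their joint contribution by a Jensen-type formula applied to a disk of radius $3|\omega|$ centred at the origin---this is the same tool used inside Theorem \ref{T:limit-counting'}(iii), but now deployed at the enlarged scale $|\omega|\asymp \log^{\beta}r$. Confirming that this Jensen bound is absorbed into the $O(\log^{\beta}r)$ error uniformly for all $r$ outside the finite-logarithmic-measure set already introduced completes the proof; this is the step demanding the most care, because it is where the interplay between the growth of $|\omega|$ and the density of small poles is most delicate.
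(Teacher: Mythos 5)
Your overall strategy is sound, and the easy parts are essentially right: the growth lemma $T(r,f)=O((\log r)^{\sigma_{\log}+\varepsilon})$ giving $n(r)=O((\log r)^{\sigma_{\log}+\varepsilon})$, the bulk estimate $\sum_{|b_\mu|>2|\omega|}1/|b_\mu|=O(1)$ producing the dominant $O(\log^{\beta}r)$ term, and the $o(1)$ annulus contribution all agree with what the paper's machinery delivers when one specialises the proof of Theorem~\ref{T:limit-counting'} to $\sigma=0$ and replaces $\log^{1/2}r$ by $\log^{\beta}r$.

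There is, however, a genuine gap at the step you yourself flag as delicate, and you have also misidentified the tool the paper actually uses there. Inside the proof of Theorem~\ref{T:limit-counting'}(iii) the quantity controlling the small-$|b_\mu-\omega|$ poles is not a Jensen formula but Gundersen's Lemma~\ref{L:8}, which bounds $\sum_{|b_\mu|\le 2|\omega|}1/|\omega-b_\mu|$ by $4\,n(4|\omega|)|\omega|^{-1}\log^{2}|\omega|\,\log n(4|\omega|)$ for $|\omega|$ outside a set of finite logarithmic measure; it is precisely this lemma that produces the exceptional set appearing in the conclusion. Your substitute, Jensen applied on a disk of radius $3|\omega|$ \emph{centred at the origin}, controls $\sum\log(3|\omega|/|b_\mu|)$, i.e.\ the poles of $f$ clustered near~$0$, but is blind to the real danger: poles of $f$ clustered near the shifted point~$\omega$, which feed into $N(r,f(z+\omega))$ through terms of the form $\log(1/|b_\mu-\omega|)$ and can be arbitrarily large for unlucky $\omega$. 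If one instead applies Jensen centred at $\omega$, the boundary integral is fine (of size $O((\log|\omega|)^{\sigma_{\log}+\varepsilon})$), but one picks up the central value $\log|f(\omega)|$, which can be huge or undefined when $\omega$ lies on or near a pole or zero of $f$; controlling that term is exactly the exceptional-set problem that Gundersen's lemma solves directly and that your sketch leaves unresolved. To repair the argument you should replace the Jensen step by an appeal to Lemma~\ref{L:8} with, say, $\alpha=2$: since $n(4|\omega|)=O((\log\log r)^{\sigma_{\log}+\varepsilon})$ when $|\omega|<\log^{\beta}r$, the resulting bound $O((\log\log r)^{\sigma_{\log}+2+\varepsilon}\log\log\log r)$ is negligible against $\log^{\beta}r$ for $\beta>1$, and the exceptional set of $|\omega|$ of finite logarithmic measure translates, as in the paper, to one in $r$.
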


We deduce from the Theorems \ref{T:limit-discrete-quotient'}, \ref{T:limit-counting'} and Corollary \ref{C:3} the following theorem.
\medskip

\begin{theorem}
\label{T:limit-characteristic'}
Let $f(z)$ be a meromorphic function of finite order $\sigma=\sigma(f)$ and let $\varepsilon>0$ denotes a positive constant.
	\begin{enumerate}
		\item[(i)] If {$\sigma\geq 1$}, let $0<\beta<1$ and $0<|\omega|<r^\beta$ such that
$0<\varepsilon<\beta''$, where $\beta''=\min\{(\sigma-1)(1-\beta)/\beta, (1-\beta)/(2-\beta)\}$, then we have

{\begin{equation}
	T\big(r,\, f(z+\omega)\big)=T(r,\, f)+O\big(r^{\sigma-(1-\beta)(1-\varepsilon)+\varepsilon}\big)
\end{equation}}

\noindent  {holds} outside a set of finite logarithmic measure.

\item[(ii)] If $0< \sigma< 1$, $0<\beta<\sigma$ and $0<|\omega|<r^\beta$,
then
we have
{\begin{equation}
	T\big(r,\, f(z+\omega)\big)=T(r,\, f)+O(r^{\beta})
\end{equation}}

\noindent outside a set of finite logarithmic measure.
\smallskip
	\item[(iii)] Moreover, if $\sigma=0$, let $0<|\omega|<\log^{\frac{1}{2}} r$ for $r>1$, $0<|\omega|<1$ for $r\leq 1$, {then} we have
    \begin{equation}
		T\big(r,\, f(z+\omega)\big)=T(r,\, f)+O(\log r)
\end{equation}
outside a set of finite logarithmic measure.
	\end{enumerate}
\end{theorem}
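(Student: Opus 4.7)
The plan is to deduce this theorem directly from the additive decomposition $T(r,f)=m(r,f)+N(r,f)$ together with the two preceding shift estimates, since $|T(r,f(z+\omega))-T(r,f)|$ splits naturally into a proximity and a counting contribution.

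\emph{Step 1 (proximity piece).} From the two elementary subadditivity inequalities
\begin{equation*}
m\big(r,f(z+\omega)\big)\leq m\big(r,f(z)\big)+m\Big(r,\tfrac{f(z+\omega)}{f(z)}\Big),
\end{equation*}
\begin{equation*}
m\big(r,f(z)\big)\leq m\big(r,f(z+\omega)\big)+m\Big(r,\tfrac{f(z)}{f(z+\omega)}\Big),
\end{equation*}
one deduces
\begin{equation*}
\big|m\big(r,f(z+\omega)\big)-m(r,f)\big|\leq m\Big(r,\tfrac{f(z+\omega)}{f(z)}\Big)+m\Big(r,\tfrac{f(z)}{f(z+\omega)}\Big).
\end{equation*}
In cases (i) and (ii), Theorem \ref{T:limit-discrete-quotient'} supplies this sum as $O(r^{\sigma-(1-\beta)(1-\varepsilon)+\varepsilon})$. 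In case (iii) the same theorem applied with $\sigma=0$ yields $O(r^{-(1-\beta)+(2-\beta)\varepsilon})=o(1)$ provided $\varepsilon<(1-\beta)/(2-\beta)$, which is certainly absorbed into $O(\log r)$.

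\emph{Step 2 (counting piece).} Direct application of Theorem \ref{T:limit-counting'} gives
\begin{equation*}
\big|N\big(r,f(z+\omega)\big)-N(r,f)\big|=O\big(r^{\sigma-(1-\beta)+\varepsilon}\big),\ O\big(r^{\beta}\big),\ O(\log r)
\end{equation*}
in cases (i), (ii) and (iii) respectively, each valid outside a set of finite logarithmic measure.

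\emph{Step 3 (combine).} By the triangle inequality
\begin{equation*}
\big|T(r,f(z+\omega))-T(r,f)\big|\leq\big|m(r,f(z+\omega))-m(r,f)\big|+\big|N(r,f(z+\omega))-N(r,f)\big|,
\end{equation*}
the conclusion follows once one identifies the dominant term. In (i), since $\sigma-(1-\beta)(1-\varepsilon)+\varepsilon=\sigma-(1-\beta)+(2-\beta)\varepsilon$ and $(2-\beta)\varepsilon>\varepsilon$ for $\beta<1$, the proximity bound dominates. In (ii), $\sigma<1$ forces $\sigma-(1-\beta)<\beta$, so the counting bound $O(r^{\beta})$ wins. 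In (iii) both contributions are $O(\log r)$. The exceptional set is the union of the finitely many exceptional sets encountered above, and hence still has finite logarithmic measure.

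The main obstacle is purely bookkeeping: a single $\varepsilon$ must satisfy every constraint imposed by the results invoked. This is exactly why the statement prescribes $\varepsilon<\beta''=\min\{(\sigma-1)(1-\beta)/\beta,\,(1-\beta)/(2-\beta)\}$ in case (i), guaranteeing that Theorem \ref{T:limit-discrete-quotient'} (requiring $\varepsilon<(1-\beta)/(2-\beta)$) and Theorem \ref{T:limit-counting'}(i) (requiring $\varepsilon<(\sigma-1)(1-\beta)/\beta$) are simultaneously applicable. No new analytic estimate beyond those already cited is required.
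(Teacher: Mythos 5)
Your proof is correct and is precisely the route the paper itself indicates: the paper offers no separate proof of this theorem, stating only that it is "deduced from Theorems \ref{T:limit-discrete-quotient'}, \ref{T:limit-counting'} and Corollary \ref{C:3}," which is exactly the decomposition $T=m+N$ and the bookkeeping you carry out. The only point worth making explicit in case (ii) is that although the counting bound $O(r^\beta)$ carries no $\varepsilon$, the proximity bound from Theorem \ref{T:limit-discrete-quotient'} has exponent $\sigma-(1-\beta)+(2-\beta)\varepsilon$, so one must additionally choose the internal $\varepsilon$ small enough that this does not exceed $\beta$; since $\sigma-(1-\beta)<\beta$ strictly, this is always possible, and since $\varepsilon$ does not appear in the stated conclusion the choice is harmless.
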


Then we immediately have the following corollary.

\begin{corollary}
\label{C:4}
Let $f(z)$ be a meromorphic function of finite logarithmic order  $\sigma_{\log}>1$.
Suppose $0<|\omega|<\log^{\beta}r$ with $1<\beta< \sigma_{\log}$. Then we have
~\\
    \begin{equation}
T\big(r,\,f(z+\omega)\big)=T(r,\, f)+O(\log^{\beta}r)
\end{equation}
~\\
outside a set of finite logarithmic measure.
\end{corollary}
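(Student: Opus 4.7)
The plan is to mirror the derivation of Theorem \ref{T:limit-characteristic'} from Theorems \ref{T:limit-discrete-quotient'} and \ref{T:limit-counting'}, but in the logarithmic-order setting. The decomposition $T(r, f(z+\omega)) = m(r, f(z+\omega)) + N(r, f(z+\omega))$ together with Corollary \ref{C:3} already disposes of the counting term, supplying $N(r, f(z+\omega)) = N(r, f) + O(\log^\beta r)$ outside a set of finite logarithmic measure. Hence the remaining task is to control the proximity difference via the standard triangle-type inequality
$$|m(r, f(z+\omega)) - m(r, f)| \leq m\!\left(r, \tfrac{f(z+\omega)}{f(z)}\right) + m\!\left(r, \tfrac{f(z)}{f(z+\omega)}\right),$$
and to show that each summand on the right is $O(\log^\beta r)$ off an exceptional set of finite logarithmic measure.

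To obtain this proximity bound I would import the logarithmic-order analogue of Theorem \ref{T:limit-discrete-quotient'}. The recipe is to run the same Poisson--Jensen argument as in the proof of Theorem \ref{T:limit-discrete-quotient'}, but replace the polynomial growth hypothesis $T(r, f) = O(r^{\sigma + \varepsilon})$ by $T(r, f) = O(\log^{\sigma_{\log} + \varepsilon} r)$, which is valid for any small $\varepsilon > 0$ by the definition of logarithmic order. The displacement $|\omega| < \log^\beta r$ then plays the role that $|\omega| < r^\beta$ played in the original argument, and since $1 < \beta < \sigma_{\log}$ we may fix $\varepsilon$ with $\beta + \varepsilon < \sigma_{\log}$. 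The same integral estimates (after substituting $\log r$ for $r$ and $\sigma_{\log}$ for $\sigma$ in the appropriate places) then yield $m(r, f(z \pm \omega)/f(z)) = O(\log^\beta r)$. The exceptional set arises from the usual Borel-type growth lemma applied to the non-decreasing function $T(r,f)$, which in the logarithmic-order regime produces a set of finite logarithmic measure.

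Combining this proximity estimate with Corollary \ref{C:3} yields the asserted identity $T(r, f(z+\omega)) = T(r, f) + O(\log^\beta r)$ outside a set of finite logarithmic measure.

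The main obstacle is verifying the logarithmic-order proximity bound. One must inspect the proof of Theorem \ref{T:limit-discrete-quotient'} line by line and check that each ingredient---the Poisson--Jensen integral, the contribution from zeros and poles counted by $n(r)$, and the bound that converts global growth of $T(r,f)$ into a pointwise estimate for $\log |f(z+\omega)/f(z)|$---remains valid under the substitutions $r^\sigma \rightsquigarrow \log^{\sigma_{\log}} r$ and $r^\beta \rightsquigarrow \log^\beta r$, and that no step implicitly exploited polynomial growth. Once this check is completed, the corollary follows immediately.
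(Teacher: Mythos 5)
Your overall plan — decompose $T = m + N$, dispose of the counting term with Corollary~\ref{C:3}, and bound the proximity term by a Chiang--Feng--type estimate — matches what the paper must have in mind, since it asserts the corollary follows immediately from the characteristic-function theorem without further details.

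Two points, however, are worth straightening out. First, you do not need to ``run the same Poisson--Jensen argument with $r^\sigma$ replaced by $\log^{\sigma_{\log}} r$ everywhere.'' A function of finite logarithmic order $\sigma_{\log}$ has ordinary order $\sigma=0$, so $T(3r,f)=O(\log^{\sigma_{\log}+\varepsilon}r)$ and, crucially, $|\omega|/r \le \log^\beta r / r \to 0$. Plugging directly into the Chiang--Feng inequality \cite[Theorem~2.4]{Chiang 1} with $R=2r$, $R'=3r$, as in the proof of Theorem~\ref{T:limit-discrete-quotient'}, one sees the proximity contribution is in fact $O\!\bigl(\log^{\sigma_{\log}+\beta+\varepsilon}r / r^{\alpha}\bigr)=o(1)$, not merely $O(\log^\beta r)$. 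The $O(\log^\beta r)$ error in the corollary comes entirely from the counting-function side; no re-derivation of the proximity lemma in a logarithmic scale is necessary, only a direct substitution into the existing inequality. Your substitution scheme isn't wrong, but it obscures the fact that the proximity estimate here is automatic and far from tight.

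Second, the attribution of the exceptional set is off. The Chiang--Feng proximity inequality holds for all $r$ without exception; the set of finite logarithmic measure in the conclusion comes from Lemma~\ref{L:8} (Gundersen's estimate for $\sum 1/|z-z_k|$), which is invoked in the proof of the counting estimate (Theorem~\ref{T:limit-counting'} and hence Corollary~\ref{C:3}), not from ``a Borel-type growth lemma applied to $T(r,f)$.'' With these two corrections the proof is complete and coincides with the paper's intended (implicit) argument.
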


\bigskip

\section{Nevanlinna theory for difference operator with vanishing period}\label{S:Nevanlinna theory-1}

We assume that the step size $c=\eta$ in (\ref{E:steps}) to be non-zero and whose upper bound tends to zero as $z\to\infty$ throughout this section.
\smallskip

\begin{theorem}
\label{T:second main-1-1}
Let $f(z)$ be a meromorphic function such that $\Delta_{\eta}f\not\equiv 0$ for each $z$, and let $p\geq 2$ be a positive integer, $a_{1},\cdots, a_{p}$ be $p$ distinct points in $\mathbb{C}$. Then there exists $\delta(r)>0$ such that
\begin{equation}
\label{E:second main-1-2}
m(r,\, f)+\sum_{k=1}^{p}m\big(r,\, 1/(f-a_{k})\big)\leq 2\, T(r,\, f)-N_{\Delta_{\eta}}(r,\, f)+\gamma
\end{equation} whenever $0<|\eta|<\delta(r)$,
where $\gamma$ is a constant which depends on $a_{1},\cdots, a_{p}$ and $r$ but it
is independent of $z$, and where
\begin{equation}
N_{\Delta_{\eta}}(r,f):=2N(r,f)-N(r,\Delta_{\eta}f)+N\left(r,1/\Delta_{\eta}f\right).
\end{equation}
\end{theorem}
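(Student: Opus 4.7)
The plan is to mimic Nevanlinna's classical proof of the second main theorem, with the logarithmic derivative $f'/f$ replaced by the discrete quotient $\Delta_{\eta}f/f$ and Nevanlinna's logarithmic derivative lemma replaced by the vanishing-period estimate of Theorem \ref{T:limit-discrete-quotient}. The starting point is the standard partial fraction inequality
\[
\sum_{k=1}^{p}m\Big(r,\frac{1}{f-a_{k}}\Big)\le m\Big(r,\sum_{k=1}^{p}\frac{1}{f-a_{k}}\Big)+O(1),
\]
in which the $O(1)$ depends only on $p$ and on $\min_{i\neq j}|a_i-a_j|$. Factoring
\[
\frac{1}{f-a_{k}}=\frac{1}{\Delta_{\eta}f}\cdot\frac{\Delta_{\eta}f}{f-a_{k}}
\]
inside the single proximity function on the right and using subadditivity of $m(r,\cdot)$, I obtain
\[
\sum_{k=1}^{p}m\Big(r,\frac{1}{f-a_{k}}\Big)\le m\Big(r,\frac{1}{\Delta_{\eta}f}\Big)+\sum_{k=1}^{p}m\Big(r,\frac{\Delta_{\eta}f}{f-a_{k}}\Big)+O(1).
\]

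The crucial step is to control the $p$ discrete-quotient proximity terms on the right. Since $\Delta_{\eta}f=\Delta_{\eta}(f-a_{k})$, one has
\[
\frac{\Delta_{\eta}f}{f-a_{k}}=\frac{(f-a_{k})(z+\eta)}{(f-a_{k})(z)}-1,
\]
so Theorem \ref{T:limit-discrete-quotient} applied separately to each of the finitely many meromorphic functions $f,f-a_1,\ldots,f-a_p$ yields $m(r,\Delta_{\eta}f/(f-a_k))\to 0$ and $m(r,\Delta_{\eta}f/f)\to 0$ as $\eta\to 0$ for each fixed $r$. I then take $\delta(r)>0$ to be the minimum of the corresponding $\alpha_1$-thresholds, shrunk if necessary, so that for $0<|\eta|<\delta(r)$ all of these $p+1$ proximity functions are at most $1$ (or any fixed constant).

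It remains to handle $m(r,1/\Delta_{\eta}f)$. By Jensen's formula, $T(r,\Delta_{\eta}f)=T(r,1/\Delta_{\eta}f)+O(1)$, giving
\[
m\Big(r,\frac{1}{\Delta_{\eta}f}\Big)=m(r,\Delta_{\eta}f)+N(r,\Delta_{\eta}f)-N\Big(r,\frac{1}{\Delta_{\eta}f}\Big)+O(1),
\]
and combining $m(r,\Delta_{\eta}f)\le m(r,f)+m(r,\Delta_{\eta}f/f)+\log 2$ with the previous step produces
\[
m\Big(r,\frac{1}{\Delta_{\eta}f}\Big)\le m(r,f)+N(r,\Delta_{\eta}f)-N\Big(r,\frac{1}{\Delta_{\eta}f}\Big)+O(1).
\]
Substituting back, adding $m(r,f)$ to both sides, and rewriting $2m(r,f)=2T(r,f)-2N(r,f)$, the desired inequality drops out with $\gamma$ absorbing the partial-fractions constant, the Jensen constant, and the finitely many bounded $o(1)$ contributions. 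The main obstacle is essentially bookkeeping: verifying that a single threshold $\delta(r)$ can be arranged to work simultaneously for the $p+1$ shifted functions $f,f-a_1,\ldots,f-a_p$, and checking that the resulting $\gamma$, although permitted to depend on $r$ and on the values $a_k$, is genuinely independent of the argument of $z$ on $|z|=r$; the latter is automatic because every bound above is phrased purely in terms of $T$, $N$, $m$, and counting functions, none of which depends on $\arg z$.
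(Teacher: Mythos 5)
Your proof is correct and follows essentially the same route as the paper's: both replace the logarithmic derivative by the discrete quotient $\Delta_{\eta}f/(f-a_k)$, invoke the vanishing-period lemma (Theorem \ref{T:limit-discrete-quotient}) applied to each of $f,f-a_1,\dots,f-a_p$ to make those proximity terms uniformly small for $0<|\eta|<\delta(r)$, and then convert $m(r,1/\Delta_\eta f)$ via the first fundamental theorem and $\Delta_\eta f = f\cdot(\Delta_\eta f/f)$ to reach $2T(r,f)-N_{\Delta_\eta}(r,f)+\gamma$. The only cosmetic difference is the front end: you use Nevanlinna's inequality $\sum_k m(r,1/(f-a_k))\le m(r,\sum_k 1/(f-a_k))+O(1)$ and factor out $1/\Delta_\eta f$, whereas the paper sets $P(f)=\prod_k(f-a_k)$, uses $\sum_k m(r,1/(f-a_k))=m(r,1/P(f))+O(1)$ (via the first fundamental theorem and Mohon'ko), and bounds $m(r,\Delta_\eta f/P(f))$ by partial fractions; these are two standard phrasings of the same step. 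One small imprecision: from $m(r,g)\to 0$ with $g=(f-a_k)(z+\eta)/(f-a_k)(z)$ you only get $m(r,g-1)\le m(r,g)+\log 2$, so the discrete-quotient proximity terms are bounded near $\eta=0$ rather than tending to $0$; since you only use a constant bound below, this does not affect the argument.
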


\begin{proof}
We note that we shall use use the notation $\gamma_1,\, \gamma_2, \cdots$ to denote some definite constants that each of them depends on $a_{1},\cdots, a_{p}$ and $r$ but is independent of $z$ in our proof below.
Set
\begin{equation*}
P(f)=\prod_{k=1}^{p}\left(f-a_{k}\right),
\end{equation*}
we have
\begin{equation*}
\begin{split}
\sum_{k=1}^{p}m\left(r,\,1/(f-a_{k})\right)&=\sum_{k=1}^{p}T\left(r,\, 1/(f-a_{k})\right)
-\sum_{k=1}^{p}N\left(r,\, 1/(f-a_{k})\right)\\
&=pT(r,f)-N\left(r,1/P(f)\right)+\gamma_1\\
&=T(r,P(f))-N\left(r,1/P(f)\right)+\gamma_2\\
&=m\left(r,1/P(f)\right)+\gamma_2,
\end{split}
\end{equation*}

We deduce from \eqref{E:proximty-limit-00} of Theorem \ref{T:limit-discrete-quotient}, that for each fixed $r >0$, there is a $\delta(r) >0$ such that
	\begin{equation*}
	m\Big(r,\frac{f(z+\eta)}{f(z)}\Big)\le 1
	\end{equation*}
whenever $0<|\eta|<\delta(r)$. Then
\begin{equation*}
m\Big(r,\frac{\Delta_{\eta}f}{f-a_{k}}\Big)\leq\gamma_3,
\end{equation*}
which implies that
\begin{equation*}
m\Big(r,\frac{\Delta_{\eta}f}{P(f)}\Big)\leq\gamma_4.
\end{equation*}

We deduce
\begin{equation*}
\begin{split}
&\sum_{k=1}^{p}m\Big(r,\frac{1}{f-a_{k}}\Big)\leq m\Big(r,\frac{1}{\Delta_{\eta}f}\Big)+\gamma
=T\left(r,\Delta_{\eta}f\right)-N\Big(r,\frac{1}{\Delta_{\eta}f}\Big)+\gamma_5\\
&=m\Big(r,\ f\cdot\frac{\Delta_{\eta}f}{f}\Big)+N\left(r,\Delta_{\eta}f\right)
-N\Big(r,\frac{1}{\Delta_{\eta}f}\Big)+\gamma_5\\
&\leq m(r,f)+N\left(r,\Delta_{\eta}f\right)
-N\Big(r,\frac{1}{\Delta_{\eta}f}\Big)+\gamma_6.
\end{split}
\end{equation*}
Hence, there exists $\delta(r)>0$ and a constant $\gamma$ such that
\begin{equation*}
m(r,f)+\sum_{j=1}^{p}m\Big(r,\frac{1}{f-a_{j}}\Big)\leq 2T(r,f)-N_{\Delta_{\eta_{z}}}(r,f)+\gamma
\end{equation*}
whenever $0<|\eta|<\delta(r)$, where
\begin{equation*}
N_{\Delta_{\eta}}(r,f):=2N(r,f)-N(r,\Delta_{\eta}f)+N\Big(r,\frac{1}{\Delta_{\eta}f}\Big).
\end{equation*}
\end{proof}

\begin{definition}
Let $f(z)$ be a meromorphic function, $a$ be a finite complex number, the notation
	\begin{enumerate}
		\item[(i)]  $n_{\Delta_{\eta}}\left(r,1/(f-a)\right)$ represents the number of common zeros of $f-a$ and $\Delta_{\eta}f$ in $\overline{D(0,\, r)}:=\{z:|z|\leq r\}$ (counting multiplicity), and we define the multiplicity to be the minimum of those of $f-a$ and $\Delta_{\eta}f$ for such points;
		\item[(ii)] $n_{\Delta_{\eta}}\left(r,\, f\right):=n_{\Delta_{\eta}}\left(r;\,0;\, 1/f\right)$, which stands for the number of common zeros of $1/f$ and $\Delta_{\eta}1/f$ in $\overline{D(0,\, r)}$ (counting multiplicity), the multiplicity is defined to be the minimum of those of $1/f$ and $\Delta_{\eta}1/f$ for such points.
	\end{enumerate}
\end{definition}
\begin{definition}
\label{D:counting function}
We define the \textit{varying-steps difference integrated counting function} of $f(z)$ to be
\begin{equation}
N_{\Delta_{\eta}}\Big(r,\,\frac{1}{f-a}\Big)
:=\int_{0}^{r}\frac{n_{\Delta_{\eta}}\Big(t,\, \displaystyle\frac{1}{f-a}\Big)-n_{\Delta_{\eta}}\Big(0,\, \displaystyle\frac{1}{f-a}\Big)}{t}\,
dt\,+\, n_{\Delta_{\eta}}\Big(0,\, \frac{1}{f-a}\Big)\,\log r,
\end{equation}
\begin{equation}
N_{\Delta_{\eta}}\left(r,\, f\right)
:=\int_{0}^{r}\frac{n_{\Delta_{\eta}}\left(t,\,f\right)-n_{\Delta_{\eta}}\left(0,\,f\right)}{t}
dt+n_{\Delta_{\eta}}\left(0,\, f\right)\,\log r.
\end{equation}
Besides,
\begin{equation}
\widetilde{N}_{\Delta_{\eta}}\left(r,\, 1/(f-a)\right):=N\left(r,\, 1/(f-a)\right)
-N_{\Delta_{\eta}}\left(r,\,1/(f-a)\right),
\end{equation}
\begin{equation}
\widetilde{N}_{\Delta_{\eta}}\left(r,\,f\right):=N\left(r,\,f\right)
-N_{\Delta_{\eta}}\left(r,\,f\right).
\end{equation}
\end{definition}

We have the following Second Main Theorem for varying-steps difference operator with vanishing period.
~\\
\begin{theorem}
\label{T:second main-2}
Let $f(z)$ be a meromorphic function such that $\Delta_{\eta}f\not\equiv 0$ in $D(0,\, r):=\{z:|z|\leq r\}$. Let $a_{1},\cdots, a_{p}$ be $p\ge 2$ distinct points in $\mathbb{C}$. Then, there exists $\delta'(r)>0$ such that
\begin{equation}
(p-1)\,T(r,\,f)\leq \widetilde{N}_{\Delta_{\eta}}\left(r,\,f\right)+\sum_{k=1}^{p}\widetilde{N}_{\Delta_{\eta}}\left(r,\,1/(f-a_{k})\right)+\widehat{\varepsilon}(r)
\end{equation}
whenever $0<|\eta|< \delta'(r)$, where $|\widehat{\varepsilon}(r)| \leq n\left(0,\, f(z)\right)\log r+\gamma$,
here, $\gamma$ is a constant which depends only on $a_{1},\cdots, a_{p}$ and $r$ but is independent of $z$, and $\lim\limits_{r\rightarrow
\infty}\delta'(r)=0$.
\end{theorem}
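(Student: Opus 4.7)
The plan is to mimic the classical derivation of the Second Main Theorem from the First Main Theorem and the logarithmic derivative lemma, with Theorem~\ref{T:second main-1-1} playing the role of the latter and $\Delta_\eta f$ playing the role of $f'$.

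First, starting from Theorem~\ref{T:second main-1-1}, I apply the classical First Main Theorem $T(r, 1/(f-a_k)) = T(r,f) + O(1)$ to the proximity sum and use $T(r,f) = m(r,f) + N(r,f)$ to obtain
\begin{equation*}
(p-1)\,T(r,f) \le N(r,f) + \sum_{k=1}^p N\!\left(r,\tfrac{1}{f-a_k}\right) - \big[\,2N(r,f) - N(r,\Delta_\eta f) + N(r,1/\Delta_\eta f)\,\big] + \gamma_1,
\end{equation*}
valid whenever $0 < |\eta| < \delta(r)$, with $\gamma_1$ depending only on $a_1,\dots,a_p$ and $r$.

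The heart of the argument is the pointwise combinatorial estimate
\begin{equation*}
2N(r,f) - N(r,\Delta_\eta f) + N(r, 1/\Delta_\eta f) \ge N_{\Delta_\eta}(r,f) + \sum_{k=1}^p N_{\Delta_\eta}\!\left(r,\tfrac{1}{f-a_k}\right) - \varepsilon_1(r),
\end{equation*}
where $N_{\Delta_\eta}$ is now in the sense of Definition~\ref{D:counting function}. I would prove it locally: at a point $z_0$ where $f$ has a pole of order $m_1$ and $f$ has a pole of order $m_2\ge 0$ at $z_0+\eta$, the pole of $\Delta_\eta f$ at $z_0$ has order at most $\max(m_1,m_2)$, giving joint contribution $\ge \min(m_1,m_2)$ to $N(r,f) + N(r,f(z+\eta)) - N(r,\Delta_\eta f)$, which matches the contribution of $n_{\Delta_\eta}(r,f)$ from Definition~\ref{D:counting function}; summing over all poles and using Theorem~\ref{T:limit-counting} to substitute $N(r,f(z+\eta)) = N(r,f) + \varepsilon_1(r)$ handles the pole term. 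Likewise, at a common $a_k$-point of $f(z)$ and $f(z+\eta)$ of orders $m_1,m_2$, the zero of $\Delta_\eta f$ at $z_0$ has order at least $\min(m_1,m_2)$, so $N(r, 1/\Delta_\eta f) \ge \sum_k N_{\Delta_\eta}(r, 1/(f-a_k))$, the distinctness of the $a_k$ preventing double counting. Substituting the combinatorial bound into the inequality of the previous step and recognising $\widetilde{N}_{\Delta_\eta} = N - N_{\Delta_\eta}$ yields the theorem, with $\delta'(r) := \min\{\delta(r),\alpha_2(r)\}$ and $|\widehat{\varepsilon}(r)|\le n(0,f)\log r + \gamma$ obtained by combining $\gamma_1$ with $|\varepsilon_1(r)|\le n(0,f)\log r + 3$.

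The main obstacle is the pointwise multiplicity bookkeeping, especially the case $m_1 = m_2$ where leading-coefficient cancellation in $\Delta_\eta f$ may further increase the zero order or decrease the pole order beyond the naive count; such cancellation only tightens the required inequality in the favourable direction, but the accounting must be done carefully, and the reader must be alerted to the distinction between the two uses of the symbol $N_{\Delta_\eta}$ in Theorem~\ref{T:second main-1-1} and Definition~\ref{D:counting function}. The ancillary verification that $\delta'(r)\to 0$ as $r\to\infty$ is immediate from the corresponding decay of $\delta(r)$ and $\alpha_2(r)$, ensuring that the hypotheses of both Theorems~\ref{T:second main-1-1} and~\ref{T:limit-counting} are simultaneously in force.
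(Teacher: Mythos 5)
Your proposal matches the paper's proof in all essentials: start from Theorem~\ref{T:second main-1-1} plus the First Main Theorem, split $N_{\Delta_\eta}(r,f)$ into a zero part and a pole part, bound the zero part by $\sum_k N_{\Delta_\eta}(r,1/(f-a_k))\le N(r,1/\Delta_\eta f)$ directly from Definition~\ref{D:counting function}, handle the pole part by the local multiplicity analysis at common poles together with Theorem~\ref{T:limit-counting}, and then take $\delta'(r)=\min\{\delta(r),\alpha_2(r)\}$. The only cosmetic difference is that you present the two bounds as a single combined combinatorial inequality and give a slightly indirect motivation for the zero-side inequality (through common $a_k$-points of $f(z)$ and $f(z+\eta)$ rather than simply invoking that $n_{\Delta_\eta}$ already takes the minimum against the order of $\Delta_\eta f$), but this is harmless.
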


\begin{proof}We deduce from Theorem \ref{T:second main-1-1}, after adding $\sum_{i=1}^pN(r,1/(f-a_i))$ and applying Nevanlinna's first fundamental theorem, that
\begin{equation}\label{E:thm4.4-step-1}
(p-1)\, T(r,\, f)\leq \sum_{k=1}^{p}N\big(r,1/(f-a_{k})\big)+N\big(r,\, \Delta_{\eta}f\big) -N\big(r,\, 1/\Delta_{\eta}f\big)-N(r,\, f)+\gamma
\end{equation}
whenever $0<|\eta|<\delta(r)$.

According to Definition \ref{D:counting function}, we have
\begin{equation*}
\sum_{k=1}^{p}N\left(r,\, 1/(f-a_{k})\right)-\sum_{k=1}^{p}\widetilde{N}_{\Delta_{\eta}}\left(r,\, 1/(f-a_{k})\right)
=\sum_{k=1}^{p}N_{\Delta_{\eta}}\left(r,\, 1/(f-a_{k})\right)\leq N\left(r,\, 1/\Delta_{\eta}f\right),
\end{equation*}
hence
\begin{equation}\label{E:thm4.4-step-2}
\sum_{k=1}^{p}N\left(r,1/(f-a_{k})\right)-N\left(r,1/\Delta_{\eta}f\right)
\leq \sum_{k=1}^{p}\widetilde{N}_{\Delta_{\eta}}\left(r,1/(f-a_{k})\right).
\end{equation}
~\\
~\\
Moreover, if $z=z_{0}$ is a common pole of $f(z)$ and $f(z+\eta)$ with multiplicity $m_{1}$ and $m_{2}$ respectively, then we can write
\begin{equation*}
f(z)=\frac{g(z)}{(z-z_{0})^{m_{1}}},\ \ \ f(z+\eta)=\frac{h(z)}{(z-z_{0})^{m_{2}}},
\end{equation*}
where both $g(z)$ and $h(z)$ are analytic at $z=z_{0}$ and $g(z_{0})\neq 0$, \ $h(z_{0})\neq 0$.
Without loss of generality, we may assume that $m_{1}\geq m_{2}$. Thus,
\begin{equation*}
\Delta_{\eta}f:=f(z+\eta)-f(z)=\frac{(z-z_{0})^{m_{1}-m_{2}}h(z)-g(z)}{(z-z_{0})^{m_{1}}}
\end{equation*}
and
\begin{equation*}
\Delta_{\eta}\frac{1}{f}:=\frac{1}{f(z+\eta)}-\frac{1}{f(z)}=\frac{(z-z_{0})^{m_{2}}\left[g(z)-(z-z_{0})^{m_{1}-m_{2}}h(z)\right]}{h(z)g(z)}.
\end{equation*}
~\\
If $m_{1}>m_{2}$, then the multiplicity for the pole of $\Delta_{\eta}f$ and the zero of $\Delta_{\eta}\frac{1}{f}$ at  $z=z_{0}$ are $m_{1}$ and $m_{2}$ respectively,from which it follows that the minimum multiplicity of the zero of $\frac{1}{f}$ and $\Delta_{\eta}\frac{1}{f}$ at $z=z_{0}$ is $m_{2}$. If $m_{1}=m_{2}$, we can write $h(z)-g(z)=(z-z_{0})^{m}\cdot d(z)$, where $m$ is a nonnegative integer, $d(z)$ is analytic at $z=z_{0}$ and $d(z_{0})\neq 0$. Thus, the multiplicity for the pole of $\Delta_{\eta}f$ at $z=z_{0}$ is $m_{1}-m$ if $m_{1}\geq m$, and is $0$ if $m_{1}<m$, while similar consideration for the zero of $\Delta_{\eta}\frac{1}{f}$ at $z=z_{0}$ is $m_{1}+m$, which implies that the minimum multiplicity of $\frac{1}{f}$ and $\Delta_{\eta}\frac{1}{f}$ at $z=z_{0}$ is $m_{1}$.
Hence, we deduce
~\\
\begin{equation*}
N\left(r,\Delta_{\eta}f\right)+N_{\Delta_{\eta}}\left(r,f\right)
\leq N(r,f)+N(r,f(z+\eta))
\end{equation*}
~\\
and Theorem \ref{T:limit-counting} guarantees that we can find $0<\delta'(r)<\delta(r)$ such that
~\\
\begin{equation}\label{E:thm4.4-step-3}
N\left(r,\Delta_{\eta}f\right)-N(r,f)\leq \widetilde{N}_{\Delta_{\eta}}\big(r,\,f\big)
+\varepsilon_{1}(r)
\end{equation}
~\\
whenever $0<|\eta|< \delta'(r)$. Combining \eqref{E:thm4.4-step-1},  \eqref{E:thm4.4-step-2} and \eqref{E:thm4.4-step-3} we deduce
\begin{equation*}
(p-1)\,T(r,\, f)\leq \widetilde{N}_{\Delta_{\eta}}\left(r,\,f\right)+\sum_{k=1}^{p}\widetilde{N}_{\Delta_{\eta}}\big(r,\, 1/(f-a_{k})\big)+\widehat{\varepsilon}(r)
\end{equation*}
~\\
whenever $0<|\eta|< \delta'(r)$, where $|\widehat{\varepsilon}(r)| \leq n\left(0,\, f(z)\right)\log r+\gamma$.
\end{proof}

\subsection{Defect relation and little Picard's theorem for varying-steps difference operator}
We define the \textit{multiplicity index} and \textit{ramification index} for varying-steps difference operator with vanishing period to be
\begin{equation*}
\vartheta_{\Delta_{\eta}}(a,\, f):=\lim\inf_{r\rightarrow \infty}\frac{N_{\Delta_{\eta}}\big(r,\,1/(f-a)\big)}{T(r,\,f)}
\end{equation*}
and
\begin{equation*}
\Theta_{\Delta_{\eta}}(a,\, f)
:=1-\lim\sup_{r\rightarrow\infty}\frac{\widetilde{N}_{\Delta_{\eta}}\big(r,\,1/(f-a)\big)}{T(r,\, f)}.
\end{equation*}
~\\
Then the Theorem \ref{T:second main-2} implies the following corollary immediately.
~\\
\begin{corollary}
\label{T:defect}
Let $f(z)$ be a transcendental meromorphic function such that
~\\
$\Delta_{\eta}f\not\equiv 0$. Then
\begin{equation*}
\sum_{a\in \widehat{\mathbb{C}}}\left(\delta(a,f)+\vartheta_{\Delta_{\eta}}(a,f)\right)
\leq \sum_{a\in \widehat{\mathbb{C}}}\Theta_{\Delta_{\eta}}(a,f)\leq 2.
\end{equation*}

\end{corollary}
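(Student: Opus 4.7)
The plan is to derive Corollary \ref{T:defect} as a formal consequence of Theorem \ref{T:second main-2} combined with Nevanlinna's first fundamental theorem and standard $\liminf/\limsup$ inequalities. Throughout, one should fix a choice $\eta=\eta(r)$ with $0<|\eta(r)|<\delta'(r)$ for every sufficiently large $r$, so that Theorem \ref{T:second main-2} applies and the counting functions $N_{\Delta_{\eta}}(r,\cdot)$, $\widetilde{N}_{\Delta_{\eta}}(r,\cdot)$ become unambiguous functions of $r$. The argument then splits naturally into the two asserted inequalities.

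\textbf{Step 1 (the left inequality $\sum(\delta+\vartheta_{\Delta_{\eta}})\leq\sum\Theta_{\Delta_{\eta}}$).} For each $a\in\widehat{\mathbb{C}}$, the first fundamental theorem gives $m(r,1/(f-a))+N(r,1/(f-a))=T(r,f)+O(1)$ (with the usual convention for $a=\infty$). Using the decomposition $N=N_{\Delta_{\eta}}+\widetilde{N}_{\Delta_{\eta}}$ from Definition \ref{D:counting function}, dividing by $T(r,f)$, and rearranging yields
$$\frac{m(r,1/(f-a))}{T(r,f)}+\frac{N_{\Delta_{\eta}}(r,1/(f-a))}{T(r,f)}=1-\frac{\widetilde{N}_{\Delta_{\eta}}(r,1/(f-a))}{T(r,f)}+o(1).$$
Taking $\liminf$ on both sides, the right-hand side equals $1-\limsup\widetilde{N}_{\Delta_{\eta}}/T=\Theta_{\Delta_{\eta}}(a,f)$, while the left-hand side is bounded below by $\delta(a,f)+\vartheta_{\Delta_{\eta}}(a,f)$ via $\liminf(A+B)\geq\liminf A+\liminf B$. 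Summation over $a$ (only countably many terms being nonzero) then gives the first inequality.

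\textbf{Step 2 (the right inequality $\sum\Theta_{\Delta_{\eta}}\leq 2$).} Fix any finite set of distinct finite targets $\{a_1,\ldots,a_p\}\subset\mathbb{C}$; the point $\infty$ is handled through the $\widetilde{N}_{\Delta_{\eta}}(r,f)$ term. Theorem \ref{T:second main-2} asserts, for all large $r$,
$$(p-1)T(r,f)\leq\widetilde{N}_{\Delta_{\eta}}(r,f)+\sum_{k=1}^{p}\widetilde{N}_{\Delta_{\eta}}(r,1/(f-a_k))+\widehat{\varepsilon}(r),$$
with $|\widehat{\varepsilon}(r)|\leq n(0,f)\log r+\gamma$. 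Since $f$ is transcendental, $T(r,f)/\log r\to\infty$, so $\widehat{\varepsilon}(r)/T(r,f)=o(1)$. Dividing by $T(r,f)$, taking $\limsup_{r\to\infty}$ of both sides, and using the subadditivity $\limsup(A+B)\leq\limsup A+\limsup B$ gives
$$p-1\leq\limsup_{r\to\infty}\frac{\widetilde{N}_{\Delta_{\eta}}(r,f)}{T(r,f)}+\sum_{k=1}^{p}\limsup_{r\to\infty}\frac{\widetilde{N}_{\Delta_{\eta}}(r,1/(f-a_k))}{T(r,f)}.$$
Subtracting this chain from $p+1$ yields $\Theta_{\Delta_{\eta}}(\infty,f)+\sum_{k=1}^{p}\Theta_{\Delta_{\eta}}(a_k,f)\leq 2$. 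Since the bound is uniform in the choice of finite subset of $\widehat{\mathbb{C}}$, the same bound holds for the full (at-most-countable) sum, completing the proof.

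\textbf{Main obstacle.} Nearly all of the analytic content has already been built into Theorem \ref{T:second main-2}, so the corollary itself is a formal derivation, paralleling Nevanlinna's classical passage from the Second Main Theorem to the defect relation. The only genuinely non-formal point is absorbing the error $\widehat{\varepsilon}(r)=O(\log r)$ into $o(T(r,f))$, which relies on transcendence of $f$. A secondary bookkeeping concern, worth flagging explicitly at the outset of the proof, is the choice of $\eta=\eta(r)$ satisfying $0<|\eta(r)|<\delta'(r)$ uniformly in large $r$, without which the defect quantities $\vartheta_{\Delta_{\eta}}$ and $\Theta_{\Delta_{\eta}}$ are not well-defined as limits in $r$ alone.
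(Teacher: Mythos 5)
Your proof is correct and is exactly the standard Nevanlinna passage from a Second Main Theorem to a defect relation; the paper itself gives no proof, asserting only that the corollary ``follows immediately'' from Theorem~\ref{T:second main-2}, and your Steps 1--2 supply precisely the expected argument (first fundamental theorem plus the decomposition $N=N_{\Delta_\eta}+\widetilde N_{\Delta_\eta}$ for the left inequality; dividing the SMT by $T(r,f)$, absorbing $\widehat\varepsilon(r)=O(\log r)$ into $o(T(r,f))$ via transcendence, and taking $\limsup$ for the right inequality). Your remark about fixing a selection $\eta=\eta(r)$ with $0<|\eta(r)|<\delta'(r)$ is a genuine and worthwhile observation: the paper's definitions of $\vartheta_{\Delta_\eta}$ and $\Theta_{\Delta_\eta}$ involve a limit in $r$ of quantities that depend on both $r$ and $\eta$, and since $\delta'(r)\to 0$ no single fixed $\eta$ makes Theorem~\ref{T:second main-2} applicable for all large $r$, so the implicit $\eta(r)$-selection you flag is in fact needed for the corollary's statement to be well-posed, even though the paper leaves this tacit.
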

~\\
~\\
Next, we shall define Picard exceptional values for varying-steps difference operator with vanishing period.
~\\
\begin{definition}
We call $a\in\widehat{\mathbb{C}}$ is a \textit{Picard exceptional value for varying-steps difference operator with vanishing period} of $f(z)$
if {there is a sequence $\eta_n\to 0$ as $n\to\infty$ such that} $\widetilde{N}_{\Delta_{\eta_{n}}}\left(r,1/(f-a)\right)=O(1)$.
\end{definition}
~\\
We have the following Picard theorem for varying-steps difference operator with vanishing period.
\begin{theorem}\label{T:vanishing-picard}
Let $f(z)$ be a meromorphic function having three Picard exceptional values for varying-steps difference operator with vanishing period. Then $f(z)$ is a constant.
\end{theorem}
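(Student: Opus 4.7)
The plan is to establish in two steps that a meromorphic function with three vanishing-period Picard exceptional values must be constant: first, that each such Picard exceptional value is in fact omitted by $f$ on $\mathbb{C}$, and second, that applying the vanishing-period second main theorem (Theorem \ref{T:second main-2}) with three omitted values forces $T(r, f) = O(1)$.

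For the first step, suppose $a \in \widehat{\mathbb{C}}$ is Picard exceptional with an associated sequence $\eta_n \to 0$ satisfying $\widetilde{N}_{\Delta_{\eta_n}}(r, 1/(f-a)) = O(1)$, and assume, for contradiction, that $f$ attains $a$ at some point of $\mathbb{C}$. The $a$-points of $f$ then form a nonempty discrete set, so for each fixed $r$ there are only finitely many in $\overline{D(0, r+1)}$ and the minimum pairwise distance among them is some $d_r > 0$. Choosing $n$ large enough that $|\eta_n| < \min(d_r,\, 1)$, one verifies that at every $a$-point $z_0 \in \overline{D(0, r)}$ the translate $z_0 + \eta_n$ cannot be another $a$-point (any other $a$-point in $\overline{D(0, r+1)}$ is farther than $d_r$, and any $a$-point outside is farther than $1$). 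Hence $\Delta_{\eta_n} f(z_0) \neq 0$, so $n_{\Delta_{\eta_n}}(r, 1/(f-a)) = 0$ and $\widetilde{N}_{\Delta_{\eta_n}}(r, 1/(f-a)) = N(r, 1/(f-a))$. Since $N(r, 1/(f-a))$ grows at least like $\log r$ as $r \to \infty$ whenever $f$ has any $a$-point in $\mathbb{C}$, this contradicts the $O(1)$ hypothesis. The case $a = \infty$ is handled analogously by passing to $1/f$.

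For the second step, let $a_1, a_2, a_3 \in \widehat{\mathbb{C}}$ be the three Picard exceptional values, all omitted by $f$ on $\mathbb{C}$ by the first step. After translating the origin (which preserves both constancy of $f$ and the set of Picard exceptional values), we may assume $n(0,f) = 0$, so the error term in Theorem \ref{T:second main-2} satisfies $\widehat{\varepsilon}(r) = O(1)$. If all three $a_k$ are finite, apply Theorem \ref{T:second main-2} with $p=3$ and any $\eta$ with $0 < |\eta| < \delta'(r)$; since $\widetilde{N}_{\Delta_\eta}(r, 1/(f-a_k)) \leq N(r, 1/(f-a_k)) = O(1)$ for each $k$ and $\widetilde{N}_{\Delta_\eta}(r, f) \leq N(r, f) \leq T(r, f)$, we obtain $2T(r, f) \leq T(r, f) + O(1)$, i.e., $T(r, f) = O(1)$. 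If $a_1 = \infty$, then $N(r, f) = 0$, so $\widetilde{N}_{\Delta_\eta}(r, f) = 0$, and Theorem \ref{T:second main-2} with $p = 2$ on the two finite Picard exceptional values again yields $T(r, f) = O(1)$. In either case $f$ must be constant.

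The main difficulty is the first step, where one converts a hypothesis along a sequence $\eta_n \to 0$ into an unconditional bound on $N(r, 1/(f-a))$. The combinatorial heart is the choice of $|\eta_n|$ smaller than the minimum gap between distinct $a$-points in the disk of interest, so that no $a$-point pairs with another under $\eta_n$-translation and consequently every $a$-point contributes fully to $\widetilde{N}_{\Delta_{\eta_n}}$; the $O(1)$ hypothesis then closes to force $a$ to be omitted. The second step is a routine application of Theorem \ref{T:second main-2} together with the elementary bound $\widetilde{N}_{\Delta_\eta}(r, f) \leq T(r, f)$.
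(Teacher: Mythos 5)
Your proof is correct and takes a genuinely different route from the paper's. The key new idea is your Step~1: establishing that a vanishing-period Picard exceptional value must actually be an omitted value of $f$, via the gap argument that for each $r$ the $a$-points in $\overline{D(0,\,r+1)}$ form a finite set with positive minimum separation $d_r$, so for $|\eta_n|<\min(d_r,1)$ no $a$-point in $\overline{D(0,r)}$ is paired with another under translation by $\eta_n$, giving $N_{\Delta_{\eta_n}}(r,1/(f-a))=0$ and hence $N(r,1/(f-a))=\widetilde{N}_{\Delta_{\eta_n}}(r,1/(f-a))=O(1)$, which forces $f$ to omit $a$. The paper does not isolate this fact: it plugs the hypothesis directly into Theorem~\ref{T:second main-2} in the transcendental case; since that theorem needs $\Delta_\eta f\not\equiv 0$, the resulting dichotomy forces it to treat the degenerate case $\Delta_{\eta_n}f\equiv 0$ separately via a non-isolated-singularity argument for poles and the Identity Theorem for $\alpha$-points, and the remaining rational case is closed with an unexplained assertion. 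Your Step~1 avoids most of that case analysis and also supplies the missing justification for the rational case (a rational function omitting three values of $\widehat{\mathbb{C}}$ is constant), so your argument is arguably tighter. Two small points to shore up: (i) in Step~2 you invoke Theorem~\ref{T:second main-2}, which carries the standing hypothesis $\Delta_\eta f\not\equiv 0$ on $D(0,r)$; you should note that for nonconstant $f$ suitable $\eta$ with $0<|\eta|<\delta'(r)$ and $\Delta_\eta f\not\equiv 0$ always exist, since a nonconstant meromorphic function cannot satisfy $\Delta_\eta f\equiv 0$ for arbitrarily small $\eta$ (Identity Theorem); (ii) your parenthetical claim that a shift of origin preserves the set of Picard exceptional values is not obvious as stated, but you do not actually need it — after Step~1 you only need that omission is preserved under shifts, which is immediate.
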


\begin{proof}
Without loss of generality, we may assume that the three exceptional values to be $0$, $1$ and $\infty$. According to Theorem \ref{T:second main-2}, we can find a $\delta'(r)>0$ such that
\begin{equation*}
T(r,f)\leq \widetilde{N}_{\Delta_{\eta}}\big(r,\,f\big)+\widetilde{N}_{\Delta_{\eta}}\big(r,\, 1/f\big) +\widetilde{N}_{\Delta_{\eta}}\big(r,\,1/(f-1)\big)+\widehat{\varepsilon}(r)
\end{equation*}
whenever $0<|\eta|< \delta'(r)$, where $|\widehat{\varepsilon}(r)| \leq n\left(0,\, f(z)\right)\log r+\gamma$, here $\gamma$ is a bounded constant.
~\\
~\\
If $f(z)$ is a transcendental meromorphic function, then
\begin{equation*}
\lim\limits_{r\rightarrow\infty} \frac{T(r,f)}{\log r}=\infty.
\end{equation*}
Thus, there exists $r_{0}>0$ such that
\begin{equation*}
	T(r,\, f)>2\, \widehat{\varepsilon}(r)
\end{equation*}
whenever $r\geq r_{0}$. For each $ r\geq r_{0}$, since $\lim\limits_{n\rightarrow\infty} \eta_{n}=0\ (\eta_{n}\neq 0)$, so there exists $N(r)>0$ such that
$0<|\eta_{n}|< \delta'(r)$ whenever $n>N(r)$.
Note that
\begin{equation*}
\widetilde{N}_{\Delta_{\eta_{n}}}\left(r,\, f\right)=\widetilde{N}_{\Delta_{\eta_{n}}}\left(r\, ,1/f\right)
=\widetilde{N}_{\Delta_{\eta_{n}}}\big(r,\,1/(f-1)\big)=0
\end{equation*}
whenever $n>N(r)$. Thus, $T(r,f)\leq \widehat{\varepsilon}(r)$, which is a contradiction.

Hence, $\Delta_{\eta_{n}}f\equiv0$ on $\{z:|z|\leq r\}$ whenever $n>N(r)$.

We claim that $f(z)$ is an entire function. {For otherwise, there exists $z_{1}$ such that $f(z_{1})=\infty$, which implies that $1/f(z_{1}+\eta_{n})=1/f(z_{1})=0$ whenever $n>N(r_{1})$, where $r_{1}\geq \max\{r_{0},|z_{1}|\}$.
Note that $\lim\limits_{n\rightarrow\infty} \eta_{n}=0\ (\eta_{n}\neq 0)$, then $z_{1}$ is a non-isolated zero, which is a contradiction.} So $f(z)$ must be an entire function.

Moreover, we have $f(\eta_{n})=f(0)$ whenever $n>N(r_{0})$. By the Identity Theorem, we deduce that $f(z)\equiv f(0)$ on $\mathbb{C}$, which is impossible according to the assumption $f(z)$ being transcendental.

Therefore, $f(z)$ is a rational function, which must reduce to a constant.

\end{proof}

\section{Nevanlinna theory for difference operator with infinite period}\label{S:Nevanlinna theory-2}
When considering analogous Picard-exceptional values for varying steps operators with infinite periods, the Second Main Theorem that we state next allows $\omega$ to vary within the upper bound $r^\beta$. However, we need to further restrict $\omega$ to $r^{\beta/4}<|\omega|<r^{\beta}$ if $f(z)$ is of finite positive order, to $\log^{\frac{1}{8}} r<|\omega|<\log^{\frac{1}{2}} r$ if $f(z)$ is order zero.

\begin{definition}
\label{D: difference operator'}
Let $f(z)$ be a meromorphic function of finite order $\sigma$ and $r=|z|$. If $\sigma>0$, then we define \textit{varying-steps difference operator} by $\Delta_{\omega} f:= f(z+\omega)-f(z)$, where {$0<|\omega|<r^{\beta}$}, $0<\beta<\min\{1,\sigma\}$. If $\sigma=0$, then we define $\Delta_\omega f:= f(z+\omega)-f(z)$, where $0<|\omega|<\log^{\frac{1}{2}} r$ for $r>1$, and $0<|\omega|<1$ for $r\leq 1$.
\end{definition}
\smallskip

Based on this definition, we have the following versions of second main theorems.
\begin{theorem}
\label{T:second main-1-1'}
Let $f(z)$ be a meromorphic function of finite order $\sigma$ such that $\Delta_{\omega}f\not\equiv 0$. Let $a_{1},\cdots, a_{p}$ be $p\geq 2$ distinct points in $\mathbb{C}$. Then
\begin{equation}
\label{E:second main-1-2'}
m(r,\,f)+\sum_{j=1}^{p}m\big(r,\,1/(f-a_{j})\big)\leq 2 T(r,\,f)-N_{\Delta_{\omega}}(r,\, f)+o\big(T(r,\,f)\big)
+O(\log r)
\end{equation}
outside a set of finite logarithmic measure,
where
\begin{equation}
N_{\Delta_{\omega}}(r,\, f):=2N(r,\,f)-N(r,\,\Delta_{\omega}f)+N\big(r,\,1/\Delta_{\omega}f\big).
\end{equation}
\end{theorem}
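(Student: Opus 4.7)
The proof I envisage is an infinite-period analogue of Theorem \ref{T:second main-1-1}, obtained by replacing each application of Theorem \ref{T:limit-discrete-quotient} by Theorem \ref{T:limit-discrete-quotient'}, and book-keeping the exceptional set of finite logarithmic measure carefully. First, I would fix $\varepsilon$ in the range $0<\varepsilon<(1-\beta)/(2-\beta)$, which makes the exponent $\sigma-(1-\beta)(1-\varepsilon)+\varepsilon$ strictly less than $\sigma$, so that Theorem \ref{T:limit-discrete-quotient'} delivers
$$
 m\Big(r,\,\frac{\Delta_\omega f}{f}\Big)= O\big(r^{\sigma-(1-\beta)(1-\varepsilon)+\varepsilon}\big)=o\big(T(r,f)\big)+O(\log r)
$$
for $r$ outside a set $E$ of finite logarithmic measure; this is the key input that replaces the $m(r,f(z+\eta)/f(z))\le 1$ step from the vanishing period proof.

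Next, setting $P(f)=\prod_{k=1}^{p}(f-a_k)$, the standard Nevanlinna identity gives $\sum_{k=1}^{p}m\bigl(r,1/(f-a_k)\bigr)=m(r,1/P(f))+\gamma_1$, while summing $m(r,\Delta_\omega f/(f-a_k))\le m(r,\Delta_\omega f/f)+O(1)$ over $k$ yields
$$
 m\Big(r,\,\frac{\Delta_\omega f}{P(f)}\Big)\le \sum_{k=1}^{p} m\Big(r,\,\frac{\Delta_\omega f}{f-a_k}\Big)= o\big(T(r,f)\big)+O(\log r)
$$
for $r\notin E$. Combining these two identities,
$$
 \sum_{k=1}^{p}m\Big(r,\,\frac{1}{f-a_k}\Big)\le m\Big(r,\,\frac{1}{\Delta_\omega f}\Big)+o\big(T(r,f)\big)+O(\log r).
$$

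Third, I would invoke Nevanlinna's first fundamental theorem $T(r,1/\Delta_\omega f)=T(r,\Delta_\omega f)+O(1)$, and then bound
$$
 T(r,\Delta_\omega f)= m(r,\Delta_\omega f)+N(r,\Delta_\omega f)\le m(r,f)+m\Big(r,\frac{\Delta_\omega f}{f}\Big)+N(r,\Delta_\omega f),
$$
so that
$$
 m\Big(r,\,\frac{1}{\Delta_\omega f}\Big)\le m(r,f)+N(r,\Delta_\omega f)-N\Big(r,\,\frac{1}{\Delta_\omega f}\Big)+o\big(T(r,f)\big)+O(\log r).
$$
Adding $m(r,f)$ to both sides and rewriting $2m(r,f)=2T(r,f)-2N(r,f)$ yields precisely
$$
 m(r,f)+\sum_{k=1}^{p}m\Big(r,\,\frac{1}{f-a_k}\Big)\le 2T(r,f)-N_{\Delta_\omega}(r,f)+o\big(T(r,f)\big)+O(\log r),
$$
valid for all $r$ outside the same set $E$ of finite logarithmic measure.

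The main obstacle, compared with the vanishing-period argument, is that the difference logarithmic derivative error is no longer $O(1)$ for each fixed $r$ but a bona fide polynomial-growth error $O(r^{\sigma-(1-\beta)(1-\varepsilon)+\varepsilon})$ valid only off an exceptional set. The delicate point is to verify the strict inequality $(1-\beta)(1-\varepsilon)>\varepsilon$, under the hypothesis $0<\varepsilon<(1-\beta)/(2-\beta)$, so that this error is genuinely $o(T(r,f))$ whenever $f$ is transcendental of order $\sigma>0$, and to ensure that the single exceptional set of finite logarithmic measure for $m(r,\Delta_\omega f/f)$ suffices throughout; the $O(\log r)$ term absorbs the usual first-main-theorem constants and the rational case $\sigma=0$ handled by the separate growth condition in Definition \ref{D: difference operator'}.
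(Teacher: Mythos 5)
The algebraic skeleton of your argument—passing from $\sum_k m(r,1/(f-a_k))$ to $m(r,1/P(f))$, factoring through $m(r,\Delta_\omega f/P(f))+m(r,1/\Delta_\omega f)$, invoking the first main theorem, and bounding $m(r,\Delta_\omega f)\le m(r,f)+m(r,\Delta_\omega f / f)$—is exactly the right transcription of the vanishing-period proof, and that part is sound (with the small correction that $m(r,\Delta_\omega f/(f-a_k))$ should be bounded by applying the logarithmic difference estimate to $f-a_k$ itself, noting $\Delta_\omega f=\Delta_\omega(f-a_k)$, rather than by trying to relate it directly to $m(r,\Delta_\omega f/f)$; a partial-fraction decomposition of $1/P(f)$ then gives the bound on $m(r,\Delta_\omega f/P(f))$).

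The real gap is the way you obtain the $o\big(T(r,f)\big)$ error. You invoke Theorem \ref{T:limit-discrete-quotient'} to get $m(r,\Delta_\omega f/f)=O\big(r^{\sigma-\delta}\big)$ for some $\delta>0$, and then assert that this is $o\big(T(r,f)\big)$ because $f$ has order $\sigma$. That implication is \emph{not} valid: order $\sigma$ only controls the $\limsup$, so there may be a set of $r$, of \emph{infinite} logarithmic measure, on which $T(r,f)$ dips down toward $r^{\mu}$ where $\mu$ is the lower order. If $\mu<\sigma-\delta$ this happens, and on that set $r^{\sigma-\delta}/T(r,f)\to\infty$ rather than $0$; the exceptional set cannot be salvaged to have finite logarithmic measure in general. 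The cure is to \emph{not} pass to the polynomial-in-$r$ bound at this stage. Instead, retain the Chiang--Feng-type intermediate estimate in terms of $T$ itself, of the shape $m\big(r,f(z+\omega)/f(z)\big)=O\big(T(2r,f)^{1-\varepsilon}/r^{\varepsilon}\big)$ (valid uniformly in $|\omega|<r^\beta$), and then apply the Borel-type growth lemma $T(2r,f)=O\big(T(r,f)\big)$ outside a set of finite logarithmic measure; since $\big(rT(r,f)\big)^{\varepsilon}\to\infty$, this yields $m(r,\Delta_\omega f/f)=o\big(T(r,f)\big)$ off that exceptional set—which is also where the stated theorem's exceptional set comes from—and your algebraic argument then completes the proof.
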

\smallskip

\begin{theorem}
\label{T:second main-2'}
Let $f(z)$ be a meromorphic function of finite order $\sigma$ such that $\Delta_{\omega}f\not\equiv 0$. Let $a_{1},\cdots, a_{p}$ be $p\geq 2$ distinct points in $\mathbb{C}$. Then
\begin{equation}
(p-1)\,T(r,\,f)\leq \widetilde{N}_{\Delta_{\omega}}\left(r,\,f\right)+\sum_{j=1}^{p}\widetilde{N}_{\Delta_{\omega}}\big(r,\, 1/(f-a_{j})\big)
+o\big(T(r,\,f)\big)+O(\log r)
\end{equation}
outside a set of finite logarithmic measure.
\end{theorem}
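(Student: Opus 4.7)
The plan is to parallel the proof of Theorem~\ref{T:second main-2} almost verbatim, replacing the vanishing-period inputs by their infinite-period counterparts. Specifically, I will start from Theorem~\ref{T:second main-1-1'} in place of Theorem~\ref{T:second main-1-1}, and use Theorem~\ref{T:limit-counting'} in place of Theorem~\ref{T:limit-counting} to control how the pole counting function changes under the shift $z\mapsto z+\omega$.

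The concrete steps, in order, are the following. First, apply Theorem~\ref{T:second main-1-1'} to bound the proximity sum $m(r,f)+\sum_{j=1}^p m(r,1/(f-a_j))$ by $2T(r,f)-N_{\Delta_\omega}(r,f)+o(T(r,f))+O(\log r)$ outside a set $E_1$ of finite logarithmic measure. Next, add $\sum_j N(r,1/(f-a_j))$ to both sides and invoke Nevanlinna's first main theorem to convert the left-hand side to $(p-1)T(r,f)+O(1)$, so the inequality becomes
\begin{equation*}
(p-1)T(r,f)\le \sum_{j=1}^p N\bigl(r,1/(f-a_j)\bigr)+N(r,\Delta_\omega f)-N\bigl(r,1/\Delta_\omega f\bigr)-N(r,f)+o(T(r,f))+O(\log r).
\end{equation*}
Then observe, as in the vanishing-period case, that a common zero of $f-a_j$ and $\Delta_\omega f$ is a zero of $\Delta_\omega f$, whence $\sum_j N_{\Delta_\omega}(r,1/(f-a_j))\le N(r,1/\Delta_\omega f)$; by Definition~\ref{D:counting function} this gives
\begin{equation*}
\sum_{j=1}^p N\bigl(r,1/(f-a_j)\bigr)-N\bigl(r,1/\Delta_\omega f\bigr)\le \sum_{j=1}^p \widetilde{N}_{\Delta_\omega}\bigl(r,1/(f-a_j)\bigr).
\end{equation*}
For the pole contribution, the local multiplicity analysis at a common pole of $f(z)$ and $f(z+\omega)$ is formally identical to the one in the proof of Theorem~\ref{T:second main-2}, yielding
$N(r,\Delta_\omega f)+N_{\Delta_\omega}(r,f)\le N(r,f)+N(r,f(z+\omega))$.
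Combining this with the estimate $N(r,f(z+\omega))-N(r,f)=o(T(r,f))+O(\log r)$ furnished by Theorem~\ref{T:limit-counting'} outside a second set $E_2$ of finite logarithmic measure produces $N(r,\Delta_\omega f)-N(r,f)\le \widetilde{N}_{\Delta_\omega}(r,f)+o(T(r,f))+O(\log r)$. Assembling these pieces outside $E_1\cup E_2$, which remains of finite logarithmic measure, gives the claimed bound.

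The main obstacle is bookkeeping: I must verify that the three different error terms appearing in Theorem~\ref{T:limit-counting'}, namely $O(r^{\sigma-(1-\beta)+\varepsilon})$ when $\sigma\ge 1$, $O(r^\beta)$ when $0<\sigma<1$, and $O(\log r)$ when $\sigma=0$, can all be folded into the required $o(T(r,f))+O(\log r)$ remainder. This uses the definition of finite order together with the restriction $\beta<1$ (respectively $\beta<\sigma$) imposed in Definition~\ref{D: difference operator'}, and the freedom to enlarge the exceptional set by a further set of finite logarithmic measure where $T(r,f)$ is abnormally small. Once this bookkeeping is done, the rest of the argument is routine.
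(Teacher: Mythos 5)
Your proposal reconstructs exactly what the paper intends: since the paper states Theorem~\ref{T:second main-2'} without supplying a proof, the natural reading is that it follows the vanishing-period argument (Theorem~\ref{T:second main-2}) line by line, replacing Theorems~\ref{T:second main-1-1} and \ref{T:limit-counting} by their infinite-period twins \ref{T:second main-1-1'} and \ref{T:limit-counting'}. That is precisely what you have done, and the five steps you lay out --- invoking Theorem~\ref{T:second main-1-1'}, adding $\sum_j N(r,1/(f-a_j))$ and using the first main theorem, the inequality $\sum_j N_{\Delta_\omega}(r,1/(f-a_j))\le N(r,1/\Delta_\omega f)$ from Definition~\ref{D:counting function}, the local pole-multiplicity comparison giving $N(r,\Delta_\omega f)+N_{\Delta_\omega}(r,f)\le N(r,f)+N(r,f(z+\omega))$, and the pole-shift estimate from Theorem~\ref{T:limit-counting'} --- match the corresponding steps \eqref{E:thm4.4-step-1}, \eqref{E:thm4.4-step-2}, \eqref{E:thm4.4-step-3} in the paper's proof of Theorem~\ref{T:second main-2}.

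One caveat worth flagging, which you have sensibly identified yourself as ``the main obstacle'': Theorems~\ref{T:limit-discrete-quotient'} and \ref{T:limit-counting'} deliver bounds of the form $O(r^{\gamma})$ with an exponent $\gamma$ strictly smaller than $\sigma$, and the theorem to be proved records the error as $o(T(r,f))+O(\log r)$. Turning $O(r^{\gamma})$ with $\gamma<\sigma$ into $o(T(r,f))$ outside a further exceptional set is not entirely automatic when the lower order of $f$ is below $\gamma$, and your remark about ``enlarging the exceptional set by a further set of finite logarithmic measure where $T(r,f)$ is abnormally small'' is not obviously justified without invoking the standard convexity of $T$ in $\log r$ or a Borel-type growth lemma. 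However, this imprecision is inherited from the paper's own statement of Theorem~\ref{T:second main-1-1'} (which already carries the $o(T(r,f))$ error), and it does not affect the intended application in Theorem~\ref{T:infinite-picard}, where one only needs the inequality to fail along a suitable unbounded sequence of $r$. So the gap, to the extent it exists, is one the paper shares; within that convention your argument is correct.
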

\smallskip

An analogue of Picard exceptional values is defined as follows.

\begin{definition}
Let $f(z)$ be a meromorphic function of finite order $\sigma$, we call $a\in\widehat{\mathbb{C}}$ to be a \textit{Picard exceptional value for a varying-steps difference operator with infinite period} if $\widetilde{N}_{\Delta_{\omega}}\big(r,\,1/(f-a)\big)=O(1)$:

	\begin{enumerate}
		\item[i)] if $\sigma>0$,
			\begin{enumerate}
			\item[(a)] $|\omega|=|z|^{\frac{\beta}{2}}$, $0<\beta<\min\{1,\sigma\}$, for $|z|>1$, and $|z+\omega|< |z|-|z|^{\frac{\beta}{4}}$,
				\item[(b)] $|\omega|={|z|}/{2}$ for $|z|\leq 1$, and $|z+\omega|< \frac{3}{4}|z|$;
			\end{enumerate}	
		\item[ii)] if  $\sigma=0$,
			\begin{enumerate}
				\item[(a)]  $|\omega|=\log^{\frac{1}{4}}|z|$ for $|z|>1$, and $|z+\omega|< |z|-\log^{\frac{1}{8}}|z|$,
				\item[(b)] $|\omega|={|z|}/{2}$ for $|z|\leq 1$, and $|z+\omega|< \frac{3}{4}|z|$.
			\end{enumerate}
		\end{enumerate}
\end{definition}
\smallskip
\begin{center}
\begin{tikzpicture}[scale=0.5]
\draw[->] (0,0)--(12,0);
\draw[->] (6,-6)--(6,6);
\draw[style=dashed] (6,0) circle (5.0);
\draw[fill=black] (11,5.5) circle (0.05) node[above right] {$z_{0}$};
\path[draw] (6,0)--(10,3) node[above right]{$r_{0}=|z_{0}|-|z_{0}|^{\frac{\beta}{4}}$};
\draw[fill=black] (8,3.5) circle (0.05) node[above left] {$z_{1}:=z_{0}+\omega_{0}$};
\draw[style=dashed] (6,0) circle (3.0);
\path[draw] (6,0)--(4,2.236) node[above left]{$r_{1}=|z_{1}|-|z_{1}|^{\frac{\beta}{4}}$};
\draw[fill=black] (5,2.3) circle (0.05) node[below right] {$z_{2}:=z_{1}+\omega_{1}$};
\end{tikzpicture}
\begin{figure}[htp]
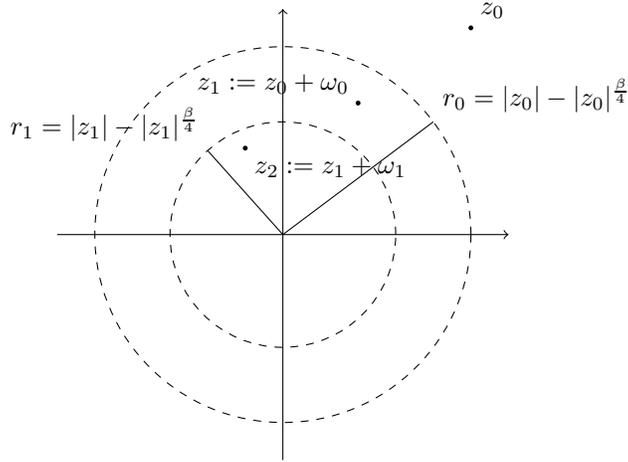

\caption{This figure shows the locations of three successive points which lie on the preimage of a \textit{Picard exceptional value for a varying-steps difference operator with infinite period}, where $z_{n}\rightarrow 0$ as $n\rightarrow \infty$.}
\end{figure}
\end{center}
Then we have the following Picard theorem for difference operator with infinite period.
\medskip

\begin{theorem}\label{T:infinite-picard}

Let $f(z)$ be a meromorphic function of finite order $\sigma$. Suppose $f$  has three Picard exceptional values with respect to a varying-steps difference operator with infinite period. Then $f(z)$ is a constant.
\end{theorem}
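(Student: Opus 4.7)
The strategy mirrors that of Theorem~\ref{T:vanishing-picard}, with the vanishing-period Second Main Theorem replaced by Theorem~\ref{T:second main-2'} and with an additional geometric iteration at the end to dispose of the remaining rational case. Assume without loss of generality that the three exceptional values are $0$, $1$, $\infty$. Applying Theorem~\ref{T:second main-2'} with $p=3$ gives
$$
2\,T(r,f)\le \widetilde{N}_{\Delta_{\omega}}(r,f)+\widetilde{N}_{\Delta_{\omega}}(r,1/f)+\widetilde{N}_{\Delta_{\omega}}(r,1/(f-1))+o(T(r,f))+O(\log r)
$$
outside a set $E$ of finite logarithmic measure. The Picard exceptional hypothesis makes each $\widetilde{N}$-term $O(1)$, whence $(2-o(1))\,T(r,f)=O(\log r)$ off $E$. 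A standard monotonicity argument, using that $T(r,f)$ is nondecreasing and that $\int_E dr/r<\infty$ means $E$ contains no interval of the form $[r, r e^{\delta}]$ for large $r$, upgrades the bound to all sufficiently large $r$, so $f$ must be rational.

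Next I would show that a non-constant rational $f$ cannot carry three such Picard exceptional values. If none of $a_1,a_2,a_3$ is attained, then $f$ omits three values and the classical little Picard theorem, or simply the fact that a non-constant rational takes every value except possibly one, forces $f$ to be constant. Otherwise some $a\in\{0,1,\infty\}$ is attained at, say, $z_0$, and writing $d_a\ge 1$ for the (finite) number of its $a$-points one has $N(r,1/(f-a))=d_a\log r+O(1)$; combined with $\widetilde{N}_{\Delta_{\omega}}(r,1/(f-a))=O(1)$ this forces $n_{\Delta_{\omega}}(r,1/(f-a))=d_a$ for all sufficiently large $r$, i.e., every $a$-point $z$ of $f$ carries an admissible shift $\omega(z)$ of the prescribed modulus with $f(z+\omega(z))=a$ and either $|z+\omega(z)|<|z|-|z|^{\beta/4}$ (when $|z|>1$) or $|z+\omega(z)|<\tfrac{3}{4}|z|$ (when $|z|\le 1$). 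Iterating $z_{n+1}=z_n+\omega(z_n)$ as in Figure~1 produces a sequence of $a$-points whose moduli are strictly decreasing, dropping by at least $|z_n|^{\beta/4}\ge 1$ per step while $|z_n|>1$ and then contracting geometrically towards $0$. The $z_n$ are therefore pairwise distinct, contradicting the finiteness of $d_a$. Hence $f$ is constant.

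The principal obstacle is the pointwise conversion carried out in the second step: translating the integrated identity $\widetilde{N}_{\Delta_{\omega}}(r,1/(f-a))=O(1)$ into the statement that \emph{every} sufficiently remote $a$-point admits an admissible shift producing a further $a$-point. This requires careful multiplicity accounting in Definition~\ref{D:counting function}, so that a single missed iteration step would force $n_{\Delta_{\omega}}(r,1/(f-a))$ to stay strictly below $d_a$ on a set of positive logarithmic measure and thereby destroy the $O(1)$ bound. Checking that the resulting $\omega(z)$ actually lies within the admissible range of Definition~\ref{D: difference operator'} and of the figure is then a routine geometric verification, as the freedom in the argument of $\omega(z)$ is enough to land $z+\omega(z)$ in the disk $\{|w|<|z|-|z|^{\beta/4}\}$ or $\{|w|<\tfrac34|z|\}$ as appropriate.
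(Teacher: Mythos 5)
Your proof applies Theorem~\ref{T:second main-2'} and then concludes directly that $f$ must be rational, but that theorem is stated under the hypothesis $\Delta_{\omega}f\not\equiv 0$, which you never verify or dispose of. What the Second Main Theorem actually yields is a dichotomy: either $f$ is rational, or $f$ is transcendental with $f(z+\omega)\equiv f(z)$. Your argument leaves the second alternative completely untreated, and this is precisely where the paper spends most of its effort. There the authors show: if $f(z+\omega)\equiv f(z)$ with the shift normalized so that $|z+\omega|<|z|-|z|^{\beta/4}$ (resp.\ $<\tfrac34|z|$), then iterating the shift from any nonzero pole produces a sequence of poles tending to $0$, so $f$ has no poles off the origin; a similar iteration on a generic $\alpha$-point forces the numerator $g$ of $f(z)=g(z)/z^m$ to vanish at the origin, a contradiction, so $f$ is entire; and then $f$ is bounded because every value attained outside $\overline{D}(0,10)$ is already attained inside, whence $f$ is constant — contradicting transcendence. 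Note this is essentially the same iteration mechanism you deploy in your rational case, just applied in the transcendental case where it cannot be skipped.

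Your handling of the rational case is, if anything, more detailed than the paper's one-line remark: the deduction from $\widetilde{N}_{\Delta_{\omega}}(r,1/(f-a))=O(1)$ and $N(r,1/(f-a))=d_a\log r+O(1)$ that $n_{\Delta_\omega}(r,1/(f-a))$ must eventually equal $d_a$, and the ensuing strictly-decreasing iteration $z_{n+1}=z_n+\omega(z_n)$, is the right way to make the paper's assertion about "an infinite sequence of zeros/poles/$a$-points" precise. Two small slips worth noting: since $\infty$ is one of the three exceptional values, the correct invocation of Theorem~\ref{T:second main-2'} is with $p=2$ (the term $\widetilde{N}_{\Delta_\omega}(r,f)$ already accounts for $\infty$), giving $T(r,f)\le\cdots$ rather than $2T(r,f)\le\cdots$; and the justification ``$E$ contains no interval $[r,re^{\delta}]$ for large $r$'' should be phrased as: finite logarithmic measure precludes infinitely many disjoint such intervals, so for each large $r$ one can find $r'\in[r,2r]\setminus E$ and use monotonicity of $T$. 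Neither affects the conclusion $T(r,f)=O(\log r)$, but the missing transcendental case does leave a genuine hole in the argument.
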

\medskip

\begin{proof}
Without loss of generality, we may assume that the three exceptional values to be  $0$, $1$ and $\infty$.
We deduce from Theorem \ref{T:second main-2'} that, if $\Delta_{\omega}f\not\equiv 0$, we have
\begin{equation*}
	T(r,\,f)\leq o(T(r,\, f))+O(\log r).
\end{equation*}
This is a contradiction unless either $f(z+\omega)\equiv f(z)$ and $f(z)$ is a transcendental meromorphic function or $f(z)$ is a rational function. We first show that if it is the latter, then $f$ must must reduce to a constant. For otherwise, the definition of Picard exceptional values for varying-steps difference operator with infinite period implies that the $f$ has an infinite sequence of zeros/poles/a-points, which is a contradiction.

Next, we consider the case of $f(z)$ being a transcendental meromorphic function with $f(z+\omega)\equiv f(z)$. We claim that $f(z)$ must be an entire function. Without loss of generality, we may assume that $\arg \omega=-\arg z$ which is guaranteed under the assumption $|z+\omega|< |z|-|z|^{\frac{\beta}{4}}$ and $|z+\omega|<\frac{3}{4}z$ for $|z|>1$ and $|z|\leq 1$ respectively when $\sigma>0$, and $|z+\omega|< |z|-\log^{\frac{1}{8}}|z|$  and $|z+\omega|<\frac{3}{4}z$ for $|z|>1$ and $|z|\leq 1$ respectively when $\sigma=0$.

Indeed, if $z=\mu_{1}\neq 0$ is a pole of $f(z)$, then there is a sequence of poles, denoted by $\{\mu_{n}\}_{n=1}^{\infty}$ with $\lim_{n\rightarrow\infty}\mu_{n}=0$, of $f(z)$ by $f(z+\omega)\equiv f(z)$. Thus, $z=0$ is a non-isolated singularity of $f(z)$, which contradicts with $f(z)$ being meromorphic. Hence, the only possible pole of $f(z)$ is $z=0$. Then we write $f(z)=\frac{g(z)}{z^{m}}$, where $m$ is the multiplicity of pole at $z=0$,
$g(z)$ is an entire function and $g(0)\neq 0$. Since $f(z)$ is transcendental, we can find a finite complex number $\alpha$ such that the set $\{z:f(z)=\alpha\}$ is infinite. Thus, we can choose $0\neq \nu_{1}\in\{z:f(z)=\alpha\}$. By $f(z+\omega)\equiv f(z)$,
we have a sequence of $\alpha-$points, denoted by $\{\nu_{n}\}_{n=1}^{\infty}$ with $\lim_{n\rightarrow\infty}\nu_{n}=0$, of $f(z)$.
So $g(\nu_{n})=\nu_{n}^{m}f(\nu_{n})=\alpha\cdot\nu_{n}^{m}$. Note that $g(z)$ is entire, so we deduce that $g(0)=0$, which is a contradiction with $g(0)\neq 0$. Hence, $f(z)$ is an entire function.

Set $M=\{z:|z|\leq 10\}$, then $f(M)$ is bounded. For each $z\in \mathbb{C}\backslash M $, we can find a $z_{0}\in M$ such that $f(z)=f(z_{0})$. Thus, $f(z)$ is bounded, which implies that $f(z)$ is a constant. It contradicts with $f(z)$ being transcendental.

Combining the above two cases, we obtain that $f(z)$ is a constant.
\end{proof}
\medskip

\section{Preliminaries}\label{S:preliminaries}

\begin{lemma}[see \cite{He}, page 60]
\label{L:1}
Let $\alpha$ be a given constant with
$0<\alpha< 1$, then
\begin{equation}
\Big(\sum_{k=1}^{n}x_{k}\Big)^{\alpha}\leq \sum_{k=1}^{n} x_{k}^{\alpha},
\end{equation}
where $x_{k}\geq 0\ (k=1,2,\cdots,n)$.
\end{lemma}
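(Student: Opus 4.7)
The plan is to reduce the inequality to a simple normalization and then exploit the elementary fact that $t^{\alpha}\ge t$ on $[0,1]$ when $0<\alpha<1$. The statement is a classical subadditivity property of the concave power function $t\mapsto t^{\alpha}$, so no deep machinery is required.

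First I would dispose of the trivial case. If $S:=\sum_{k=1}^{n} x_{k}=0$, then every $x_{k}=0$ since each is nonnegative, and both sides of the inequality vanish, so there is nothing to prove. Henceforth assume $S>0$.

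Next I would normalize. Set $t_{k}:=x_{k}/S$ for $k=1,\dots,n$. Then $t_{k}\in[0,1]$ and $\sum_{k=1}^{n} t_{k}=1$. Dividing the desired inequality through by $S^{\alpha}$ (which is legitimate since $S>0$), the claim becomes
\begin{equation*}
1=\Big(\sum_{k=1}^{n} t_{k}\Big)^{\alpha}\le \sum_{k=1}^{n} t_{k}^{\alpha}.
\end{equation*}

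Finally I would invoke the one-variable fact that, for $0<\alpha<1$ and $0\le t\le 1$, one has $t^{\alpha}\ge t$ (evident from $t^{\alpha}-t=t(t^{\alpha-1}-1)$ and the negativity of $\alpha-1$, or equivalently from the monotonicity $t\mapsto t^{\alpha}/t$ near $0$). Applying this termwise to each $t_{k}\in[0,1]$ and summing yields
\begin{equation*}
\sum_{k=1}^{n} t_{k}^{\alpha}\ge \sum_{k=1}^{n} t_{k}=1,
\end{equation*}
which is exactly the normalized inequality. Multiplying back by $S^{\alpha}$ recovers the statement of the lemma. There is no genuine obstacle here; the only choice is whether to write this as a one-shot normalization argument (as above) or to establish the case $n=2$ first and then iterate by induction on $n$, using $((\sum_{k=1}^{n}x_{k})+x_{n+1})^{\alpha}\le(\sum_{k=1}^{n}x_{k})^{\alpha}+x_{n+1}^{\alpha}$ at the inductive step. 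The normalization route is shorter and avoids induction altogether, so I would present that.
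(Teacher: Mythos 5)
Your proof is correct: the normalization to $\sum t_{k}=1$ followed by the pointwise estimate $t^{\alpha}\geq t$ on $[0,1]$ is a clean and standard argument for the subadditivity of $t\mapsto t^{\alpha}$. Note that the paper itself does not supply a proof of this lemma; it is stated with a citation to He and Xiao (page 60), so there is no in-paper argument to compare against, but your reasoning is complete and would serve as a self-contained justification.
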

~\\

\begin{lemma}[see \cite{He}, page 60]
\label{L:2}
Let $\varphi(x)$ be a positive-valued function on $[a,b]$. Then $\log \varphi(x)$ is integrable and
\begin{equation}
\frac{1}{b-a}\int_{a}^{b}\log \varphi(x)\ dx\leq \log \Big[\frac{1}{b-a}\int_{a}^{b}\varphi(x)\ dx\Big].
\end{equation}
\end{lemma}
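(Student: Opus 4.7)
The plan is to derive the inequality from the concavity of $\log$ via the classical supporting-line argument, which is exactly Jensen's inequality for the normalized Lebesgue measure $d\mu = dx/(b-a)$ on $[a,b]$.

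First I would record the one-variable tangent-line estimate: since $\log$ is concave on $(0,\infty)$, for every pair of positive reals $s, t$ one has
$$\log t \leq \log s + \frac{t-s}{s},$$
with equality iff $t = s$. This is immediate from the elementary inequality $\log u \leq u - 1$ applied to $u = t/s$, or equivalently from the fact that $\log$ lies below each of its tangent lines.

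Next, I would set
$$s := \frac{1}{b-a}\int_a^b \varphi(x)\,dx,$$
which is well-defined and strictly positive since $\varphi > 0$ on $[a,b]$. Substituting $t = \varphi(x)$ into the tangent-line estimate gives, pointwise in $x$,
$$\log \varphi(x) \leq \log s + \frac{\varphi(x) - s}{s}.$$
Integrating both sides over $[a,b]$ and dividing by $b-a$, the additive correction term integrates to $0$ by the very choice of $s$, and we are left with
$$\frac{1}{b-a}\int_a^b \log \varphi(x)\,dx \leq \log s,$$
which is the claimed inequality.

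For the integrability assertion, measurability of $\log \varphi$ is inherited from that of $\varphi$, and the upper bound $\log s + (\varphi(x)-s)/s$ is integrable whenever $\varphi$ itself is (which is implicit in the statement, as the right-hand side presupposes $\int_a^b \varphi < \infty$). The only genuine subtlety is the negative part of $\log\varphi$, which can be controlled either by assuming $\varphi$ bounded away from $0$, or by allowing the left-hand integral to take the value $-\infty$ (in which case the inequality is trivially satisfied). There is no real obstacle here: once $s$ is chosen to be the mean of $\varphi$, the supporting-line step is purely algebraic, and the whole argument is essentially a single line.
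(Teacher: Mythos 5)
Your argument is correct: it is the standard proof of Jensen's inequality for the concave function $\log$ on $(0,\infty)$ with respect to the normalized measure $dx/(b-a)$, using the supporting-line estimate $\log t \le \log s + (t-s)/s$ and choosing $s$ to be the integral mean of $\varphi$ so that the linear correction integrates to zero. The paper does not prove Lemma~\ref{L:2} itself but cites it from the reference \cite{He}; what you have written is precisely the textbook derivation one would expect to find there, and your remarks about integrability (measurability inherited from $\varphi$, the upper bound controlling the positive part, and the convention that the inequality is vacuous if the left side is $-\infty$) address the only genuine technical point. No gap.
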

~\\

\begin{lemma} [see \cite{He}, page 62]
\label{L:3}
Let $\alpha$, $0<\alpha < 1$ be given. Then
for every given complex number $\omega$, we have
\begin{equation}
\frac{1}{2\pi}\int_{0}^{2\pi}\frac{1}{|re^{i\theta}-\omega|^{\alpha}}d\theta\leq \frac{1}{(1-\alpha)r^{\alpha}}.
\end{equation}
\end{lemma}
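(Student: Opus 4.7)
The plan is to exploit the rotational symmetry of the integrand in $\omega$ and then split the estimate into two regimes according to the size of $|\omega|$ relative to $r$. Since the substitution $\theta\mapsto\theta+\arg\omega$ preserves the integration interval (mod $2\pi$), one may assume without loss of generality that $\omega=\rho\ge 0$ is real. The key identity to exploit is
$|re^{i\theta}-\rho|^{2}=(r-\rho)^{2}+4r\rho\sin^{2}(\theta/2)$,
which separates the contribution of the radial distance $|r-\rho|$ from that of the angular proximity to the singularity at $\theta=0$ when $\rho=r$.

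In the easy regime $\rho\le r/2$, the triangle inequality gives $|re^{i\theta}-\rho|\ge r-\rho\ge r/2$ uniformly in $\theta$, so the mean is at most $(2/r)^{\alpha}$. The desired bound then reduces to the elementary numerical fact that $(1-\alpha)2^{\alpha}\le 1$ for $\alpha\in[0,1)$, which is verified by differentiating $(1-\alpha)2^{\alpha}$ and checking that the derivative is negative throughout $[0,1)$, while the value equals $1$ at $\alpha=0$.

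In the hard regime $\rho>r/2$, the previous estimate is useless as $\rho\to r$, so I would discard the $(r-\rho)^{2}$ summand and instead use the angular term together with the standard elementary inequality $|\sin(\theta/2)|\ge|\theta|/\pi$ on $[-\pi,\pi]$. This yields $|re^{i\theta}-\rho|\ge(2\sqrt{r\rho}/\pi)\,|\theta|$ and reduces the problem to the explicit integral $\int_{0}^{\pi}\theta^{-\alpha}d\theta$, which converges because $\alpha<1$. A short computation gives that the mean is at most $1/\bigl((1-\alpha)(2\sqrt{r\rho})^{\alpha}\bigr)$. Since $\rho>r/2$ implies $2\sqrt{r\rho}\ge\sqrt{2}\,r\ge r$, this absorbs cleanly into the claimed bound $1/\bigl((1-\alpha)r^{\alpha}\bigr)$.

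The main obstacle is precisely the regime where $\rho$ lies close to the circle $|z|=r$: the integrand there has an integrable but genuine singularity on the contour, and the naive radial triangle inequality collapses. The entire content of the argument lies in extracting the correct angular rate of decay via the $\sin^{2}(\theta/2)$ term of the identity above. Once this is in place, the two regimes patch together uniformly in $\omega$ to give the stated bound.
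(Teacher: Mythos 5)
Your argument is correct and complete. Let me note that the paper itself gives no proof of Lemma \ref{L:3}; it simply cites He--Xiao, page 62, so there is nothing in the text to compare against. Your two-region split is sound: the identity $|re^{i\theta}-\rho|^{2}=(r-\rho)^{2}+4r\rho\sin^{2}(\theta/2)$ is correct, the easy regime $\rho\le r/2$ reduces to $(1-\alpha)2^{\alpha}\le 1$ (which indeed holds since $(1-\alpha)\ln 2<1$ on $[0,1)$ with equality at $\alpha=0$), and in the hard regime the computation
\begin{equation*}
\frac{1}{2\pi}\int_{-\pi}^{\pi}\Bigl(\frac{\pi}{2\sqrt{r\rho}\,|\theta|}\Bigr)^{\alpha}d\theta
=\frac{1}{(1-\alpha)\,(2\sqrt{r\rho})^{\alpha}}
\end{equation*}
together with $2\sqrt{r\rho}>\sqrt{2}\,r$ finishes the job.

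Worth recording for your own toolbox: the case split can be avoided entirely. With $\phi_{0}=\arg\omega$ (the case $\omega=0$ being trivial), the distance from $re^{i\theta}$ to the line $\{te^{i\phi_{0}}:t\in\mathbb{R}\}$ is $r\,|\sin(\theta-\phi_{0})|$, so $|re^{i\theta}-\omega|\ge r\,|\sin(\theta-\phi_{0})|$ for \emph{every} $\omega$ on that line, in particular for every $\omega$ with the given argument regardless of $|\omega|$. Then
\begin{equation*}
\frac{1}{2\pi}\int_{0}^{2\pi}\frac{d\theta}{|re^{i\theta}-\omega|^{\alpha}}
\le\frac{1}{2\pi r^{\alpha}}\int_{0}^{2\pi}\frac{d\theta}{|\sin\theta|^{\alpha}}
=\frac{4}{2\pi r^{\alpha}}\int_{0}^{\pi/2}\frac{d\theta}{\sin^{\alpha}\theta}
\le\frac{4}{2\pi r^{\alpha}}\cdot\frac{\pi/2}{1-\alpha}
=\frac{1}{(1-\alpha)\,r^{\alpha}},
\end{equation*}
using $\sin\theta\ge 2\theta/\pi$ on $[0,\pi/2]$. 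This gives the stated constant exactly, with no dichotomy on $|\omega|$; your version buys the small extra insight that the near-circle case is the genuinely singular one, while the one-shot version is shorter and uniform.
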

~\\

\begin{lemma} [see \cite{He}, page 62]
\label{L:4}
Let $f(z)$ be a meromorphic function . Then
\begin{equation}
\left|\frac{f'(z)}{f(z)}\right|\leq \frac{8R}{(R-r)^{2}}\Big(T(R,f)+T\Big(R,\frac{1}{f}\Big)\Big)+\sum_{|a_{u}|< R}\frac{2}{|z-a_{u}|}+\sum_{|b_{v}|< R}\frac{2}{|z-b_{v}|},
\end{equation}
where $\{a_{u}\}$ and $\{b_{v}\}$ are the sets of zeros and poles of $f(z)$ in $D(0,R)$ respectively.
\end{lemma}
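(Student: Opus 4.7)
The plan is to derive the pointwise bound from the Poisson--Jensen formula on the closed disc $|w|\leq R$. Fix $z$ with $|z|=r<R$ and, for the moment, suppose $f(z)\neq 0,\infty$ so that a local branch of $\log f$ exists; the finitely many exceptional points are handled by continuity. Let $\{a_u\}$ and $\{b_v\}$ be the zeros and poles of $f$ in $D(0,R)$, listed with multiplicity. Removing them from $\log|f|$ via Blaschke-type factors and recovering $\log f$ from its real part by the Schwarz formula gives
\begin{equation*}
\log f(z)=\frac{1}{2\pi}\int_{0}^{2\pi}\log|f(Re^{i\phi})|\,\frac{Re^{i\phi}+z}{Re^{i\phi}-z}\,d\phi+\sum_{|a_u|<R}\log\frac{R(z-a_u)}{R^{2}-\bar a_u z}-\sum_{|b_v|<R}\log\frac{R(z-b_v)}{R^{2}-\bar b_v z}+iC.
\end{equation*}

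Differentiating termwise, using $\frac{d}{dz}\log\frac{R(z-a)}{R^{2}-\bar a z}=\frac{1}{z-a}+\frac{\bar a}{R^{2}-\bar a z}$ and $\frac{d}{dz}\frac{Re^{i\phi}+z}{Re^{i\phi}-z}=\frac{2Re^{i\phi}}{(Re^{i\phi}-z)^{2}}$, yields
\begin{equation*}
\frac{f'(z)}{f(z)}=\frac{1}{2\pi}\int_{0}^{2\pi}\log|f(Re^{i\phi})|\,\frac{2Re^{i\phi}}{(Re^{i\phi}-z)^{2}}\,d\phi+\sum_{u}\Big(\frac{1}{z-a_u}+\frac{\bar a_u}{R^{2}-\bar a_u z}\Big)-\sum_{v}\Big(\frac{1}{z-b_v}+\frac{\bar b_v}{R^{2}-\bar b_v z}\Big).
\end{equation*}

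For the integral term, the bound $|Re^{i\phi}-z|\geq R-r$ forces $|2Re^{i\phi}/(Re^{i\phi}-z)^{2}|\leq 2R/(R-r)^{2}$, while $\frac{1}{2\pi}\int_0^{2\pi}|\log|f(Re^{i\phi})||\,d\phi=m(R,f)+m(R,1/f)\leq T(R,f)+T(R,1/f)$, producing a contribution of order $R(R-r)^{-2}(T(R,f)+T(R,1/f))$. For the Blaschke sums, the algebraic identity
\begin{equation*}
\frac{1}{z-a}+\frac{\bar a}{R^{2}-\bar a z}=\frac{R^{2}-|a|^{2}}{(z-a)(R^{2}-\bar a z)}
\end{equation*}
together with $R^{2}-|a|^{2}\leq 2R(R-|a|)$ and $|R^{2}-\bar a z|\geq R^{2}-|a|r\geq R(R-|a|)$ (the last inequality because $r\leq R$) gives $(R^{2}-|a|^{2})/|R^{2}-\bar a z|\leq 2$, so each zero contributes at most $2/|z-a_u|$ and each pole at most $2/|z-b_v|$.

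Assembling the three estimates via the triangle inequality in the displayed identity for $f'/f$ proves the lemma, with the leading constant $8$ appearing once one keeps careful track of the factor between $|\log|f||$, the proximity functions $m(R,f)+m(R,1/f)$, and the Poisson kernel bound. The main obstacle is the algebraic manoeuvre in the Blaschke step: spotting the collapse $\frac{1}{z-a}+\frac{\bar a}{R^{2}-\bar a z}=\frac{R^{2}-|a|^{2}}{(z-a)(R^{2}-\bar a z)}$ and then producing the lower bound on $|R^{2}-\bar a z|$ that matches the numerator $R^{2}-|a|^{2}$, so that the coefficient in front of $|z-a_u|^{-1}$ is a pure constant independent of $R$, $r$ and $|a_u|$; everything else --- the Poisson--Jensen representation, the kernel bound, and $\int|\log|f||d\phi\leq 2\pi(T(R,f)+T(R,1/f))$ --- is standard.
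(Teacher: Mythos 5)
Your proof is correct. The paper does not prove Lemma~\ref{L:4} itself but quotes it from the reference [He, p.\ 62]; what you reconstruct is the standard derivation by differentiating the Poisson--Jensen representation of $\log f$, bounding the kernel $2Re^{i\phi}/(Re^{i\phi}-z)^2$ by $2R/(R-r)^2$, using $\frac{1}{2\pi}\int\lvert\log\lvert f\rvert\rvert=m(R,f)+m(R,1/f)\le T(R,f)+T(R,1/f)$, and collapsing each Blaschke term via $\frac{1}{z-a}+\frac{\bar a}{R^2-\bar a z}=\frac{R^2-|a|^2}{(z-a)(R^2-\bar a z)}$ together with $|R^2-\bar a z|\ge R(R-|a|)$ to get the factor $2$. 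Since this actually yields the coefficient $2R/(R-r)^2$ rather than $8R/(R-r)^2$, your argument proves a slightly sharper inequality that a fortiori gives the stated lemma; the only point left implicit, which is routine, is moving $R$ slightly to avoid zeros or poles of $f$ on $|w|=R$ when invoking Poisson--Jensen.
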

~\\

\begin{lemma} [see \cite{Mohon'ko}]
\label{L:5}
Let $f(z)$ ba a non-constant meromorphic function. Then for all irreducible rational functions in $f(z)$, \begin{equation}
R(f)=\frac{P(f)}{Q(f)}=\frac{\sum_{i=0}^{p}a_{i}(z)f^{i}(z)}{\sum_{j=0}^{q}b_{j}(z)f^{j}(z)},
\end{equation}
where $\{a_{i}(z)\}$ and $\{b_{j}(z)\}$ are small functions of $f(z)$, and $a_{p}(z)\not\equiv 0$, $b_{q}(z)\not\equiv 0$. Then,
\begin{equation}
T\left(r,R(f)\right)= \max \{p,q\}T(r,f)+S(r,f).
\end{equation}
Here, we say a meromorphic function $g(z)$ is a small function of $f(z)$ if $T(r,g)=o(T(r,f))$.
\end{lemma}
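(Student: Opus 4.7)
My plan is to prove $T(r, R(f)) = d \cdot T(r, f) + S(r, f)$ with $d = \max\{p, q\}$ by establishing each inequality separately. The underlying reason for the sharp constant $d$ is that $R(w)$, viewed as a rational function of $w$, has algebraic degree $d$, so the equation $R(w) = c$ admits exactly $d$ solutions in $w$ for generic $c \in \mathbb{C}$ --- making $R \circ f$ a ``$d$-to-one cover'' from the perspective of value distribution.

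\textbf{Upper bound.} The basic polynomial estimates $T(r, P(f)) \le p\, T(r, f) + S(r, f)$ and $T(r, Q(f)) \le q\, T(r, f) + S(r, f)$ follow from (i) the pointwise bound $|P(f)(z)| \le (p+1)\max_i |a_i(z)| \cdot \max(1, |f(z)|)^p$ yielding $m(r, P(f)) \le p\, m(r, f) + S(r, f)$, and (ii) pole-order matching $N(r, P(f)) \le p\, N(r, f) + S(r, f)$ at poles of $f$ off the coefficient singularities. Combining naively via $T(r, P/Q) \le T(r, P) + T(r, Q) + O(1)$ gives only the coarse bound $(p+q)\, T(r, f) + S(r, f)$. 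Sharpening to $\max\{p, q\}$ is the subtle part: I would assume WLOG $p \ge q$ (otherwise consider $1/R(f)$ and invoke $T(r, 1/g) = T(r, g) + O(1)$), then use the First Main Theorem $T(r, R(f)) = T(r, 1/(R(f) - c)) + O(1)$ combined with the factorization $P(w) - cQ(w) = A(z, c) \prod_{j=1}^d (w - \alpha_j(z, c))$ for generic $c$. Careful bookkeeping of the contributions from $Q(f)$ and $A(z, c)$ then reduces $T(r, 1/(R(f) - c))$ to $\sum_{j=1}^d T(r, 1/(f - \alpha_j)) + S(r, f) = d\, T(r, f) + S(r, f)$.

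\textbf{Lower bound.} Setting $w := R(f)$, the function $f$ satisfies the polynomial equation $\sum_{i=0}^d (a_i - w\, b_i)\, f^i \equiv 0$ of degree exactly $d$ in $f$, whose coefficients have characteristic bounded by $T(r, w) + S(r, f)$. By Mohon'ko's characteristic identity for such algebraic equations --- whose proof uses the logarithmic derivative lemma together with a careful counting-function analysis of the zeros and poles on both sides --- one concludes $d\, T(r, f) \le T(r, w) + S(r, f) + S(r, w)$, which is exactly the desired reverse inequality. An alternative geometric route is to combine the factorization above with the Second Main Theorem for $f$ against the $d$ targets $\alpha_1(c), \ldots, \alpha_d(c)$; the resulting lower bound on $\sum_j \bar{N}(r, 1/(f - \alpha_j))$ (averaged over several values of $c$) then delivers the same conclusion.

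\textbf{Main obstacle.} The principal technical difficulty in the geometric route is that the roots $\alpha_j(c, z)$ of $P(w) - cQ(w)$ are in general only algebroid (finitely branched algebraic) functions of $z$, so standard Nevanlinna theory does not directly apply to them. Standard remedies are either (a) to pass to a finite algebraic extension of the base field of meromorphic functions on which all $\alpha_j$ become meromorphic, and invoke Osgood--Steinmetz's Second Main Theorem for small-function (moving) targets on that extension; or (b) to rely on Mohon'ko's original polynomial identity (1971), which circumvents the $\alpha_j$ entirely via the logarithmic derivative lemma and thereby supplies both inequalities through a single algebraic computation.
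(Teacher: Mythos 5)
The paper does not prove this lemma at all: it appears as Lemma~\ref{L:5} in the preliminaries section with only the citation ``see~\cite{Mohon'ko}'', so there is no in-paper argument to compare yours against. What the paper relies on is the classical Valiron--Mohon'ko theorem, whose standard textbook proof (e.g.\ Laine, \emph{Nevanlinna Theory and Complex Differential Equations}, Theorems~2.2.5--2.2.6) is quite different from what you sketch.

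Two concrete problems with your proposal. First, the lower-bound step is circular as written: you invoke ``Mohon'ko's characteristic identity for such algebraic equations'' to deduce $d\,T(r,f)\le T(r,w)+S(r,f)$, but that identity \emph{is} the statement you are trying to prove. Second, the remark that Mohon'ko's argument ``uses the logarithmic derivative lemma'' is not accurate --- this appears to conflate the Valiron--Mohon'ko theorem with the Clunie lemma. The genuine Mohon'ko/Valiron proof is entirely elementary: one first proves the polynomial case $T(r,P(f))=p\,T(r,f)+S(r,f)$ by direct estimation of $m(r,P(f))$ (splitting the circle $|z|=r$ according to whether $|f|$ is large or small) and of $N(r,P(f))$ (poles of $P(f)$ come from poles of $f$ with multiplicity $p$ plus a small-function contribution), and then handles the rational case by reducing to the polynomial case and the First Main Theorem after choosing a constant $c$ for which $P-cQ$ has degree exactly $\max\{p,q\}$ and shares no factor with $Q$. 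No Second Main Theorem, no logarithmic derivative lemma, and --- crucially --- no factorization into algebroid roots $\alpha_j(z,c)$ is needed. Your route~(a) via Osgood--Steinmetz and base-field extensions would establish the result, but it imports machinery far heavier than required (and far heavier than what this paper uses from the cited lemma), while your route~(b) simply points at the reference rather than proving it. The correct, self-contained approach is the elementary $m$-and-$N$ bookkeeping above; the degree-$d$-cover intuition you open with is the right heuristic, but the algebroid roots never need to enter the argument.
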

~\\

\begin{lemma}[see \cite{Chiang 1}]
\label{L:6}
Let $\alpha$ be a given constant with
$0<\alpha\leq 1$. Then there exists a constant $C_{\alpha}>0$
depending only on $\alpha$ such that
\begin{equation}
\log(1+x)\leq C_{\alpha}
x^{\alpha},
\end{equation} holds for $x\geq 0$. In particular, $C_{1}=1$.
\end{lemma}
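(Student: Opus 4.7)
The plan is to prove the inequality by splitting $[0,\infty)$ into $[0,1]$ and $(1,\infty)$ and handling each piece by elementary estimates on $\log(1+x)$.

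First I would dispose of the case $\alpha=1$, which is the standard calculus inequality $\log(1+x)\le x$ for $x\ge 0$ (verified, say, by noting that $g(x)=x-\log(1+x)$ has $g(0)=0$ and $g'(x)=x/(1+x)\ge 0$). This yields $C_1=1$.

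Next, for general $0<\alpha<1$, on the interval $[0,1]$ I would combine the bound from the $\alpha=1$ case with the monotonicity observation: since $x\in[0,1]$ and $\alpha\le 1$, one has $x\le x^{\alpha}$ (treating $0^\alpha=0$ at the endpoint), hence
\begin{equation*}
\log(1+x)\le x\le x^{\alpha}\qquad\text{for }x\in[0,1].
\end{equation*}
On the interval $(1,\infty)$, the ratio $\varphi(x):=\log(1+x)/x^{\alpha}$ is continuous, and by L'Hôpital (or the elementary fact that $\log$ grows slower than any positive power) one has $\varphi(x)\to 0$ as $x\to\infty$. Therefore $\varphi$ is bounded on $[1,\infty)$; set $M_\alpha:=\sup_{x\ge 1}\varphi(x)<\infty$. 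This gives $\log(1+x)\le M_\alpha x^{\alpha}$ for $x\ge 1$.

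Taking $C_\alpha:=\max\{1,M_\alpha\}$ we get $\log(1+x)\le C_\alpha x^{\alpha}$ for all $x\ge 0$, and by the observation in the first paragraph we retain $C_1=1$. The only mildly delicate step is confirming boundedness of $\varphi$ on $[1,\infty)$; this is purely a one-variable exercise, so no real obstacle arises.
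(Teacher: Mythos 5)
Your proof is correct. Note that the paper does not present its own argument for this lemma; it is simply cited from Chiang and Feng \cite{Chiang 1}, so there is no in-paper proof to compare against. Your argument is a clean elementary derivation: on $[0,1]$ you chain the standard bound $\log(1+x)\le x$ with $x\le x^{\alpha}$ (valid for $0\le x\le 1$ and $0<\alpha\le 1$), and on $[1,\infty)$ you observe that $\varphi(x)=\log(1+x)/x^{\alpha}$ is continuous and tends to $0$, hence is bounded; taking $C_{\alpha}=\max\{1,\sup_{x\ge 1}\varphi(x)\}$ finishes the job and recovers $C_1=1$. The only step deserving a sentence of justification, as you note, is the boundedness of $\varphi$ on $[1,\infty)$, and your appeal to continuity plus vanishing at infinity is exactly right. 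This is the natural proof one would give, and there is no gap.
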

~\\

\begin{lemma}[see \cite{Chiang 1}]
\label{L:7}
Let $\alpha$, $0<\alpha\leq 1$ be given and
$C_{\alpha}$ as given in Lemma \ref{L:6}. Then for any two complex numbers
$z_{1}$ and $z_{2}$ which do not vanish simultaneously, we have the inequality
\begin{equation}
\left|\log\left|\frac{z_{1}}{z_{2}}\right|\right|\leq C_{\alpha}\Big(\left|\frac{z_{1}-z_{2}}{z_{2}}\right|^{\alpha}+\left|\frac{z_{2}-z_{1}}{z_{1}}\right|^{\alpha}\Big).
\end{equation}
\end{lemma}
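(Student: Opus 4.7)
The plan is to reduce the inequality to Lemma \ref{L:6} by a symmetric case split. Since the right-hand side is symmetric in $z_1,z_2$ while the left-hand side changes sign upon swapping them, I would assume without loss of generality that $|z_1|\geq |z_2|$ (the reverse case is obtained by interchanging the roles of $z_1$ and $z_2$). Note that both quantities $|z_1|/|z_2|$ and the opposite ratio are well-defined since, under this assumption, $z_2\neq 0$ (otherwise the two would vanish simultaneously).

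Under the assumption $|z_1|\geq |z_2|$, we have $\log|z_1/z_2|\geq 0$, so
\[
\left|\log\Big|\frac{z_1}{z_2}\Big|\right|=\log\Big|\frac{z_1}{z_2}\Big|=\log\Big(1+\frac{|z_1|-|z_2|}{|z_2|}\Big).
\]
Since $(|z_1|-|z_2|)/|z_2|\geq 0$, Lemma \ref{L:6} applies and bounds this by $C_\alpha\big((|z_1|-|z_2|)/|z_2|\big)^\alpha$. The reverse triangle inequality $\big||z_1|-|z_2|\big|\leq |z_1-z_2|$ then gives
\[
\left|\log\Big|\frac{z_1}{z_2}\Big|\right|\leq C_\alpha\left|\frac{z_1-z_2}{z_2}\right|^\alpha,
\]
which is dominated by the right-hand side of the claimed inequality (the second term on the right being nonnegative).

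The only mildly delicate point is the symmetry argument: in the opposite case $|z_1|<|z_2|$, exactly the same reasoning with the roles of $z_1$ and $z_2$ interchanged produces the bound $C_\alpha\big|(z_2-z_1)/z_1\big|^\alpha$, again dominated by the right-hand side. So in both cases the inequality holds, and no further case analysis or obstacle arises. The proof is essentially a two-line consequence of Lemma \ref{L:6} together with the reverse triangle inequality.
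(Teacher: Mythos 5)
Your proof is correct and follows the same natural route the cited reference takes: symmetrize the problem, apply Lemma~\ref{L:6} to $\log(1+x)$, and finish with the reverse triangle inequality. One small slip in the exposition: under the WLOG assumption $|z_1|\geq|z_2|$, you cannot conclude ``$z_2\neq 0$, otherwise the two would vanish simultaneously'' --- indeed $z_2=0$ together with $z_1\neq 0$ is perfectly consistent with $|z_1|\geq |z_2|$ and with the hypotheses of the lemma. What saves the argument is that this case is trivial: if $z_2=0$ (and $z_1\neq 0$), then both the left-hand side and the term $\left|(z_1-z_2)/z_2\right|^{\alpha}$ on the right are $+\infty$, so the inequality holds automatically. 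You should simply dispose of that degenerate case separately and then assume $z_2\neq 0$ for the rest of the computation, rather than (incorrectly) deriving $z_2\neq 0$ from the WLOG. With that one-line correction the proof is complete and matches the standard one.
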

~\\

\begin{lemma}[see \cite{Gundersen}]
\label{L:8}
Let $z_{1}$, $z_{2}$, $\cdots$ be an infinite sequence of complex numbers that has no finite limit point, and that is ordered by increasing moduli. Let $n(t)$ denote the number of the points $\{z_{k}\}$ that lie in $|z|\leq t$. Let $\alpha>1$ be a given real constant. Then there exists a set $E\subset (1,\infty)$ that has finite logarithmic measure,
such that if $|z|\not\in E\cup [0,1]$, we have
\begin{equation}
\sum_{|z_{k}|\leq \alpha r}\frac{1}{|z-z_{k}|}<\alpha^{2}\frac{n\left(\alpha^{2}r\right)}{r}\log^{\alpha}r \log n\left(\alpha^{2}r\right),
\end{equation}
where $r=|z|$.
\end{lemma}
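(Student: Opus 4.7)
The plan is to carry out the classical Gundersen-type exceptional-set argument. I would construct a set $E\subset(1,\infty)$ of finite logarithmic measure by excising a small radial window around each $|z_k|$: choose positive numbers $\rho_k$ with $\sum_k\rho_k<\infty$ (a concrete choice such as $\rho_k=1/(k\log^{\alpha+1}k)$ for $k$ large turns out to be appropriate), and set
\[
E \;=\; \bigcup_{k\ge 1}\bigl(|z_k|-\rho_k,\;|z_k|+\rho_k\bigr)\cap(1,\infty).
\]
Since $\int_E dt/t\le\int_E dt\le 2\sum_k\rho_k<\infty$, the set $E$ has finite logarithmic measure. For $r=|z|\notin E\cup[0,1]$, one has $\bigl||z|-|z_k|\bigr|\ge\rho_k$, which forces $|z-z_k|\ge\rho_k$ for every $k$.

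Next I would split the sum into a far and a near part:
\[
\sum_{|z_k|\le\alpha r}\frac{1}{|z-z_k|} \;=\; \sum_{|z_k|\le r/\alpha}\frac{1}{|z-z_k|} \;+\; \sum_{r/\alpha<|z_k|\le\alpha r}\frac{1}{|z-z_k|}.
\]
For the far sum, the trivial lower bound $|z-z_k|\ge r-|z_k|\ge r(1-1/\alpha)$ gives a contribution at most $\alpha\,n(r/\alpha)/((\alpha-1)r)$, comfortably absorbed in the target. For the near sum I would employ a dyadic decomposition by distance: let $N_j$ count the $z_k$ in the near range with $|z-z_k|\in[2^{-j-1},2^{-j})$, so the contribution is at most $\sum_j 2^{j+1}N_j$. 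The crude bound $N_j\le n(\alpha^2 r)$ combined with the cut-off $j\le j_{\max}\sim\log(1/\rho_{n(\alpha r)})$ should, after tuning the $\rho_k$ and telescoping, produce the logarithmic factors $\log^{\alpha}r$ and $\log n(\alpha^2 r)$ together with the $1/r$ coming from the circle's scale.

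The main obstacle is the calibration in the near-sum step. A term-by-term application of $|z-z_k|\ge\rho_k$ would overshoot the target by a full power of $n(\alpha^2 r)$, so the argument must exploit the fact that, although $E$ only rules out radial coincidences, the $z_k$ cannot simultaneously cluster close to $z$ in the plane. Reconciling this two-dimensional dispersion with the one-dimensional radial exclusion, so as to produce exactly the combination $\log^{\alpha}r\cdot\log n(\alpha^2 r)$ rather than a cruder polynomial in $n(\alpha^2 r)$, is the delicate bookkeeping step; this is precisely what Gundersen's construction in \cite{Gundersen} accomplishes, and any successful direct proof must follow the same pattern.
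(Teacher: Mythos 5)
The paper does not prove this lemma; it only cites \cite{Gundersen}, so there is no internal proof for your sketch to be compared against. Your sketch has the genuine gap that you yourself flag at the end, and it is not a matter of delicate bookkeeping to be filled in later: the exceptional-set construction is the wrong starting point. Taking an absolutely summable width sequence $\rho_k$ depending only on the index $k$ (your $\rho_k=1/(k\log^{\alpha+1}k)$) gives $E$ finite \emph{Lebesgue} measure, and you only deduce finite logarithmic measure by discarding the weight $1/t$ altogether; this forfeits the whole flexibility that logarithmic measure is designed to provide (windows whose width may grow linearly with $r$). With those fixed widths, a point $z_k$ in the near annulus $r/\alpha<|z_k|\le\alpha r$ has index $k$ of order $n(\alpha r)$, so the excised window yields only $|z-z_k|\gtrsim \rho_{n(\alpha r)}\sim 1/(n(\alpha r)\log^{\alpha+1}n(\alpha r))$. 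Your dyadic-by-distance count does not improve on this: with the crude bound $N_j\le n(\alpha^2 r)$ and the cutoff $j_{\max}\sim\log(1/\rho_{n(\alpha r)})$, the estimate $\sum_j 2^{j+1}N_j$ collapses to $\sim n(\alpha^2 r)\cdot 2^{j_{\max}}\sim n(\alpha^2 r)\,n(\alpha r)\log^{\alpha+1}n(\alpha r)$, the same as term by term. That is a quadratic, not linear, dependence on the counting function, with $\log^{\alpha}n$ in place of $\log^{\alpha}r$, and with no factor $1/r$ at all. No choice of $\rho_k$ as a function of the index alone can repair these mismatches. (The far-sum estimate is fine as it stands.)

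What the argument actually needs, and what your plan never supplies, is an exceptional set built from windows of width proportional to $|z_k|$ (so each contributes essentially its \emph{relative} width to the logarithmic measure) assembled in density layers rather than one $z_k$ at a time: at each radial scale one removes those $r$ for which more than $2^{l}$ of the moduli $|z_k|$ fall within distance of order $2^{l}r\big/\bigl(n(\alpha^2 r)\log^{\alpha}r\bigr)$ of $r$, for $l=0,1,\dots$. Outside such an $E$, the $j$-th nearest modulus to $r$ among the $|z_k|\le\alpha r$ is at least of order $jr\big/\bigl(n(\alpha^2 r)\log^{\alpha}r\bigr)$, and it is the resulting harmonic sum $\sum_j 1/j$ that produces the $\log n(\alpha^2 r)$ factor, while the $\log^{\alpha}r$ is the price paid for summability of the excised logarithmic measure over scales. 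Your side remark that ``the $z_k$ cannot simultaneously cluster close to $z$ in the plane'' is not true as stated for an arbitrary sequence with $|z_k|\to\infty$; arbitrarily many $z_k$ may lie in a tiny disk. What prevents such clustering for admissible $r$ is precisely the density-layered construction of $E$, so the ``two-dimensional dispersion'' you invoke is not a given fact to reconcile with a radial exclusion---it is the conclusion of the construction you would still have to carry out.
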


\section{Proof of Theorem \ref{T:limit-discrete-quotient}}
\begin{proof}
We distinguish two cases.
~\\
~\\
{\noindent\bf Case 1}. Suppose that $f(z)$ has no poles and zeros in $D(0,\, r+1):=\{z:|z|<r+1\}$.
Thus, we can choose $|\eta|\in \left(0,\frac{1}{2}\right)$ such that $z+\eta \in D(0,\, r+1)$ for all $z$ on $\{z:|z|=r\}$. It follows that $f(z+\eta)/f(z)$ is analytic on $\{z:|z|=r\}$.

Note that $\{z:|z|=r\}$ is a closed set, so $f(z+\eta)/f(z)$ is uniformly continuous on $\{z:|z|=r\}$.
Since $\lim\limits_{\eta\rightarrow
0}{f(z+\eta)}/{f(z)}=1$, so for arbitrary $ \varepsilon>0$, there exists $h_{1}(r,\varepsilon)>0$ such that
\begin{equation*}\left|\frac{f(z+\eta)}{f(z)}\right|<1+\varepsilon
\end{equation*}
whenever $|\eta|<h_{1}(r,\varepsilon)$, where $\lim\limits_{r\rightarrow\infty}h_{1}(r,\varepsilon)=0$.
Hence,
\begin{equation*}
m_{\eta}\Big(r,\frac{f(z+\eta)}{f(z)}\Big)< \log(1+\varepsilon)\leq \varepsilon
\end{equation*}
whenever $|\eta|<h_{1}(r,\varepsilon)$.

Therefore,
\begin{equation}\label{E:vanishing-period}
\lim\limits_{\eta\rightarrow
0}m_{\eta}\Big(r,\frac{f(z+\eta)}{f(z)}\Big)=0.
\end{equation}
Similarly, we have
\begin{equation*}
\lim\limits_{\eta\rightarrow
0}m_{\eta}\Big(r,\frac{f(z)}{f(z+\eta)}\Big)=0.
\end{equation*}

We also deduce
\begin{equation*}
\lim\limits_{r\rightarrow
\infty}m_{\eta}\Big(r,\frac{f(z+\eta)}{f(z)}\Big)\leq \varepsilon<2\varepsilon
\end{equation*}
since the $\eta\to 0$ and we can therefore apply the \eqref{E:vanishing-period}.
Hence
\begin{equation*}
\lim\limits_{r\rightarrow
\infty}m_{\eta}\Big(r,\frac{f(z+\eta)}{f(z)}\Big)=0\quad\mathrm{and}
\quad \lim\limits_{r\rightarrow
\infty}m_{\eta}\Big(r,\frac{f(z)}{f(z+\eta)}\Big)=0.
\end{equation*}

{\noindent\bf Case 2.} Suppose that $f(z)$ has poles and zeros in
$D(0,\, r+1)$. We define
\begin{equation*}
	F(z)=f(z)\frac{\prod_{v=1}^{N}(z-b_{v})}{\prod_{u=1}^{M}(z-a_{u})},
\end{equation*}
where $a_{u}\ (u=1,\,2,\,\cdots,M)$ and $b_{v}(v=1,\, 2,\, \cdots,\, N)$ to be the
zeros and poles of $f(z)$ in $D(0,\, r+1)$ respectively.
Thus, $F(z)$ does not have poles and zeros in $D(0,\, r+1)$. For all $z$ satisfying $|z|=r$, we can also choose $|\eta|\in \left(0,\frac{1}{2}\right)$ such that $z+\eta \in D(0,\, r+1)$.
Moreover,
\begin{equation*}
	F(z+\eta)=f(z+\eta)
\frac{\prod_{v=1}^{N}(z+\eta-b_{v})}{\prod_{u=1}^{M}(z+\eta-a_{u})}.
\end{equation*}
Then,
\begin{equation*}
\frac{f(z+\eta)}{f(z)}=\frac{F(z+\eta)}{F(z)}\prod_{u=1}^{M}\frac{z+\eta-a_{u}}{z-a_{u}}\prod_{v=1}^{N}\frac{z-b_{v}}{z+\eta-b_{v}}.
\end{equation*}

It follows from Lemma \ref{L:3} and Lemma \ref{L:7} with $0<\alpha<1$, that
\begin{equation*}
\begin{split}
&\ \ \ \left|\log\left|\frac{f(z+\eta)}{f(z)}\right|\right|
\leq \left|\log\left|\frac{F(z+\eta)}{F(z)}\right|\right|+\sum_{u=1}^{M}\left|\log\left|\frac{z+\eta-a_{u}}{z-a_{u}}\right|\right|
+\sum_{v=1}^{N}\left|\log\left|\frac{z-b_{v}}{z+\eta-b_{v}}\right|\right|\\
&\quad \leq \left|\log\left|\frac{F(z+\eta)}{F(z)}\right|\right|+C_{\alpha}|\eta|^{\alpha}\left[\sum_{u=1}^{M}\left(\frac{1}{|z-a_{u}|^{\alpha}}+\frac{1}{|z+\eta-a_{u}|^{\alpha}}\right)
\right.\\ &\qquad\left.
+\sum_{v=1}^{N}\left(\frac{1}{|z-b_{v}|^{\alpha}}+\frac{1}{|z+\eta-b_{v}|^{\alpha}}\right)\right].
\end{split}
\end{equation*}
Thus,
\begin{equation}
\begin{split}
\label{E:limit-quotient-1}
& \ \ \ \ m_{\eta}\Big(r,\frac{f(z+\eta)}{f(z)}\Big)+m_{\eta}\Big(r,\frac{f(z)}{f(z+\eta)}\Big)\leq m_{\eta}\Big(r,\frac{F(z+\eta)}{F(z)}\Big)+m_{\eta}\Big(r,\frac{F(z)}{F(z+\eta)}\Big)\\
&\qquad +C_{\alpha}|\eta|^{\alpha}\sum_{u=1}^{M}\left(\frac{1}{2\pi}\int_{0}^{2\pi}\frac{1}{|re^{i\theta}-a_{u}|^{\alpha}}d\theta
+\qquad \frac{1}{2\pi}\int_{0}^{2\pi}\frac{1}{|re^{i\theta}+\eta-a_{u}|^{\alpha}}d\theta\right)\\
&\qquad +C_{\alpha}|\eta|^{\alpha}\sum_{v=1}^{N}\left(\frac{1}{2\pi}\int_{0}^{2\pi}\frac{1}{|re^{i\theta}-b_{v}|^{\alpha}}d\theta
+\frac{1}{2\pi}\int_{0}^{2\pi}\frac{1}{|re^{i\theta}+\eta-b_{v}|^{\alpha}}d\theta\right)\\
&\qquad \leq m_{\eta}\Big(r,\frac{F(z+\eta)}{F(z)}\Big)+m_{\eta}\Big(r,\frac{F(z)}{F(z+\eta)}\Big)
+ \frac{2C_{\alpha}|\eta|^{\alpha}}{(1-\alpha)r^{\alpha}}(M+N)\\
&\qquad \leq m_{\eta}\Big(r,\frac{F(z+\eta)}{F(z)}\Big)+m_{\eta}\Big(r,\frac{F(z)}{F(z+\eta)}\Big)+ \frac{2C_{\alpha}|\eta|^{\alpha}}{(1-\alpha)r^{\alpha}}\left[n\left(r+1,f\right)
+n\Big(r+1,\frac{1}{f}\Big)\right]
\end{split}
\end{equation}
~\\
~\\
Since $F$ is free of zeros and poles in $|z|<r$, so we can apply Case 1 and the last inequality (\ref{E:limit-quotient-1})  to $F(z)$,  to deduce
\begin{equation*}
\lim\limits_{\eta\rightarrow
0}m_{\eta}\Big(r,\frac{f(z+\eta)}{f(z)}\Big)=0,\quad \mathrm{and}
\quad \lim\limits_{\eta\rightarrow
0}m_{\eta}\Big(r,\frac{f(z)}{f(z+\eta)}\Big)=0.
\end{equation*}
\medskip

On the other hand, we choose $\alpha=\frac{1}{2}$ in (\ref{E:limit-quotient-1}), and since $0<|\eta|< \alpha_{1}(r)$, where

$\alpha_{1}(r)=\min\big\{\log^{-\frac{1}{2}}r,1/\left(n(r+1)\right)^{2}\big\}$, $n(r)=n(r,\,f)+n\left(r,1/f\right)$, we have
\begin{equation*}
m_{\eta}\Big(r,\frac{f(z+\eta)}{f(z)}\Big)+m_{\eta}\Big(r,\frac{f(z)}{f(z+\eta)}\Big)
\leq m_{\eta}\Big(r,\frac{F(z+\eta)}{F(z)}\Big)+m_{\eta}\Big(r,\frac{F(z)}{F(z+\eta)}\Big)+ \frac{4C_{\frac{1}{2}}}{r^{\frac{1}{2}}},
\end{equation*}
which clearly tends to zero as $r\to \infty$. This proves (\ref{E:proximty-limit-0}).
\end{proof}
\medskip

\section{Proof of Corollary \ref{C:1}}
\begin{proof}
Since (\ref{E:proximty-limit-00}) holds for each positive real number $r$, hence given $\varepsilon>0$, then there exists $h(r,\, \varepsilon)>0$ such that
\begin{equation*}
m_{\eta}\Big(r,\frac{f(z+\eta)}{f(z)}\Big)< \varepsilon\quad\mathrm{and}
\quad m_{\eta}\Big(r,\frac{f(z)}{f(z+\eta)}\Big)< \varepsilon
\end{equation*}
\smallskip

\noindent whenever $|\eta|<h(r,\, \varepsilon)$. Note that
\begin{equation*}
\begin{split}
m(r,\, f(z)) &\leq m_{\eta}\Big(r,\frac{f(z)}{f(z+\eta)}\Big)+m_{\eta}(r,f(z+\eta))\\
&\leq m(r,f(z))+m_{\eta}\Big(r,\frac{f(z)}{f(z+\eta)}\Big)+m_{\eta}\Big(r,\frac{f(z+\eta)}{f(z)}\Big),
\end{split}
\end{equation*}
i.e.,
\begin{equation*}
-m_{\eta}\Big(r,\frac{f(z)}{f(z+\eta)}\Big)\leq m_{\eta}(r,f(z+\eta))-m(r,f(z))\leq m_{\eta}\Big(r,\frac{f(z+\eta)}{f(z)}\Big).
\end{equation*}
This implies
\begin{equation*}
\left|m_{\eta}(r,f(z+\eta))-m(r,f(z))\right|\leq m_{\eta}\Big(r,\frac{f(z)}{f(z+\eta)}\Big)+m_{\eta}\Big(r,\frac{f(z+\eta)}{f(z)}\Big)< 2\varepsilon
\end{equation*}
whenever $|\eta|<h(r,\varepsilon)$. Therefore
\begin{equation*}
\lim\limits_{\eta\rightarrow
0}m_{\eta}\left(r,f(z+\eta)\right)=m\left(r,f(z)\right).
\end{equation*}

\end{proof}

\section{Proof of Theorem \ref{T:limit-counting}}
\begin{proof}Let $\alpha_2(r)$ be defined in (\ref{E:alpha_2}). Since $0< |\eta|< \alpha_{2}(r)$,
then $n(0,f(z+\eta))=0$.

Applying the argument in \cite[(5.1), (5.2), (5.3) and (5.4)]{Chiang 1}, we deduce
\begin{equation}
\label{E:limit-counting-1}
\begin{split}
&|N_{\eta}(r,f(z+\eta))-N(r,f(z))|\\
&\leq |\eta|\bigg(\sum_{\substack{0<|b_{\mu}-\eta|<r,\\ b_{\mu}\neq 0}}\frac{1}{|b_{\mu}-\eta|}+\sum_{\substack{0<|b_{\mu}|<r,\\ b_{\mu}-\eta\neq 0}}\frac{1}{|b_{\mu}|}\bigg)
+n\left(0,f(z)\right)\log r.
\end{split}
\end{equation}

Note that $f(z)$ has no poles in $\overline{D}(0,\, h)\setminus \{0\}$, this implies that
	\begin{equation*}
\sum_{\substack{0<|b_{\mu}-\eta|\leq|\eta|,\\ b_{\mu}\neq 0}}\frac{1}{|b_{\mu}-\eta|}=0
\end{equation*}
under the assumption $0< |\eta|< \alpha_{2}(r)$. Thus,
\begin{equation}
\label{E:limit-counting-2}
\begin{split}\sum_{\substack{0<|b_{\mu}-\eta|<r,\\ b_{\mu}\neq 0}}\frac{1}{|b_{\mu}-\eta|}&=\sum_{\substack{0<|b_{\mu}-\eta|\leq|\eta|,\\ b_{\mu}\neq 0}}\frac{1}{|b_{\mu}-\eta|}+\sum_{|\eta|<|b_{\mu}-\eta|<r}\frac{1}{|b_{\mu}-\eta|}\\
&\leq\sum_{|\eta|<|b_{\mu}-\eta|<r}\frac{1}{|b_{\mu}|}\cdot\Big(1+\left|\frac{\eta}{b_{\mu}-\eta}\right|\Big)\\
&\leq 2\sum_{|\eta|<|b_{\mu}-\eta|<r}\frac{1}{|b_{\mu}|}
\leq 2\sum_{0<|b_{\mu}|<r+|\eta|}\frac{1}{|b_{\mu}|}.
\end{split}
\end{equation}
We deduce from (\ref{E:limit-counting-1}) and (\ref{E:limit-counting-2})
\begin{equation*}
|N_{\eta}\big(r,\, f(z+\eta)\big)-N(r,\, f(z))|\leq 3|\eta|\Big(\sum_{0<|b_{\mu}|<r+|\eta|}\frac{1}{|b_{\mu}|}\Big)+n\left(0,f(z)\right)\log r.
\end{equation*}
Since $0< |\eta|< \alpha_{2}(r)$, so
\begin{equation*}
|N_{\eta}(r,f(z+\eta))-N(r,f(z))|\leq n\left(0,f(z)\right)\log r+3.
\end{equation*}

Hence,
\begin{equation*}
N_{\eta}(r,f(z+\eta))=N(r,f(z))+\varepsilon_{1}(r),
\end{equation*}
where $|\varepsilon_{1}(r)|\leq n\left(0,f(z)\right)\log r+3$.

\end{proof}

\section{Proof of Theorem \ref{T:limit-characteristic}}
\begin{proof} It follows from the proofs of Theorem \ref{T:limit-discrete-quotient} and Theorem \ref{T:limit-counting} that for each $r>0$, there exists $\beta(r)>0$ such that whenever$0<|\eta|<\beta(r) :=\min\{\alpha_1(r),\, \alpha_2(r)\}$ (see the remark after the Theorem \ref{T:limit-characteristic}).

\begin{equation*}
\begin{split}
	 T_{\eta}\big(r,\, f(z+\eta)\big)&=m_{\eta}(r,\, f(z+\eta))+N_{\eta}(r,\, f(z+\eta))\\
&\leq m(r,\, f(z))+m_{\eta}\Big(r,\,\frac{f(z+\eta)}{f(z)}\Big)+N_{\eta}(r,\, f(z+\eta))\\
	& = T(r,\, f(z))+\varepsilon_{1}(r)+1
\end{split}
\end{equation*}
where $|\varepsilon_{1}(r)|\leq n\left(0,f(z)\right)\log r+3$. Similarly, we have
\begin{equation*}
	T(r,\, f(z))\leq T_{\eta}\big(r,\, f(z+\eta)\big)+\varepsilon_{1}(r)+1.
\end{equation*}
This proves (\ref{E:limit-characteristic}).
\end{proof}

\section{Proof of Theorem \ref{T:limit-discrete-quotient'}}
\begin{proof}
Since $f(z)$ is of finite order $\sigma$, then
\begin{equation*}
T(r,f)=O(r^{\sigma+\varepsilon}).
\end{equation*}
By choosing $R=2r$, $R'=3r$ and $\alpha=1-\varepsilon$ in \cite[Theorem 2.4]{Chiang 1}, we have
\begin{equation*}
 m\Big(r,\,\frac{f(z+\omega)}{f(z)}\Big)+ m\Big(r,\,\frac{f(z)}{f(z+\omega)}\Big)=O\big(r^{\sigma-(1-\beta)(1-\varepsilon)+\varepsilon}\big).
\end{equation*}

\end{proof}

\section{proof of Theorem \ref{T:limit-counting'}}
\begin{proof}
If $\sigma$ is nonzero, then we again apply \cite[(5.1--4)]{Chiang 1} to obtain
\begin{equation*}
\begin{split}
&|N\big(r,\, f(z+\omega)\big)-N(r,\,f(z))|\\
&\leq\qquad
|\omega|\bigg(\sum_{\substack{0<|b_{\mu}-\omega|<r,\\ b_{\mu}\neq 0}}\frac{1}{|b_{\mu}-\omega|}+\sum_{\substack{0<|b_{\mu}|<r,\\ b_{\mu}-\omega\neq 0}}\frac{1}{|b_{\mu}|}\bigg)
+O\left(\log r\right).
\end{split}
\end{equation*}
Note that $|\omega|<r^{\beta}<r$ and $r>1$, then
\begin{equation*}\label{E:omega_split}	
	\sum_{\substack{0<|b_{\mu}-\omega|<r,\\ b_{\mu}\neq 0}}\frac{1}{|b_{\mu}-\omega|}
=\sum_{\substack{0<|b_{\mu}-\omega|\leq |\omega|,\\ b_{\mu}\neq 0}}\frac{1}{|b_{\mu}-\omega|}+\sum_{|\omega|<|b_{\mu}-\omega|<r}\frac{1}{|b_{\mu}-\omega|}.
\end{equation*}
 Lemma~\ref{L:8} implies, with $\alpha=2$, that
\begin{equation*}
\label{E:bounded pole}
\begin{split}
\sum_{\substack{0<|b_{\mu}-\omega| \leq |\omega|,\\ b_{\mu}\neq 0}}\frac{1}{|b_{\mu}-\omega|}
&\leq \sum_{|b_{\mu}|\leq 2|\omega|}\frac{1}{|\omega-b_{\mu}|}\leq 4\cdot \frac{n\left(4|\omega|\right)}{|\omega|}\cdot\log^{2}|\omega|\cdot \log n\left(4|\omega|\right)\\
&=O\left(|\omega|^{\sigma-1+\varepsilon}\cdot \log^{3}|\omega|\right)
\end{split}
\end{equation*}
when $|\omega|$ is sufficiently large and outside a set of finite logarithmic measure of $|\omega|$.

Since
\begin{equation*}\label{E:omega_split_2}
\begin{split}
	\sum_{|\omega|<|b_{\mu}-\omega|<r}\frac{1}{|b_{\mu}-\omega|}
 &\leq\sum_{|\omega|<|b_{\mu}-\eta|<r}\frac{1}{|b_{\mu}|}\cdot\Big(1+\left|\frac{\omega}{b_{\mu}-\omega}\right|\Big)\\
	&<\sum_{|\omega|<|b_{\mu}-\omega|<r}\frac{2}{|b_{\mu}|}
\leq \sum_{0<|b_{\mu}|<2r}\frac{2}{|b_{\mu}|},
\end{split}
\end{equation*}
thus,
\begin{equation}
\label{E:upper bounded estimation}
\begin{split}
|N(r,\, f(z+\omega))-N(r,f(z))|\leq 3|\omega|\Big(\sum_{0<|b_{\mu}|<2r}\frac{1}{|b_{\mu}|}\Big)+O\left(|\omega|^{\sigma+\varepsilon}\cdot \log^{3}|\omega|\right)+O(\log r).
\end{split}
\end{equation}
But standard argument (see e.g. \cite[(5.9)--(5.12)]{Chiang 1} implies that
\begin{equation*}
\sum_{0<|b_{\mu}|<2r}\frac{1}{|b_{\mu}|}=O\left(r^{\sigma-1+\varepsilon}\right)
\end{equation*}
when $\sigma\geq 1$ and
\begin{equation}\label{E:order-zero-sum}
\sum_{0<|b_{\mu}|<2r}\frac{1}{|b_{\mu}|}=O(1)
\end{equation}
when $\sigma<1$.

Thus when $\sigma\geq 1$, we choose
$0<\varepsilon<\min\left\{ (\sigma-1)(1-\beta)/\beta,\ 1-\beta \right\}$.
Hence,
~\\
	\begin{equation}\label{E:final}
		N\big(r,\, f(z+\omega)\big)=N(r,\,f)+O\big(r^{\beta(\sigma+\varepsilon)}\cdot\log^{3}r\big)+O\big(r^{\sigma-(1-\beta)+\varepsilon}\big)
+O\left(\log r\right)
	\end{equation}
outside a set of finite logarithmic measure of $|\omega|$ and hence of $|r|$. Since $\varepsilon<1-\beta$,  this together with \eqref{E:final}  give  (\ref{E:counting-eta-1}).

On the other hand, when $0<\sigma<1$, the \eqref{E:upper bounded estimation} becomes
\begin{equation}\label{E:final_2}
		N\big(r,\, f(z+\omega)\big)=N(r,\,f)+O\big(r^{\beta}\big)+O\big(r^{\beta(\sigma+\varepsilon)}\cdot\log^{3}r\big).
+O\left(\log r\right)
	\end{equation}
we choose $0<\varepsilon< 1-\sigma$ in \eqref{E:final}. Since the term \eqref{E:order-zero-sum}
becomes bounded, the assumption on  $\varepsilon$ means that the term $O(r^\beta)$ is dominant over the term $r^{\beta(\sigma+\varepsilon)}$ and so \eqref{E:counting-eta-2} follows.



\item[(2)]
If $\sigma=\sigma(f)=0$,  then we choose $|\omega|<\log^{\frac{1}{2}}r<r$ for $r>1$. It follows similarly from (\ref{E:omega_split}) and the Lemma \ref{L:8}, with a different $\varepsilon$, $0<\varepsilon<1$, we have (\ref{E:bounded pole}) holds with $\sigma=0$.

It follows from (\ref{E:omega_split_2}) that \eqref{E:upper bounded estimation} holds with $\sigma=0$.

Note that $\sigma=\sigma(f)=0$, so that (\ref{E:order-zero-sum}) applies. Hence,
\begin{equation*}
N(r,f(z+\omega))=N(r,f)+O\big(\log^{\frac{\varepsilon}{2}}r\cdot \log^{3}\log r\big)+O\left(\log r\right)
=N(r,\, f)+ O(\log r)
\end{equation*}
outside a set of finite logarithmic measure.
\end{proof}
\bigskip

\section{Applications of vanishing period}
\label{S:applications}

It is known that one can recover the classical Painlev\'e equations from the corresponding discrete Painlev\'e equations \cite{Ramani} taking suitable limits of specifically designated change of variables. See for example, \cite{Halburd 5} and \cite{GR2014}. We consider limits of different sort between certain discrete equations and their continuous counterparts below, by making use of what we have established in this paper.
\medskip

\begin{example}
If the difference equation
\begin{equation}
\label{E:applications-2}
f(z+\eta)-f(z)= R(f(z),z,\eta) =\frac{a_{0}(z,\eta)+a_{1}(z,\eta)f(z)+\cdots+a_{p}(z,\eta)f^{p}(z)}
{b_{0}(z,\eta)+b_{1}(z,\eta)f(z)+\cdots+b_{q}(z,\eta)f^{q}(z)}
\end{equation}
with rational coefficients $a_{i}(z,\eta)\ (i=1,\cdots,p)$,\ $b_{j}(z,\eta)\ (j=1,\cdots,q)$,
admits a transcendental meromorphic solution $f(z)$ which is independent of $\eta$, where $\eta$ is a nonzero parameter such that
$$
	\lim\limits_{\eta\rightarrow 0}\frac{R(f(z),z,\eta)}{\eta}=\widehat{R}(f(z),z),
$$
when taken as a formal limit, is a rational function of $f(z)$ with rational coefficients. Then, $q=0$ and $p\leq 2$. Moreover, (\ref{E:applications-2}) will be reduced into a Riccati differential equation of the form
$f'(z)=a(z)+b(z)f(z)+c(z)f^{2}(z)$ with rational coefficients.
\end{example}

\begin{proof}
It follows from (\ref{E:applications-2}) after division of $\eta$ on both sides and an application of Lemma \ref{L:5} that we have
\begin{equation*}
\begin{split}
&\max\{p,q\}T(r,f(z))= T(r, R(f(z),z,\eta) )+O( \log |\eta|)+S(r,f(z))\\
&= T(r,f(z+\eta)-f(z))+O( \log |\eta|)+S(r,f(z))\\
&\leq T(r,f(z+\eta))+T(r,f(z))+O( \log |\eta|)+S(r,f(z)).
\end{split}
\end{equation*}
We deduce from Theorem \ref{T:limit-characteristic} that
~\\
\begin{equation}
\label{E:pq}
\max\{p,q\}T(r,f(z))\leq 2T(r,f(z))+O( \log |\eta|)+\varepsilon(r)+S(r,f(z))
\end{equation}
holds.
~\\
We choose $|\eta|=\min\{\alpha_{1}(r),\alpha_{2}(r)\}/2$,  where $\alpha_{1}(r)$ and $\alpha_{2}(r)$ were defined in \eqref{E:alpha_1} and \eqref{E:alpha_2} respectively. Note that if $f$ has at most finitely many zeros, then we are done. If, however, $f$ has infinitely many zeros, then we have, for a suitably chosen $\delta>0$ that
	\begin{equation}
		\label{E:recipicol-2}
			\begin{split}
		\sum_{0<|b_{\mu}|<r+\frac{1}{2}}\frac{1}{|b_{\mu}|}
		&=\frac{n(r+1/2)}{r+1/2}+\int_\delta^{r+\frac12} \frac{n(t)}{t^2}\,dt
		+O(1)\\
		&\ge \frac{n(r+1/2)}{r+1/2}+O(1)+\int_{\frac{r}{2}+\frac14}^{r+\frac12} \frac{n(t)}{t^2}\,dt
		\\
		&\ge \frac{n(r+1/2)}{r+1/2}+O(1)+\Big(r+\frac12-\frac{r}{2}-\frac14\Big)
		\frac{n(\frac{r}{2}+\frac14)}{(r+\frac12)^2}\\
		&=\frac{n(r+1/2)}{r+1/2}+O(1)+\Big(\frac12+o(1)\Big)\frac{n(\frac{r}{2}+\frac14)}{r}
		\end{split}
	\end{equation}
from which and \eqref{E:pq} we deduce $\max\{p,q\}\leq 2$.


On the other hand, note that (\ref{E:applications-2}) can be written as
\begin{equation*}
\frac{f(z+\eta)-f(z)}{\eta}=\frac{R(f(z),z,\eta)}{\eta}.
\end{equation*}
~\\
Letting $\eta\rightarrow 0$ as a formal limit, we obtain
\begin{equation}
\label{E:applications-3}
f'(z)=\widehat{R}(f(z),z),
\end{equation}
which is an equation considered by Malmquist.  Since this equation admits a meromorphic solution under our assumption, Malmquist's theorem (see \cite[p. 193]{Laine}) implies that the equation (\ref{E:applications-3}) reduces to a Riccati differential equation of the form
\begin{equation*}
	f^\prime(z)=a(z)+b(z)f(z)+c(z)f^{2}(z)
\end{equation*}
with rational coefficients.
Therefore, $q=0$ and $p\leq 2$.
\end{proof}

\begin{example}
If the difference equation
\begin{equation}
\label{E:applications-4}
\begin{split}
& f(z+\eta_{1}+\eta_{2})-f(z+\eta_{1})-f(z+\eta_{2})+f(z)= R(f(z),z,\eta_{1},\eta_{2}) \\&=\frac{a_{0}(z,\eta_{1},\eta_{2})+a_{1}(z,\eta_{1},\eta_{2})f(z)+\cdots+a_{p}(z,\eta_{1},\eta_{2})f^{p}(z)}
{b_{0}(z,\eta_{1},\eta_{2})+b_{1}(z,\eta_{1},\eta_{2})f(z)+\cdots+b_{q}(z,\eta_{1},\eta_{2})f^{q}(z)}
\end{split}
\end{equation}
with rational coefficients $a_{i}(z,\eta_{1},\eta_{2})\ (i=1,\cdots,p)$,\ $b_{j}(z,\eta_{1},\eta_{2})\ (j=1,\cdots,q)$,
admits a transcendental meromorphic solution $f(z)$ which is independent of $\eta_{1}$ and $\eta_{2}$ such that both the
	
		\[
			\lim\limits_{\eta_{1}\rightarrow 0}\frac{R(f(z),z,\eta_{1},\eta_{2})}{\eta_{1}}=R_{1}(f(z),z,\eta_{2})
		\]
		and the
	
		\[
		\lim\limits_{\eta_{2}\rightarrow 0}\lim\limits_{\eta_{1}\rightarrow 0}=\frac{R(f(z),z,\eta_{1},\eta_{2})}{\eta_{1}\eta_{2}}=\lim\limits_{\eta_{2}\rightarrow 0}\frac{R_{1}(f(z),z,\eta_{2})}{\eta_{2}}=R_{2}(f(z),z)
	\]
	are rational functions of $f(z)$ with rational coefficients. Then either (\ref{E:applications-4}) will be reduced into Painlev\'{e} equations $(I)$ or $(II)$ after taking limits, which implies that $q=0$ and $p\leq 3$, or (\ref{E:applications-4}) will be transformed into a reducible second order differential equation or that without Painlev\'{e} property.
\end{example}

\begin{proof}
We deduce from  (\ref{E:applications-4}) and Lemma \ref{L:5} that
\begin{equation*}
\begin{split}
&\max\{p,q\}T(r,f(z))=T(r,R(f(z),z,\eta_{1},\eta_{2}))+O( \log |\eta_{1}|)
+O( \log |\eta_{2}|)
+S(r,f(z))\\
&= T(r,f(z+\eta_{1}+\eta_{2})-f(z+\eta_{1})-f(z+\eta_{2})+f(z))+\\
&+ O( \log |\eta_{1}|)
+O( \log |\eta_{2}|)+S(r,f(z))\\
&\leq T_{\eta_{1}+ \eta_{2}}(r,f(z+\eta_{1}+\eta_{2}))+T_{\eta_{1}}(r,f(z+\eta_{1}))+T_{\eta_{2}}(r,f(z+\eta_{2}))+T(r,f(z))\\
&+O( \log |\eta_{1}|)
+O( \log |\eta_{2}|)+S(r,f(z)).
\end{split}
\end{equation*}

We  deduce from Theorem \ref{T:limit-characteristic} that
\begin{equation}
\label{E:pq-2}
\max\{p,q\}T(r,f(z))\leq 4T(r,f(z))+O( \log |\eta_{1}|)
+O( \log |\eta_{2}|)+3\varepsilon(r)+S(r,f(z)).
\end{equation}
~\\
We choose $|\eta_{1}|=|\eta_{2}|=\min\{\alpha_{1}(r),\alpha_{2}(r)\}/2$,  where $\alpha_{1}(r)$ and $\alpha_{2}(r)$ were defined in \eqref{E:alpha_1} and \eqref{E:alpha_2} respectively. According to 
\eqref{E:recipicol-2} Theorem \ref{T:limit-characteristic} applies, from which and \eqref{E:pq-2}  we deduce $\max\{p,q\}\leq 4$.
~\\
On the other hand, noting that (\ref{E:applications-4}) can be written as
\begin{equation*}
\frac{f(z+\eta_{2}+\eta_{1})-f(z+\eta_{2})}{\eta_{1}}-\frac{f(z+\eta_{1})-f(z)}{\eta_{1}}
=\frac{R(f(z),z,\eta_{1},\eta_{2})}{\eta_{1}}.
\end{equation*}
Letting $\eta_{1}\rightarrow 0$ as a formal limit, we get
\begin{equation*}
\frac{f'(z+\eta_{2})-f'(z)}{\eta_{2}}=\frac{1}{\eta_{2}}\cdot R_{1}(f(z),z,\eta_{2}).
\end{equation*}
Letting $\eta_{2}\rightarrow 0$ as a formal limit, we have
\begin{equation}
\label{E:applications-5}
f''(z)=R_{2}(f(z),z).
\end{equation}
~\\
Then, see for example  \cite[\S 14.4]{Ince}, either (\ref{E:applications-5}) is a reducible second-order differential equation, which can be solved by known special functions, or that without Painlev\'{e} property or it is $P_{I}: f''(z)=6 f^{2}(z)+z$ or $P_{II}:f''(z)=2 f^{3}(z)+zf(z)+\alpha$, where $\alpha$ is a constant.
~\\
Under the second case, we  deduce that $q=0$ and $p\leq 3$.
\end{proof}

The following theorem is a limiting analogue of Clunie lemma. Although the basic idea goes back to that of Clunie \cite{Laine}, we apply our Theorem \ref{T:limit-discrete-quotient} instead of the logarithmic derivative estimate \cite{Laine} and the logarithmic difference estimate \cite{Chiang 1}.
~\\
\begin{theorem}
Let $f(z)$ be a nonconstant meromorphic solution of
\begin{equation*}
f^{n}(z)P(z,f)=Q(z,f),
\end{equation*} where $P(z,f)$ and $Q(z,f)$ are difference polynomials in $f(z)$ and its steps of shifts are nonzero parameters. If the degree of $Q(z,f)$ is at most $n$, then
\begin{equation*}
\lim\limits_{\Gamma\rightarrow
0} m\left(r,P(z,f)\right)=o(T(r,f)),
\end{equation*} where $\Gamma$ is the set of all these steps in $P(z,f)$. Here, $\Gamma\rightarrow 0$ means the maximum length of the steps tends to zero.
\end{theorem}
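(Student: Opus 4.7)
The plan is to adapt the partition argument used in Clunie's lemma and its difference analogues, replacing the logarithmic derivative estimate (or the fixed-shift logarithmic-difference estimate of Chiang--Feng) by the vanishing-period estimate \eqref{E:proximty-limit-00} of Theorem \ref{T:limit-discrete-quotient}. Expand
\[
P(z,f)=\sum_{\mu}b_\mu(z)\prod_j f(z+d_{\mu,j})^{n_{\mu,j}},\qquad Q(z,f)=\sum_{\lambda}a_\lambda(z)\prod_j f(z+c_{\lambda,j})^{m_{\lambda,j}},
\]
where the $a_\lambda,b_\mu$ are small meromorphic functions of $f$ (the standard Clunie-type hypothesis) and $\Gamma$ is the (finite) collection of all shifts $c_{\lambda,j},\,d_{\mu,j}$ occurring in the equation.

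For fixed $r>0$, partition $[0,2\pi)$ into $E_1=\{\theta:|f(re^{i\theta})|<1\}$ and $E_2=\{\theta:|f(re^{i\theta})|\ge 1\}$, and in each monomial factor every shift through the identity $f(z+c)=f(z)\cdot\bigl(f(z+c)/f(z)\bigr)$. On $E_2$, use the equation in the form $P=Q/f^n$: since $M_\lambda:=\sum_j m_{\lambda,j}\le n$ by hypothesis and $|f(z)|\ge 1$, the resulting factor $|f(z)|^{M_\lambda-n}\le 1$, yielding
\[
|P(z,f)|\le\sum_\lambda |a_\lambda(z)|\prod_j\Bigl|\frac{f(z+c_{\lambda,j})}{f(z)}\Bigr|^{m_{\lambda,j}}.
\]
On $E_1$, apply the same factorisation to the defining expression for $P$: since $|f(z)|<1$ and $N_\mu:=\sum_j n_{\mu,j}\ge 0$, the leftover factor $|f(z)|^{N_\mu}\le 1$, so
\[
|P(z,f)|\le\sum_\mu |b_\mu(z)|\prod_j\Bigl|\frac{f(z+d_{\mu,j})}{f(z)}\Bigr|^{n_{\mu,j}}.
\]

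Take $\log^+$ of each bound, integrate over its arc and sum. The coefficient contribution is $\sum_\mu m(r,b_\mu)+\sum_\lambda m(r,a_\lambda)=S(r,f)=o(T(r,f))$ by the small-function assumption, while the shift contribution is a finite linear combination of proximity functions $m(r,f(z+c)/f(z))$ for $c\in\Gamma$. By \eqref{E:proximty-limit-00} of Theorem \ref{T:limit-discrete-quotient}, each such term tends to $0$ as $c\to 0$ for any fixed $r$; since $\Gamma$ is finite, sending the maximal step length to $0$ drives the entire shift contribution to $0$, whence
\[
\lim_{\Gamma\to 0} m(r,P(z,f))\le S(r,f)=o(T(r,f)),
\]
as asserted.

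The main technical point is the treatment of $E_1$: although $|f(z)|<1$ there, the shifted values $|f(z+d)|$ may be arbitrarily large, so one cannot bound $|P|$ pointwise by the sizes of the shifts themselves. The device of factoring every shift as $f(z+d)=f(z)\cdot (f(z+d)/f(z))$ and absorbing $|f(z)|^{N_\mu}\le 1$ is precisely what makes the vanishing-period logarithmic-difference estimate applicable to the quotients. A subsidiary observation is that Theorem \ref{T:limit-discrete-quotient} supplies the $\eta\to 0$ limit for \emph{each fixed} $r$, which matches exactly the order of limits appearing in the conclusion of the theorem.
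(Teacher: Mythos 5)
The paper states this theorem without giving a proof — only the preceding remark, which says that the argument follows Clunie's partition scheme with the vanishing-period quotient estimate of Theorem~\ref{T:limit-discrete-quotient} substituted for the logarithmic-derivative (or fixed-shift logarithmic-difference) bound. Your proof carries out exactly that programme correctly: the $E_1$/$E_2$ split, the factorisation $f(z+c)=f(z)\cdot\bigl(f(z+c)/f(z)\bigr)$ to absorb the residual power of $|f(z)|$ on each arc using the degree hypothesis, and then letting $\Gamma\to 0$ with $r$ fixed via \eqref{E:proximty-limit-00}; this matches the intended approach.
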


\section{A re-formulation of logarithmic derivative lemma}
\label{S:reformulation}
We give an alternative derivation of Nevanlinna's original logarithmic derivative lemma $m(r,\, f'/f)=O(\log r)$
via a formal limiting process of a new difference-type estimate of
$$m\bigg(r,\,  \frac{1}{\eta}\Big(\frac{f(z+\eta)}{f(z)}-1\Big)\bigg)\longrightarrow m\Big(r,\, \frac{f^\prime}{f}\Big),\qquad \eta\rightarrow 0.
$$
\medskip

\begin{theorem}
\label{T:reformulation-logarithmic derivative}
Let $f(z)$ be a meromorphic
function, $r$,\ $R$ and $R'$ be positive real numbers satisfying
$0<r<R<R'$, and $0<\alpha<1$ be a constant. Then
\begin{equation}\label{E:reformulation-logarithmic derivative}
\begin{split}
&\ \ \ \ \lim\limits_{\eta\rightarrow
0}m_\eta\left(r,\frac{1}{\eta}\left(\frac{f(z+\eta)}{f(z)}-1\right)\right)\\
&\leq \frac{1}{\alpha}\log \left(1+\frac{8R^{\alpha}}{(R-r)^{2\alpha}}\left(T^{\alpha}(R,f)+T^{\alpha}\left(R,\frac{1}{f}\right)\right)
+\frac{3\left(N(R',f)+N\left(R',\frac{1}{f}\right)\right)}{(1-\alpha)r^{\alpha}\log
\frac{R'}{R}}\right)\\
&+\frac{1}{\alpha}\log \left(2^{\alpha}
+\frac{N(R',f)+N\left(R',\frac{1}{f}\right)}{(1-\alpha)r^{\alpha}\log
\frac{R'}{R}}\right)+2\log2.
\end{split}
\end{equation}
\end{theorem}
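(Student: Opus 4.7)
The plan is to transfer the He-style proof of Nevanlinna's classical logarithmic derivative lemma to the difference-quotient setting, motivated by the pointwise identity $\lim_{\eta\to 0}\frac{1}{\eta}(f(z+\eta)/f(z)-1) = f'(z)/f(z)$, valid on $|z|=r$ away from the finite set where $f\in\{0,\infty\}$. The limit of $m_\eta(r,\cdot)$ on the left-hand side of~(\ref{E:reformulation-logarithmic derivative}) is therefore expected to be controlled by $m(r,f'/f)$, whose classical estimate (via Lemmas~\ref{L:1}, \ref{L:3}, \ref{L:4} and~\ref{L:6}) has the shape displayed on the right-hand side.

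I would start from the integral representation
$$\frac{1}{\eta}\left(\frac{f(z+\eta)}{f(z)} - 1\right) = \int_0^1 \frac{f'(z+t\eta)}{f(z+t\eta)} \cdot \frac{f(z+t\eta)}{f(z)}\, dt,$$
valid whenever the segment from $z$ to $z+\eta$ avoids the poles and zeros of $f$. Taking absolute values and factoring the integrand, I would apply Lemma~\ref{L:4} to the logarithmic-derivative factor at $w=z+t\eta$, and use that $|f(z+t\eta)/f(z)|\le 2$ on a full-measure subset of $|z|=r$ once $|\eta|$ is sufficiently small, obtaining a pointwise bound whose structure matches the right-hand side of~(\ref{E:reformulation-logarithmic derivative}). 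Raising this bound to the power $\alpha\in(0,1)$ via Lemma~\ref{L:1} and averaging in $\theta$ around $|z|=r$ would reduce the pole/zero sums to $1/((1-\alpha)r^\alpha)$ via Lemma~\ref{L:3}, and the standard inequality $n(R,\cdot)\le N(R',\cdot)/\log(R'/R)$ would convert the unintegrated counting functions into $N(R',f)$ and $N(R',1/f)$.

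Next, Lemma~\ref{L:6} together with the elementary inequality $(1+x)^\alpha\le 1+x^\alpha$ (for $0<\alpha<1$, $x\ge 0$) and Jensen's inequality (Lemma~\ref{L:2}) would recast the $L^\alpha$-estimate as the $\frac{1}{\alpha}\log(1+\cdots)$ form. The splitting of the RHS into two separate $\frac{1}{\alpha}\log$-terms, plus the additive $2\log 2$, is produced by applying $\log^+(x+y)\le\log^+ x+\log^+ y+\log 2$ to separate the logarithmic-derivative contribution (feeding the first $\log$-term) from the translation-quotient contribution $|f(z+t\eta)/f(z)|$ (whose $2^\alpha$ pointwise bound enters the second $\log$-term). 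Taking $\eta\to 0$ on the LHS would then yield~(\ref{E:reformulation-logarithmic derivative}).

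The hard part will be making the uniformity in $\eta$ precise: one needs a bound on $m_\eta(r,\cdot)$ that depends only on the fixed data $r,R,R',\alpha,f$ and holds for every $|\eta|$ in a neighbourhood of zero. This requires controlling $|f(z+t\eta)/f(z)|$ uniformly in $|\eta|$ small, which can be arranged by combining Lemma~\ref{L:4} (applied on the slightly enlarged disk $|w|\le r+|\eta|$) with the Chiang--Ruijsenaars uniform estimate from~\cite{Chiang-Ruijsenaars_2006}, thereby producing an integrable-in-$\theta$ majorant on $|z|=r$. Once the uniform-in-$\eta$ bound is established, the passage to the limit on the LHS is immediate and the displayed estimate follows.
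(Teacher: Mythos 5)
Your plan is a genuinely different route from the paper's (the paper factors out the zeros and poles through the auxiliary zero/pole-free function $F(z)=f(z)\prod(z-b_v)/\prod(z-a_u)$, splits $\frac{1}{\eta}(f(z+\eta)/f(z)-1)$ into an $F$-part and a rational-product part, and uses an explicit L'H\^opital calculation on the latter), but the route you propose has a real gap at the step you describe most lightly: ``raising this bound to the power $\alpha$ via Lemma~\ref{L:1} and averaging in $\theta$.'' Starting from the segment identity
\[
\frac{1}{\eta}\Big(\frac{f(z+\eta)}{f(z)}-1\Big)=\int_0^1\frac{f'(z+t\eta)}{f(z+t\eta)}\,\frac{f(z+t\eta)}{f(z)}\,dt,
\]
you obtain a pointwise bound that is an \emph{integral over} $t\in[0,1]$ of terms of the form $\sum_u\frac{1}{|re^{i\theta}+t\eta-a_u|}$. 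You then need to control $\frac{1}{2\pi}\int_0^{2\pi}\bigl(\int_0^1|\cdots|\,dt\bigr)^{\alpha}\,d\theta$. The $t$-integral and the $\alpha$-th power do not commute in the direction you want: for $0<\alpha<1$, Jensen or concavity give $\bigl(\int_0^1 g\,dt\bigr)^{\alpha}\ge\int_0^1 g^{\alpha}\,dt$, the \emph{wrong} direction, so you cannot simply pull the $\alpha$ inside and then invoke Lemma~\ref{L:3} on each $\frac{1}{2\pi}\int_0^{2\pi}|re^{i\theta}+t\eta-a_u|^{-\alpha}\,d\theta$. Replacing the $t$-average by $\sup_{0\le t\le 1}$ would give a majorant $\bigl(\mathrm{dist}([re^{i\theta},re^{i\theta}+\eta],\,a_u)\bigr)^{-1}$ which is not of the form handled by Lemma~\ref{L:3} and whose $\theta$-integral is not controlled uniformly in $\eta$. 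Working without any $\alpha$-th power (so that Fubini would let you take the $\theta$-integral first, then integrate in $t$) is also blocked, because $\frac{1}{2\pi}\int_0^{2\pi}\frac{d\theta}{|re^{i\theta}-w|}$ diverges when $|w|=r$ and $w$ may fall anywhere in the annulus as $t$ varies; Lemma~\ref{L:3} genuinely requires $\alpha<1$.

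This is precisely the technical obstacle the paper's $F$-decomposition is designed to sidestep: after writing $f=F\cdot\prod(z-a_u)/\prod(z-b_v)$, the $1/\eta$-difference splits into a piece $\frac{1}{\eta}(F(z+\eta)/F(z)-1)$, controlled by the zero/pole-free Case\,1 with a single $+\log 2$ overhead, plus a piece involving the rational factors, to which L'H\^opital gives the \emph{pointwise, $t$-free} bound $\sum_u |z-a_u|^{-1}+\sum_v |z-b_v|^{-1}+1$ for $|\eta|$ small. Both pieces are therefore of a form directly amenable to Lemmas~\ref{L:1}, \ref{L:2} and~\ref{L:3}, with no interchange problem. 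A further small mismatch: in your version you would apply Lemma~\ref{L:4} directly to $f'/f$ (coefficient $2$ on the singular sums) and bound $|f(z+t\eta)/f(z)|\le 2$ by a multiplicative $\log 2$, so your decomposition produces one $\frac{1}{\alpha}\log(\cdots)$ term plus $\log 2$, not the two $\frac{1}{\alpha}\log$-terms plus $2\log 2$ displayed in~(\ref{E:reformulation-logarithmic derivative}); that discrepancy is harmless in principle (a stronger bound would imply the stated one), but it signals that your plan is not following the paper's bookkeeping either. To make your approach rigorous you would need a genuine maximal-function or $L^\alpha$-Fubini substitute for the $t$-average; without it, the step from the pointwise segment-bound to the $\theta$-average is not justified.
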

~\\

\begin{proof}
We distinguish two cases.
~\\
~\\
{\noindent\bf Case 1.} If $f(z)$ does not have zeros and poles in
$\overline{D}(0,\, R)$, then $f(z)$ is analytic on $D(0,R):=\{z:|z|<R\}$.
Thus, for all $z$ satisfying $|z|=r<R$, we can choose $|\eta|(>0)$ sufficiently small such that $z+\eta \in D(0,R)$ and
\begin{equation*}
f(z+\eta)-f(z)=\eta f'(z)+o(\eta),
\end{equation*}
as $\eta\to 0$.
So
\begin{equation*}
\frac{1}{\eta}\Big(\frac{f(z+\eta)}{f(z)}-1\Big)=
\frac{f'(z)}{f(z)}+\frac{o(\eta)}{\eta}\frac{1}{f(z)}.
\end{equation*}
~\\
Since $f(z)$ does not have zeros in $\overline{D}(0,R)$,
then $1/f(z)$ is analytic on
$D(0,R)$ and continuous on $\overline{D}(0,R)$.
By the Maximum Modulus Principle, there exists $M_{1}>0$ such that
$|1/f(z)|<M_{1}$ for all $z\in D(0,R)$.

We further choose
$|\eta|(>0)$ sufficiently small such that
$|\frac{o(\eta)}{\eta}\frac{1}{f(z)}|< \frac{1}{2}$ for all $z\in D(0,R)$.
Hence, when $|\eta|(>0)$ is sufficiently small, we have
\begin{equation}
\label{E:reformulation-logarithmic derivative-0}
\begin{split}
m_{\eta}\Big(r,\frac{1}{\eta}\Big(\frac{f(z+\eta)}{f(z)}-1\Big)\Big)&\leq m\Big(r,
\frac{f'(z)}{f(z)}\Big)+m\Big(r,\frac{o(\eta)}{\eta}\frac{1}{f(z)}\Big)+\log
2\\
&\leq  m\Big(r,
\frac{f'(z)}{f(z)}\Big)+\log 2.
\end{split}
\end{equation}
~\\
{\noindent\bf Case 2.} If $f(z)$ has zeros and poles in
$\overline{D}(0,R)$, then we define
\begin{equation}
\label{E:reformulation-logarithmic derivative-1}
F(z)=f(z)
\frac{\prod_{v=1}^{N}(z-b_{v})}{\prod_{u=1}^{M}(z-a_{u})},
\end{equation}
where $a_{u}(u=1,2,\cdots,M)$ and $b_{v}(v=1,2,\cdots,N)$ are the
zeros and poles of $f(z)$ on $\overline{D}(0,R)$ respectively. Thus, $F(z)$ is free of poles and zeros on $\overline{D}(0,R)$.

For all $z$ satisfying $|z|=r<R$, we can choose $|\eta|(>0)$ sufficiently small such that $z+\eta \in D(0,R)$. We deduce
\begin{equation}
\label{E:reformulation-logarithmic derivative-1'}
\begin{split}&\ \ \ \ \frac{1}{\eta}\Big(\frac{f(z+\eta)}{f(z)}-1\Big)\\
&=\frac{1}{\eta}\Big(\frac{F(z+\eta)}{F(z)}-1\Big)
+\frac{F(z+\eta)}{F(z)}\cdot\frac{1}{\eta}\Big(\prod_{u=1}^{M}\frac{z+\eta-a_{u}}{z-a_{u}}\prod_{v=1}^{N}\frac{z-b_{v}}{z+\eta-b_{v}}-1\Big).
\end{split}
\end{equation}
~\\
From (\ref{E:reformulation-logarithmic derivative-1}), we have
\begin{equation}
\label{E:reformulation-logarithmic derivative-2}
\log F(z)=\log f(z)+\sum_{v=1}^{N}\log (z-b_{v})-\sum_{u=1}^{M}\log (z-a_{u})+2k\pi i,
\end{equation}
for some $k\in \mathbb{Z}$. Taking logarithmic derivatives on both sides of (\ref{E:reformulation-logarithmic derivative-1}) and an application of Lemma \ref{L:4} allow us to deduce
~\\
\begin{equation}
\label{E:reformulation-logarithmic derivative-3}
\begin{split}
&\left|\frac{F'(z)}{F(z)}\right|\leq \left|\frac{f'(z)}{f(z)}\right|+\sum_{v=1}^{N}\frac{1}{|z-b_{v}|}+\sum_{u=1}^{M}\frac{1}{|z-a_{u}|}\\
&\leq \frac{8R}{(R-r)^{2}}\Big(T(R,f)+T\Big(R,\frac{1}{f}\Big)\Big)+\sum_{|a_{u}|< R}\frac{2}{|z-a_{u}|}+\sum_{|b_{v}|<R}\frac{2}{|z-b_{v}|}\\
&\qquad +\sum_{v=1}^{N}\frac{1}{|z-b_{v}|}+\sum_{u=1}^{M}\frac{1}{|z-a_{u}|}\\
&\leq\frac{8R}{(R-r)^{2}}\Big(T(R,f)+T\Big(R,\frac{1}{f}\Big)\Big)+\sum_{v=1}^{N}\frac{3}{|z-b_{v}|}+\sum_{u=1}^{M}\frac{3}{|z-a_{u}|}.
\end{split}
\end{equation}
~\\

Taking $\log^+$ of both sides of  (\ref{E:reformulation-logarithmic derivative-3}) and applying Lemma \ref{L:1},  with $0<\alpha<1$ yields
\begin{equation}\label{E:logF}
\begin{split}
&\ \ \ \ \log^{+}\left|\frac{F'(z)}{F(z)}\right|\leq \frac{1}{\alpha}\log \left(1+\left|\frac{F'(z)}{F(z)}\right|\right)^{\alpha}\\
&\leq \frac{1}{\alpha}\log \left(1+\frac{8R}{(R-r)^{2}}\Big(T(R,f)+T\Big(R,\frac{1}{f}\Big)\Big)+\sum_{v=1}^{N}\frac{3}{|z-b_{v}|}+\sum_{u=1}^{M}\frac{3}{|z-a_{u}|}\right)^{\alpha}\\
&\leq \frac{1}{\alpha}\log \left(1+\frac{8R^{\alpha}}{(R-r)^{2\alpha}}\Big(T^{\alpha}(R,f)+T^{\alpha}\Big(R,\frac{1}{f}\Big)\Big)+\sum_{v=1}^{N}\frac{3}{|z-b_{v}|^{\alpha}}+\sum_{u=1}^{M}\frac{3}{|z-a_{u}|^{\alpha}}\right).
\end{split}
\end{equation}
~\\
A straighforward application of Lemma \ref{L:2} and Lemma \ref{L:3} to \eqref{E:logF} give the estimate

\begin{equation}
\label{E:reformulation-logarithmic derivative-4}
m\Big(r,\frac{F'(z)}{F(z)}\Big) \leq \frac{1}{\alpha}\log \bigg(1+\frac{8R^{\alpha}}{(R-r)^{2\alpha}}\Big(T^{\alpha}(R,f)+T^{\alpha}\Big(R,\frac{1}{f}\Big)\Big)+\frac{3}{(1-\alpha)r^{\alpha}}(M+N)\bigg)
\end{equation}
~\\
Note that $0<R<R'$, we have
\begin{equation}
\label{E:reformulation-logarithmic derivative-5}
n(R,f)\leq \frac{N(R',f)}{\log \frac{R'}{R}},\quad\mathrm{and}\quad
n\Big(R,\frac{1}{f}\Big)\leq
\frac{N\Big(R',\frac{1}{f}\Big)}{\log
\frac{R'}{R}}.
\end{equation}
~\\
Combining (\ref{E:reformulation-logarithmic derivative-4}), (\ref{E:reformulation-logarithmic derivative-5}), we deduce that
\begin{equation}
\label{E:reformulation-logarithmic derivative-7}
\begin{split}
m\Big(r,\frac{F'(z)}{F(z)}\Big)\leq &\frac{1}{\alpha}\log \left(1+\frac{8R^{\alpha}}{(R-r)^{2\alpha}}\left(T^{\alpha}(R,f)+T^{\alpha}\Big(R,\frac{1}{f}\Big)\right)
\right.\\ &\left.+\frac{3\left(N(R',f)+N\Big(R',\frac{1}{f}\Big)\right)}{(1-\alpha)r^{\alpha}\log
\frac{R'}{R}}\right).
\end{split}
\end{equation}
~\\
On the other hand, an application of L'Hospital Rule yields
\begin{equation*}
\lim\limits_{\eta\rightarrow
0}\frac{1}{\eta}\left(\prod_{u=1}^{M}\frac{z+\eta-a_{u}}{z-a_{u}}\prod_{v=1}^{N}\frac{z-b_{v}}{z+\eta-b_{v}}-1\right)
=\sum_{u=1}^{M}\frac{1}{z-a_{u}}-\sum_{v=1}^{N}\frac{1}{z-b_{v}},
\end{equation*}
~\\
so that
\begin{equation*}
	\lim\limits_{\eta\rightarrow
0}\left|\frac{1}{\eta}\cdot\prod_{u=1}^{M}\frac{z+\eta-a_{u}}{z-a_{u}}\prod_{v=1}^{N}\frac{z-b_{v}}{z+\eta-b_{v}}-1\right|< \sum_{u=1}^{M}\frac{1}{|z-a_{u}|}+\sum_{v=1}^{N}\frac{1}{|z-b_{v}|}+1.
\end{equation*}
~\\
Hence, there exists $h>0$ such that
\begin{equation}\label{E:hospital-estimate}
\left|\frac{1}{\eta}\cdot\prod_{u=1}^{M}\frac{z+\eta-a_{u}}{z-a_{u}}\prod_{v=1}^{N}\frac{z-b_{v}}{z+\eta-b_{v}}-1\right|<
\sum_{u=1}^{M}\frac{1}{|z-a_{u}|}+\sum_{v=1}^{N}\frac{1}{|z-b_{v}|}+1
\end{equation}
whenever $0<|\eta|<h$. We apply Lemma \ref{L:1} to \eqref{E:hospital-estimate} deduce
\begin{equation}\label{E:reformulation-logarithmic derivative-6}
\begin{split}
&\log^{+}\left|\frac{1}{\eta}\cdot\prod_{u=1}^{M}\frac{z+\eta-a_{u}}{z-a_{u}}\prod_{v=1}^{N}\frac{z-b_{v}}{z+\eta-b_{v}}-1\right|
\leq \frac{1}{\alpha}\log \left(1+\left|\frac{1}{\eta}\cdot\prod_{u=1}^{M}\frac{z+\eta-a_{u}}{z-a_{u}}\prod_{v=1}^{N}\frac{z-b_{v}}{z+\eta-b_{v}}-1\right|\right)^{\alpha}\\
&\leq\frac{1}{\alpha}\log \left(2+\sum_{u=1}^{M}\frac{1}{|z-a_{u}|}+\sum_{v=1}^{N}\frac{1}{|z-b_{v}|}\right)^{\alpha}\leq \frac{1}{\alpha}\log \left(2^{\alpha}+\sum_{u=1}^{M}\frac{1}{|z-a_{u}|^{\alpha}}+\sum_{v=1}^{N}\frac{1}{|z-b_{v}|^{\alpha}}\right).
\end{split}
\end{equation}
~\\
Applying again Lemma \ref{L:2}, Lemma \ref{L:3}, (\ref{E:reformulation-logarithmic derivative-5}) to (\ref{E:reformulation-logarithmic derivative-6}) enable us to deduce
\begin{equation}
\label{E:reformulation-logarithmic derivative-8}
\begin{split}
&\ \ \ \ m_{\eta}\left(r,\frac{1}{\eta}\cdot\prod_{u=1}^{M}\frac{z+\eta-a_{u}}{z-a_{u}}\prod_{v=1}^{N}\frac{z-b_{v}}{z+\eta-b_{v}}-1\right)\\
	&\leq \frac{1}{\alpha}\log \left(2^{\alpha}
+\sum_{u=1}^{M}\frac{1}{2\pi}\int_{0}^{2\pi}\frac{1}{|re^{i\theta}-a_{u}|^{\alpha}}\ d\theta+\sum_{v=1}^{N}\frac{1}{2\pi}\int_{0}^{2\pi}\frac{1}{|re^{i\theta}-b_{v}|^{\alpha}}\ d\theta\right)\\
&\leq \frac{1}{\alpha}\log \left(2^{\alpha}+\frac{1}{(1-\alpha)r^{\alpha}}(M+N)\right)= \frac{1}{\alpha}\log \left(2^{\alpha}+\frac{1}{(1-\alpha)r^{\alpha}}\Big(n(R,f)+n\Big(R,\frac{1}{f}\Big)\Big)\right)\\
&\leq \frac{1}{\alpha}\log \bigg(2^{\alpha}
+\frac{N(R',f)+N\Big(R',\frac{1}{f}\Big)}{(1-\alpha)r^{\alpha}\log
\frac{R'}{R}}\bigg).
\end{split}
\end{equation}
~\\
Since $F(z)$ is free of zeros and poles in $D(0,\, R)$, so we can apply the Case 1 of Theorem \ref{T:limit-discrete-quotient} which asserts that
\begin{equation*}
\lim\limits_{\eta\rightarrow
0}m_{\eta}\Big(r,\frac{F(z+\eta)}{F(z)}\Big)=0.
\end{equation*}
~\\
In particular, we can apply the case 1 with (\ref{E:reformulation-logarithmic derivative-0})
above to $F(z)$ so that
	\begin{equation}
	\label{E:reformulation-logarithmic derivative-9}
		m\Big(r, \frac{1}{\eta}\Big(\frac{F(z+\eta)}{F(z)}-1\Big)\Big)\le
		m\Big(r,\, \frac{F^\prime(z)}{F(z)}\Big)+\log 2
	\end{equation}
when $|\eta|$ is sufficiently small.

Then, the inequality (\ref{E:reformulation-logarithmic derivative}) follows from
(\ref{E:reformulation-logarithmic derivative-1'}), (\ref{E:reformulation-logarithmic derivative-7}), (\ref{E:reformulation-logarithmic derivative-8}) and (\ref{E:reformulation-logarithmic derivative-9}).
\end{proof}
~\\

In particular, we have the following corollary for finite order meromorphic functions.
~\\
\begin{corollary}
\label{C:2}
Let $f(z)$ be a meromorphic function of
finite order $\sigma$, then
\begin{equation}
\lim\limits_{r\rightarrow
\infty}\lim\limits_{\eta\rightarrow
0}m_{\eta}\Big(r,\frac{1}{\eta}\Big(\frac{f(z+\eta)}{f(z)}-1\Big)\Big)=O(\log r), \ \ when \ \ \ \sigma \geq 1;
\end{equation}
\begin{equation}
\lim\limits_{r\rightarrow
\infty}\lim\limits_{\eta\rightarrow
0}m_{\eta}\Big(r,\frac{1}{\eta}\Big(\frac{f(z+\eta)}{f(z)}-1\Big)\Big)=O(1), \ \ when \ \ \ \sigma < 1.
\end{equation}
\end{corollary}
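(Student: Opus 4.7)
The plan is to invoke Theorem \ref{T:reformulation-logarithmic derivative} with judiciously chosen parameters $R$, $R'$, $\alpha$, and then feed in the growth bounds available for a meromorphic function of finite order $\sigma$: namely $T(r,f)=O(r^{\sigma+\varepsilon})$, $N(r,f)=O(r^{\sigma+\varepsilon})$, and likewise for $1/f$, valid for every $\varepsilon>0$.

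First, I would take the inner limit $\eta\to 0$ in \eqref{E:reformulation-logarithmic derivative}. Next, I would pick $R=2r$ and $R'=3r$. These choices give $R-r=r$, make $\log(R'/R)=\log(3/2)$ a positive constant independent of $r$, and yield $R^{\alpha}/(R-r)^{2\alpha}=2^{\alpha}r^{-\alpha}$. Substituting the finite-order estimates then produces
\[
\frac{8R^{\alpha}}{(R-r)^{2\alpha}}\bigl(T^{\alpha}(R,f)+T^{\alpha}(R,1/f)\bigr)=O(r^{\alpha(\sigma-1+\varepsilon)})
\]
together with
\[
\frac{3\bigl(N(R',f)+N(R',1/f)\bigr)}{(1-\alpha)r^{\alpha}\log(R'/R)}=O(r^{\sigma+\varepsilon-\alpha}).
\]

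Third, I would split into the two cases of the corollary. For $\sigma\ge 1$, fix any $\alpha\in(0,1)$ (say $\alpha=1/2$). The arguments of the two logarithms appearing on the right-hand side of \eqref{E:reformulation-logarithmic derivative} are then polynomially bounded in $r$, so each logarithm contributes $O(\log r)$, and dividing by $\alpha$ preserves this. For $0\le\sigma<1$, pick $\varepsilon>0$ small enough that $\sigma+2\varepsilon<1$, then choose $\alpha$ with $\sigma+\varepsilon<\alpha<1$. With this choice both exponents $\alpha(\sigma-1+\varepsilon)$ and $\sigma+\varepsilon-\alpha$ are strictly negative, so the two error terms inside the logarithms vanish as $r\to\infty$. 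Each logarithm then tends to a finite constant, giving the desired $O(1)$ bound.

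The main obstacle --- modest as it is --- lies in the book-keeping of selecting $\alpha$ in the sub-case $\sigma<1$ so that both polynomial-in-$r$ exponents are simultaneously negative; once Theorem \ref{T:reformulation-logarithmic derivative} is in hand, no further analytic subtlety is required, as the argument reduces to substituting finite-order growth bounds and optimising the parameter $\alpha$ case-by-case.
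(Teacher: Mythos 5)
Your proposal is correct and follows essentially the same route as the paper: invoke Theorem~\ref{T:reformulation-logarithmic derivative} with $R=2r$, $R'=3r$, substitute the finite-order bounds $T(r,f),N(r,f)=O(r^{\sigma+\varepsilon})$, and read off $O(\log r)$ or $O(1)$ according to whether the arguments of the logarithms grow polynomially or stay bounded. The only cosmetic difference is the choice of $\alpha$: the paper uses the single value $\alpha=1-\varepsilon/2$ for both cases, while you split into $\alpha=1/2$ for $\sigma\ge 1$ and $\alpha\in(\sigma+\varepsilon,1)$ for $\sigma<1$; both choices make the exponents work out, so the two arguments are equivalent.
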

~\\
\begin{proof}
Since $f(z)$ is a non-constant meromorphic function of finite order $\sigma$,
then given $\varepsilon$ satisfying $0<\varepsilon<2$, we have
\begin{equation*}
T(r,\, f)\leq r^{\sigma+\frac{\varepsilon}{2}}
\end{equation*}
when $r$ is sufficiently large.
We choose $\alpha=1-\frac{\varepsilon}{2}$, $R=2r$ and $R'=3r$ in
Theorem \ref{T:reformulation-logarithmic derivative}, then the above limits follow.
\end{proof}
\bigskip

\section{Concluding remarks}  This paper has established a way that allows one to recover the classical little Picard theorem for meromorphic functions from the corresponding little Picard theorem for difference operators. One way to consider the original little Picard theorem is that it is a consequence of the meromorphic function belonging to the kernel of a differential operator.  Our formulations of Nevanlinna theories enable us to see that the meromorphic functions belonging to the respectively kernels
of vanishing and infinite periods varying steps difference operators  must reduce to constants. This allows us to treat the classical results as limiting cases of the discrete analogues. As the discrete-continuous interplay  has always been a new source of inspiration (see e.g. \cite{Ablowitz}, \cite{Ramani}), the current paper offers a concrete approach to achieve this interplay between discrete and continuous operators by ways of Nevanlinna theory.

\subsection*{Acknowledegment} We would like to acknowledge useful comments from  our colleague Dr. T. K. Lam during the initial stage of this research. The authors would also like to thank the referees for their constructive comments that led to better presentation of the paper.

\bibliographystyle{amsplain}

\end{document}